\newtheorem{theo}{Theorem}[section]
\newtheorem{prop}[theo]{Proposition}
\newtheorem{lem}[theo]{Lemma}
\newtheorem{rema}[theo]{Remark}
\newtheorem{exam}[theo]{Example}
\theoremstyle{definition}
\newtheorem{defi}[theo]{Definition}
\newcommand\ds{ \displaystyle }
\newcommand\NN{\mathbb{N}}
\newcommand\RR{\mathbb{R}}
\newcommand\bA{{\mathbf A}}
\newcommand\bB{{\mathbf B}}
\newcommand\bE{{\mathbf E}}
\newcommand\bF{{\mathbf F}}
\newcommand\bU{{\mathbf U}}
\newcommand\bk{{\mathbf k}}
\newcommand\bn{{\mathbf n}}
\newcommand\bu{{\mathbf u}}
\newcommand\bv{{\mathbf v}}
\newcommand\bx{{\mathbf x}}
\newcommand\by{{\mathbf y}}
\newcommand{\T}{\mathcal{T}}
\newcommand{\E}[1][]{\mathcal{E}_{#1}}
\newcommand{\Ei}[1][]{\mathcal{E}_{\text{int}#1}}
\newcommand{\Ee}[1][]{\mathcal{E}_{\text{ext}#1}}
\newcommand{\X}[1][]{X_{#1}}
\newcommand{\R}{\mathbb{R}}
\newcommand{\F}[1][]{F_{#1,\sigma}}
\newcommand{\C}{\mathcal{C}}
\newcommand{\Pnt}{\mathcal{P}}
\newcommand{\D}{\mathcal{D}}
\newcommand{\dx}{\,\mathrm{d}\mathbf{x}}
\newcommand{\dd}{\,\mathrm{d}}
\newcommand{\finf}{f^\infty}
\title{A finite volume scheme for boundary-driven convection-diffusion equations with
  relative entropy structure}
\author{Francis FILBET}
\address{\flushleft
  Francis FILBET\\\vspace{.2cm}
  Institut de Mathématiques de Toulouse,\\
  Université Toulouse III \& Institut Universitaire de France,\\
  Bâtiment 1R3, 118, route de Narbonne\\
  F-31062, Toulouse cedex 9 France\\\vspace{.2cm}
  \texttt{E-mail: \href{mailto:francis.filbet@math.univ-toulouse.fr}{francis.filbet@math.univ-toulouse.fr}}}
\author{Maxime HERDA}
\address{\flushright
  Maxime HERDA\\\vspace{.2cm}
  Institut Camille Jordan,\\
  Université Claude Bernard Lyon 1, CNRS UMR 5208,\\
  43 blvd. du 11 novembre 1918\\\flushright
  F-69622, Villeurbanne cedex France\\\vspace{.2cm}
  \texttt{E-mail: \href{mailto:herda@math.univ-lyon1.fr}{herda@math.univ-lyon1.fr}}\\}
\begin{document}
    
    \begin{abstract}
      We propose a finite volume scheme for a class of nonlinear parabolic
      equations endowed with non-homogeneous Dirichlet boundary conditions and
      which admit relative entropy functionals. For this kind of models including 
      porous media equations, Fokker-Planck equations for plasma physics or
      dumbbell models for polymer flows, it has been proved that the transient solution converges
      to a steady-state when time goes to infinity. The present scheme is built from a discretization of the steady equation
      and preserves steady-states and natural
      Lyapunov functionals which provide a satisfying long-time behavior.
      After proving well-posedness, stability, exponential return to equilibrium and convergence,
      we present several numerical results which 
      confirm the accuracy  and underline the efficiency to preserve large-time asymptotic.

      \medskip
      \noindent \textsc{Keywords.} Finite volume methods, relative entropy, non-homogeneous Dirichlet boundary conditions, polymers, magnetized plasma, porous media
      
      \medskip
      \noindent \textsc{MSC2010 subject classifications.} 65M08, 65M12, 76S05, 76X05, 82D60
    \end{abstract}
    \maketitle
    \tableofcontents
    \section{Introduction}
    
    \subsection{General Setting}
    
    Let  $\Omega$ be a polyhedral open bounded connected subset of $\R^d$ with boundary $\Gamma = \partial\Omega$. 
    Let us introduce the steady advection field ${\bE}:\Omega\rightarrow\R^d$ and $\eta:\R\rightarrow\R$ a strictly increasing smooth function onto $\R$ satisfying $\eta(0)=0$.
    We consider the following nonlinear convection-diffusion equation with non-homogeneous Dirichlet boundary conditions
    \begin{equation}
      \left\lbrace
      \begin{array}{ll}
	\ds \frac{\partial f}{\partial t} \,+\, {\nabla}\cdot\left({\bE}(\bx)\,\eta(f)-{\nabla} \eta(f)\right) \,=\, 0&\text{ for }\ \bx\in\Omega, \  t\geq0,
	\\
	\,
	\\
	\ds f(t,\bx) = f^b(\bx) &\text{ for }\ \bx\in\Gamma, \  t\geq0,
	\\ 
	\,
	\\
	\ds f(0,\bx) = f^\text{in}(\bx) &\text{ for }\ \bx\in\Omega.
      \end{array}
      \right.
      \label{contprob}
    \end{equation}
    In \cite{bodineau_2014_lyapunov}, T. Bodineau, C. Villani, C. Mouhot and J. Lebowitz showed that this equation admits a large class of Lyapunov functionals,
    that we denote, using their denomination, relative $\phi$-entropies. 
    Each functional is generated by a convex function $\phi$ satisfying the following properties.
    \begin{defi}[Entropy generating functions]
      For any non-empty interval $J$ of $\R$ containing $1$, we say that $\phi\in\mathcal{C}^2(J,\R_+)$ is an \emph{entropy generating function} or simply
      \emph{entropy function} if it is strictly convex and satisfies $\phi(1)=0$ and $\phi'(1)=0$.
    \end{defi}
    The entropies are defined relatively to a steady state of \eqref{contprob}.
    Therefore, we assume that there exists $f^\infty$ which satisfies
    \begin{equation}
      \left\lbrace
      \begin{array}{ll}
	{\nabla}\cdot\left({\bE}\,\eta(f^\infty)-{\nabla} \eta(f^\infty)\right) = 0 &\text{in }\Omega,
	\\
	\,
	\\
	f^\infty = f^b &\text{on }\Gamma.
      \end{array}
      \right.
      \label{stateq}
    \end{equation}
    Now we can define the relative $\phi$-entropies and associated dissipations.
    \begin{defi}[Relative $\phi$-entropy and dissipation]
      For any entropy generating function $\phi$,
      we denote by $\mathcal{H}_\phi$ the \emph{relative $\phi$-entropy} defined by
      \[
      \mathcal{H}_\phi(t) = \int_\Omega\int_{f^\infty({\bx})}^{f(t,{\bx})}\phi'\left(\frac{\eta(s)}{\eta(f^\infty({\bx}))}\right)  \dd s \dx,
      \]
      and by $\mathcal{D}_\phi$ the relative $\phi$-entropy dissipation defined by
      \[
      \mathcal{D}_\phi(t) = \int_\Omega \left|{\nabla} h\right|^2\,\phi''(h)\,\eta(f^\infty) \dx,
      \]
      where $h$ is the ratio between the transient and stationary nonlinearities
      \begin{equation}
	h\ =\ 
      \left\lbrace
      \begin{array}{ll}
	\ds\frac{\eta(f)}{\eta(f^\infty)}&\text{in}\quad\Omega\,,\\
	\,\\
	\ds1 &\text{on}\quad\Gamma\,.
      \end{array}
      \right.
	\label{ratio}
      \end{equation}
      \label{relentcont}
    \end{defi}
    Let us note that for the linear problem, namely when $\eta$ is the identity function, the relative $\phi$-entropy rewrites 
    \[
    \mathcal{H}_\phi(t) = \int_\Omega\phi\left(\frac{f}{f^\infty}\right) f^\infty \dx.
    \]
    Typical examples of relative $\phi$-entropies are the \emph{physical relative entropy} and \emph{p-entropies}
    (or \emph{Tsallis relative entropies}) respectively generated, for $p\in(1,2]$, by
    \begin{equation}
      \left\{
      \begin{array}{lcl}
      \ds\phi_1(x)& =& x\ln(x) -(x-1)\,,\\
      \textstyle\,\\
      \ds\phi_p(x) &= &\ds\frac{x^p - px}{p-1} +1\,.
    \end{array}
    \right.
      \label{examples}
    \end{equation}
    
    One readily sees that, since $\eta$ and $\phi'$ are increasing functions satisfying $\eta(0) = 0$ and $\phi'(1) = 0$,
    the relative $\phi$-entropy is a non-negative quantity which cancels if and only if $f$ and $f^\infty$ coincide almost everywhere.
    The $\phi$-entropies are not, in general, distances between the solution and the steady state. However 
    Csiszar-Kullback type inequalities \cite{csiszar_1967_information, information_1959_kullback, generalized_2000_unterreiter} 
    yield a control of the $L^1$ distance between the solution and the equilibrium.
    Therefore if a relative $\phi$-entropy goes to zero when time goes to infinity,
    the solution converges to equilibrium in a strong sense.
    
    The following proposition was proved in \cite[Theorem 1.4]{bodineau_2014_lyapunov} and yields an entropy-entropy dissipation
    principle for Equation \eqref{contprob}. It starts from a reformulation of \eqref{contprob} using the new unknown \eqref{ratio}. It is easily derived using Leibniz product rule in Equation~\eqref{contprob} together with \eqref{stateq} and \eqref{ratio} and reads
    \begin{equation}
      \left\{
      \begin{array}{l}
	\ds \frac{\partial f}{\partial t} \,+\, {\nabla}\cdot\left(\bU^\infty\,h \,-\,  \eta(f^\infty)\nabla h\right) \,=\,0\,,\\
	\,\\
	\ds \bU^\infty\, =\, {\bE}\,\eta(f^\infty)- {\nabla} \eta(f^\infty)\,,\\
	\,\\
	\ds
	\nabla\cdot\bU^\infty\, =\, 0\,.
      \end{array}\right.
      \label{contprob2}
    \end{equation}
    \begin{prop}Any $L^\infty$ solution of \eqref{contprob} satisfies in the sense of distributions
      \begin{equation}
	\frac{\dd \mathcal{H}_\phi}{\dd t}= -\mathcal{D}_\phi\leq0,
	\label{inegcont}
      \end{equation}
      for any entropy generating function $\phi$.
      \label{theobodineau}
    \end{prop}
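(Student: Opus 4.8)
The plan is to differentiate $\mathcal{H}_\phi$ in time, substitute the conservative reformulation \eqref{contprob2} of the equation, integrate by parts twice, and exploit both the boundary condition $h\equiv 1$ on $\Gamma$ (together with $\phi'(1)=0$) and the divergence-free condition $\nabla\cdot\bU^\infty=0$.

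First I would differentiate under the integral sign. Writing $\mathcal{H}_\phi(t)=\int_\Omega G(\bx,f(t,\bx))\dx$ with $\partial_u G(\bx,u)=\phi'\!\left(\eta(u)/\eta(f^\infty(\bx))\right)$, the chain rule gives
\[
\frac{\dd \mathcal{H}_\phi}{\dd t}=\int_\Omega \phi'(h)\,\partial_t f\dx,
\]
where $h$ is the ratio \eqref{ratio}. Inserting $\partial_t f=-\nabla\cdot\bigl(\bU^\infty\,h-\eta(f^\infty)\nabla h\bigr)$ from \eqref{contprob2} and integrating by parts, the boundary term $\int_\Gamma \phi'(h)\,(\bU^\infty h-\eta(f^\infty)\nabla h)\cdot\bn\,\dd\sigma$ vanishes because $h\equiv 1$ on $\Gamma$ and $\phi'(1)=0$, leaving
\[
\frac{\dd \mathcal{H}_\phi}{\dd t}=\int_\Omega \phi''(h)\,\nabla h\cdot\bigl(\bU^\infty\,h-\eta(f^\infty)\nabla h\bigr)\dx
=\int_\Omega h\,\phi''(h)\,\nabla h\cdot\bU^\infty\dx-\mathcal{D}_\phi.
\]

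It then remains to show that the advective contribution vanishes, and this is the key algebraic step: introduce the antiderivative $\psi(r)=\int_1^r s\,\phi''(s)\dd s$, so that $\nabla(\psi(h))=h\,\phi''(h)\,\nabla h$, and write
\[
h\,\phi''(h)\,\nabla h\cdot\bU^\infty=\nabla\cdot\bigl(\psi(h)\,\bU^\infty\bigr)-\psi(h)\,\nabla\cdot\bU^\infty=\nabla\cdot\bigl(\psi(h)\,\bU^\infty\bigr)
\]
thanks to $\nabla\cdot\bU^\infty=0$ from \eqref{contprob2}. Integrating over $\Omega$ and using once more that $h\equiv 1$ on $\Gamma$, hence $\psi(h)=\psi(1)=0$, the divergence theorem kills this term, which yields $\dd\mathcal{H}_\phi/\dd t=-\mathcal{D}_\phi$. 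Finally $\mathcal{D}_\phi\geq 0$ since $\phi$ is strictly convex ($\phi''>0$) and $\eta(f^\infty)\geq 0$ (the steady state being nonnegative, with $\eta$ increasing and $\eta(0)=0$), giving \eqref{inegcont}.

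The main obstacle is not this formal computation but making it rigorous for merely $L^\infty$ solutions, which is why the identity is asserted only in the sense of distributions: the time derivative, the two integrations by parts, and the chain-rule manipulations of $\phi'(h)$ and $\psi(h)$ must be justified from the regularity available for $f^\infty$ and the weak (test-function) formulation of \eqref{contprob}, or via a density/regularization argument. One should also verify that $h$ and $\nabla h$ lie in the function spaces making $\mathcal{D}_\phi$ meaningful (finite, or allowed to be $+\infty$) and that $G$ and $\psi$ are well defined on the range of $f$ and $h$ respectively; these points require the boundedness hypotheses but introduce no new idea.
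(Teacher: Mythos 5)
Your proposal is correct and follows essentially the same route as the paper: testing the reformulated equation \eqref{contprob2} against $\phi'(h)$, cancelling boundary terms via $h\equiv 1$ on $\Gamma$ and $\phi'(1)=0$, and eliminating the advective term by recognizing $h\,\phi''(h)\,\nabla h$ as a gradient and using $\nabla\cdot\bU^\infty=0$. Your antiderivative $\psi(r)=\int_1^r s\,\phi''(s)\,\dd s$ is exactly the paper's $\varphi(s)=s\phi'(s)-\phi(s)$, so the key algebraic step coincides.
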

    The formal computations leading to \eqref{inegcont} motivate our choices in the elaboration of the discrete scheme.
    Therefore, we recall the proof yielding the entropy equality.
    \begin{proof}
      First, we integrate \eqref{contprob2} against $\phi'(h)$, integrate by parts and use the boundary conditions and the fact that $\phi'(1)=0$ to get
      \begin{eqnarray*}
	\frac{\dd \mathcal{H}_\phi}{\dd t} &=&
	\int_\Omega\phi'\left(h\right){\nabla}\cdot\left( - \bU^\infty\,h\, +\, \eta(f^\infty)\,{\nabla} h
	\right)\,\dx
	\\
	&=& \int_\Omega\bU^\infty\cdot{\nabla}h\,\phi''\left(h\right)\,h\,\dx\, -\, 
	\int_\Omega  \left|{\nabla} h\right|^2\,\phi''(h)\,\eta(f^\infty)\,\dx.
      \end{eqnarray*}
      Let $\varphi:s\mapsto  s\phi'(s) - \phi(s)$. It satisfies $\varphi'(s) = \phi''\left(s\right)s$ and $\varphi(1) = 0$. Hence, substituting it in the last expression yields
      \[
      \frac{\dd \mathcal{H}_\phi}{\dd t}\ =\ \int_\Omega\bU^\infty\cdot{\nabla}\varphi(h)\,\dx\, -\, \mathcal{D}_\phi\ =\  - \mathcal{D}_\phi,
      \]
      where we integrated the first term by parts, used the stationary equation \eqref{stateq} and the boundary conditions.
    \end{proof}
    
    There are two important facts that justify the use of \eqref{contprob2} instead of \eqref{contprob} to derive the above 
    entropy dissipation inequality.
    The rewriting transforms the advection field $\bE$ on $\eta(f)$
    into the incompressible field $\bU^\infty$ on $h$ so that the contribution of the convection can vanish when
    the time derivative of the relative entropy  is computed.
    The underlying cancellations stems from the transformation of  ${\nabla}h \,\phi''\left(h\right)h$ into a gradient
    thanks to $\varphi$ and on the fact that $f^\infty$ solves \eqref{stateq}.
    The second reason is that considering the equation on $h$ instead of 
    $f$ changes non-homogeneous Dirichlet boundary conditions into homogeneous ones on $h-1$.
    Together with properties of $\phi$, it enables cancellations of boundary terms. 
    In other words, relative $\phi$-entropies are the correct functionals and \eqref{contprob2} the right form of the equation to capture the boundary-driven dynamics.
    
    \smallskip
    
    The purpose of this work  is the design and analysis of a finite volume scheme preserving the whole class of relative $\phi$-entropy dissipation inequalities
    \eqref{inegcont}. This is done by discretizing the reformulated equation \eqref{contprob2} in a way that enables the counterpart of computations of the proof of 
    Proposition \ref{theobodineau} to hold at the discrete level. Because of the reformulation involving the steady state, the scheme is based on a preliminary discretization of the steady state and flux.
    
    \smallskip
    
    Let us emphasize that $\bE$ is a general field and need not to be
    either incompressible nor irrotational as for parabolic equations
    with a gradient flow structure \cite{peletier_2014_variational}.
    Indeed, assuming some regularity on the advection field, one can apply the Hodge
    decomposition to get the existence of a potential
    $V:\Omega\rightarrow\R$ and $\bF:\Omega\rightarrow\R^d$ such that
    \begin{equation}
      \bE\ =\ -\nabla V + \bF, \qquad\nabla\cdot\bF\ =\ 0
      \label{hodge}
    \end{equation}
    When $\bF=\mathbf{0}$, there are many examples  in the literature
    \cite{FFCWS, chainais_2007_asymptotic, bessemoulin_2012_finite,bessemoulin_2012_finite_filbet,
      cances_2015_numerical, burger_2010_mixed, CHJS} of finite volume
    schemes preserving entropy dissipation properties. C. Chainais-Hillairet and
    F. Filbet studied in \cite{chainais_2007_asymptotic} a finite volume
    discretization for nonlinear drift-diffusion system and proved that
    the numerical solution converges to a steady-state when time goes to
    infinity. In \cite{burger_2010_mixed}, M. Burger, J. A. Carrillo and
    M. T. Wolfram proposed a mixed finite element method for nonlinear
    diffusion equations and proved convergence towards the steady-state in
    case of a nonlinear Fokker-Planck equation with uniformly convex
    potential. All these schemes exploit the gradient flow structure of
    the equation, which gives a natural entropy. 
    
    In the non-symmetric case $\bF\neq\mathbf{0}$, the gradient
    structure cannot be exploited anymore, but as Proposition \ref{theobodineau} shows, there
    is still a relative entropy structure, which may be investigated to
    prove convergence to a steady state. The relative entropy properties of Fokker-Planck type equations in the whole space are
    exhaustively studied in the famous paper \cite{convex_2000_arnold} of A. Arnold, P. Markowich, G. Toscani, A. Unterreiter 
    and specific properties of the non-symmetric equations have been investigated in \cite{arnold_2008_large, achleitner_2015_large}.

    In bounded domains, entropy properties are often used in the context of
    no-flux boundary conditions or in the whole space, but few
    results concern Dirichlet boundary conditions. 
    In \cite{bessemoulin_2012_finite}, M. Bessemoulin-Chatard proposed an extension of the 
    Scharfetter-Gummel for finite volume scheme for convection-diffusion equations with nonlinear
    diffusion and non-homogeneous and unsteady Dirichlet boundary conditions.
    While in the latter work the author presents a scheme with a satisfying long-time behavior for a 
    similar class of models than those of the present paper, our strategy and objectives differ. Here we aim at 
    preserving a whole class of relative entropies and build our scheme for the transient problem from a discretization of the stationary 
    equation.
    
    Let us precise that we can generalize our approach to the more general boundary conditions 
    \[
    \left\{
    \begin{aligned}
      &f = f^b \,\text{on }\,\Gamma_D,\\
      &\left[{\bE}\,\eta(f)-{\nabla} \eta(f)\right]\cdot\bn(\bx) = 0 \quad\text{on}\quad\Gamma_N,
    \end{aligned}
    \right.
    \] 
    with $\Gamma = \Gamma_D\cup\Gamma_N$. 
    Our results hold in this setting with minor modifications but to avoid unnecessary technicalities in the notation
    and in the analysis we consider non-homogeneous Dirichlet conditions on the whole boundary. 
    However, numerical results will be shown in both cases.

    \subsection{Physical models}
    
    Before describing our numerical scheme, let us present some physical
    models described by equation \eqref{contprob} for which the large-time asymptotic
    has been studied using entropy-entropy dissipation arguments. 
    Some of these models are the homogeneous part of kinetic Fokker-Planck-type equations and
    this work constitutes a first step towards treating full kinetic models. In future work, we aim at adapting the strategy developed here
    to ensure the property of convergence to local equilibrium for the solutions of these equations.
    
    \subsubsection{\bf The Fokker-Planck equation with magnetic field}
    
    A classical model of plasma physics describing the dynamic of charged particles
    evolving in an external electromagnetic field $(-\nabla_\bx\phi,\, \bB)$
    is given by the Vlasov-Fokker-Planck equation reading
    \begin{equation}
      \dfrac{\partial F}{\partial t} + \bv\cdot\nabla_\bx F - \nabla_\bx \phi\cdot\nabla_\bv F + (\bv\wedge \bB)\cdot\nabla_\bv F =
      \nabla_\bv \cdot(\bv F + \nabla_\bv F).
      \label{VFPf}
    \end{equation}
    For more details on the model we refer to \cite{ bouchut_1995_on, herda_2015_on}.
    In \cite{bouchut_1995_on}, Bouchut and Dolbeault proved that the solution of \eqref{VFPf}
    in the whole phase space and without magnetic field converges to a global equilibrium.
    Their proof mainly relies on the decrease of the free energy functional, 
    which corresponds to the physical relative entropy introduced in \eqref{examples}. 
    The external magnetic field does not alter the relative entropy inequality.
    We refer to \cite{herda_2015_on} for the corresponding computations.
    Here, we consider the phenomena happening in the velocity space which results in a
    Fokker-Planck equation with magnetic field, namely equation \eqref{contprob} with $\eta(s)=s$ and 
    an advection field given by 
    \begin{equation}
      {\bE}(\bv) = -\bv +\bv\wedge \bB\,,
      \label{advecFP}
    \end{equation}
    with constant magnetic field $\bB$. 
    In applications, the velocity variable $\bv$ usually lives in $\R^3$. 
    However when performing numerical simulations, one needs to restrict the velocity domain to a bounded set $\Omega$.
    On the edge of this restricted domain $\Omega$, we shall consider the following non-homogeneous Dirichlet boundary conditions
    \begin{equation}
      f(t,\bv) = f^\infty(\bv)\quad \forall\bv\in\partial\Omega,
      \label{FPfbound}
    \end{equation}
    where $f^\infty$ is the local Maxwellian associated with \eqref{VFPf} which writes 
    \begin{equation}
      f^\infty(\bv) = \frac{1}{(2\pi)^{3/2}}e^{- \frac{|\bv|^2}{2}}\quad \forall\bv\in\Omega,
    \end{equation}
    and is a stationary state of \eqref{contprob} with \eqref{advecFP}-\eqref{FPfbound}. 
    With the boundary conditions \eqref{FPfbound}, one recovers the same stationary state as in the whole space while working in a bounded domain.
    As for the more complicated kinetic model \eqref{VFPf}, free energy (relative $\phi$-entropy) decrease holds.
    
    Our approach is particularly promising for this kind of problem when
    the solution develops some micro-instabilities around a steady
    state. In this situation, it is important that numerical artefacts do
    not generate spurious oscillations.

    \subsubsection{\bf The dumbbell model for the density of polymers in a dilute solution}
    
    The following kinetic equation describes the evolution of the density $F\equiv F(t,\bx,\bk)$ of polymers at time $t$ and position $\bx$ diluted in a fluid flow of velocity $\bu(\bx)$ from a mesoscopic point of view
    \begin{equation}
      \dfrac{\partial F}{\partial t} + \bu\cdot\nabla_\bx F =
      -\nabla_\bk\cdot\left[\left(\nabla_\bx \bu\,\bk -
      \frac{1}{2}\nabla_\bk\Pi(\bk)\right) F -  \frac{1}{2}\nabla_\bk F\right].
      \label{dumbbell}
    \end{equation}
    The polymers are pictured as two beads linked by a spring and the variable $\bk$ stands for the vector indicating the length and orientation of the molecules. The potential $\Pi$ is given by 
    $\Pi(\bk)= |\bk|^2/2$ in the case of Hookean dumbbells or by $\Pi(\bk)= -\ln(1-|\bk|^2)/2$ in the case of Finite Extensible Nonlinear Elastic dumbbells. 
    In the complete model, the velocity of the fluid $\bu$ follows an incompressible Navier-Stokes equation featuring an additional force term modeling for the contribution of the polymers on the dynamic of the fluid which results in a nonlinear kinetic-fluid coupling. 
    Here, we consider the simpler case where $u(\bx)$ is a given incompressible field. For more details on the modeling, we refer to \cite{jourdain_2006_long} and references therein.  We also refer to the paper \cite{masmoudi_2008_well} of Masmoudi
    that treats the well-posedness and provides additional information on
    the model. 
    
    Once again we aim at approximating numerically the ``velocity'' part of the kinetic equation \eqref{dumbbell} which rewrites as \eqref{contprob} 
    with $\eta(s)=s$ and 
    an advection field given by
    \begin{equation}
      {\bE}(\bk) = \bA\,\bk - \frac{1}{2}\nabla_\bk\Pi(\bk),
      \label{advecPoly}
    \end{equation}
    with constant matrix $\bA$ satisfying $\mathrm{tr}(\bA) = 0$. This matrix is the gradient of an incompressible velocity field at some space location.
    Natural boundary conditions for this model are given by null outward flux.
    In \cite{jourdain_2006_long}, the long-time behavior of \eqref{dumbbell}
    and of the latter reduced model
    are investigated using relative $\phi$-entropies 
    with $\phi$ given by \eqref{examples}.
    
    \subsubsection{\bf A nonlinear model, the porous medium equation}
    
    The porous medium equation is a nonlinear PDE writing
    \begin{equation}
      \dfrac{\partial f}{\partial t} = \Delta f^m,
      \label{PME}
    \end{equation}
    with $m>1$. It can model many physical applications and generally describes processes involving fluid flow, heat transfer or diffusion.
    The typical example is the description of the flow of an isentropic gas through a porous
    medium. There is a huge literature on this equation and we refer to the book of V\'asquez
    \cite{vasquez_2007_porous} for the detailed mathematical theory.
    
    Here, equation \eqref{PME} is set in a bounded domain $\Omega$ with non-homogeneous Dirichlet boundary conditions
    \[
    f(t,\bx) = f^b(\bx)>0\quad \forall\bx\in\partial\Omega,
    \]
    such that it might be recast like \eqref{contprob} with a null advection field and with $\eta(s) = s^m$.
    Using their relative $\phi$-entropy method, Bodineau, Mouhot, Villani and Lebowitz show exponential convergence
    to equilibrium for this nonlinear equation.

    \bigskip

    \subsection{Outline and main results}
    The plan of the paper is as follows.
    In Section~\ref{presscheme}, we present the finite volume scheme
    and the discrete version of the relative $\phi$-entropies.
    Then, in Section~\ref{propflux}, we prove the main properties of our discrete flux , namely the preservation of steady states in Lemma~\ref{preserve} and the non-negativity of discrete $\phi$  entropy dissipations in Proposition~\ref{relentprop}. 
    In Section~\ref{fullydiscrete}, we analyze our fully-discrete schemes. In Theorem~\ref{mainimp}, we prove well-posedness, stability and decay of discrete $\phi$-entropies for the time implicit version  of our scheme. For the explicit version, we prove the same results in Theorem~\ref{mainexp}, under a parabolic Courant-Friedrichs-Lewy (CFL) condition. Besides, the long-time behavior of the discrete solution, for both schemes, is investigated in this section and we prove in Theorem~\ref{longtimetheo} that discrete solutions return to equilibrium exponentially fast, with  a rate that does not depend on the size of the discretization.
    In Section~\ref{convergence}, we show convergence of the discrete solution to a solution of \eqref{contprob2}, when the size of the discretization goes to zero. 
    Finally in Section~\ref{numerics}, we end by providing numerical illustrations 
    of the properties of our schemes on the models presented above.

    \section{Presentation of the numerical schemes}
    \label{presscheme}
    In this section, we introduce our finite volume schemes. In the following, $T$ is a positive real number and $\Omega_T$ denotes  the cylinder $[0,T)\times\Omega$.
    We start with some notations associated with the discretization of $\Omega_T$.
    
    \subsection{Mesh and time discretization}

    An admissible mesh of $\Omega$ is defined by the triplet $(\T, \E, \Pnt)$. The set $\T$ is a finite family of nonempty connected open disjoint subsets $K\subset\Omega$ called control volumes or cells. 
    The closure of the union of all control volumes is equal to $\bar{\Omega}$.
    The set $\E$ is a finite family of nonempty subsets $\bar{\Omega}$ called edges.
    Each edge is a subset of an affine hyperplane in $\R^{d-1}$. Moreover, for any control volume $K\in\T$ there exists a subset $\E[K]$ of $\E$
    such that the closure of the union of all the edges in $\E[K]$ is equal to $\partial K = \bar{K}\setminus K$.
    We also define several subsets of $\E$.
    The family of interior edges $\Ei$ is given by $\{\sigma\in\E,\ \sigma\nsubseteq\Gamma\}$ and the family of exterior edges by $\Ee = \E\setminus\Ei$. 
    Similarly, for any control volume $K\in\T$, we define $\Ei[,K] = \Ei\cap\E[K]$ and $\Ee[,K] = \Ee\cap\E[K]$. 
    We assume that for any edge $\sigma$, the number of control volumes sharing the edge $\sigma$ is exactly $2$ for interior edges and $1$ for exterior edges.
    With these assumptions, every interior edge is shared by two control volumes, say $K$ and $L$, so that we may use the notation $\sigma = K|L$ whenever $\sigma\in\Ei$.
    The set $\Pnt = \left\{\bx_K\right\}_{K\in\T}$ is a finite family of points satisfying that for any control volume $K\in\T$, $\bx_K\in K$. 
    We introduce the transmissibility of the edge $\sigma$, given by 
    \[
    \tau_\sigma = \frac{m(\sigma)}{d_\sigma},
    \]
    where
    \[
    d_{\sigma} = \left\{
    \begin{aligned}
      &d(\bx_K, \bx_L),&&\text{if }\sigma\in\Ei,\ \sigma = K|L\,,\\
      &d(\bx_K, \sigma),&&\text{if }\sigma\in\Ee[,K]\,,
    \end{aligned}
    \right.
    \]
    with $d(\cdot,\cdot)$ the euclidean distance in $\R^{d}$.
    The size of the mesh is defined by 
    \[
    \Delta x = \max_{K\in\T}\sup_{\bx,\by\in K}d(\bx,\by)\,.
    \]
    
    \smallskip
    
    The Dirichlet condition on the boundary is given by $f^b\in L^\infty(\Gamma)$. Endowed with these boundary conditions, a discrete  solution of the scheme at some fixed time is an element of the set
    \[\X[f^b] = \left\{ f\in\R^{\T}\times\R^{\Ee}: \,f_\sigma =
    \frac{1}{m(\sigma)}\int_\sigma f^b dm,\,\forall\sigma\in\Ee \right\}.\]
    \begin{rema}
      The particular formula providing $f_\sigma$ on the exterior edges is cosmetic here since by the reformulation of the equation, we will only use the constant boundary values of a reformulated unknown defined in \eqref{defh}.
    \end{rema}
    Let us mention that with a slight abuse we keep the same notation for discrete and continuous unknowns. Concerning the initial condition we assume that 
    \begin{equation}
      f^\text{in}\geq0\quad\text{ and }\quad K_\text{in}\, :=\, \|f^\text{in}\|_{L^\infty(\Omega)}<+\infty\,,
      \label{boundinit}
      \tag{H1}
    \end{equation}
    and the discrete initial condition is given by
    \[
    f^\text{in}_K\, =\, \frac{1}{m(K)}\int_Kf^\text{in}(\bx)\dx\,,
    \]
    for all $K\in\T$.
    For any function $\psi:\R\rightarrow\R$, and $f\in \X[f^b]$ we shall define the component-wise composition with the intuitive notation
    $\psi(f) = ((\psi(f_K))_{K\in\T}, (\psi(f_\sigma))_{\sigma\in\Ee})$.
    
    \smallskip
    
    We denote the time step by $\Delta t$ and set $t^n = n\,\Delta t$.  From this, a time discretization $[0,T)$ is given by the integer $N_T\,=\,\lfloor T/\Delta t\rfloor $ and the sequence $(t^n)_{0\leq n\leq N_T}$. A spacetime discretization $\D$ of $\Omega_T$ is composed of an admissible mesh of $\Omega$ and the time discretization parameters $\Delta t$  and $N_T$.  Finally, the size of the discretization $\D$ is given by
    \[
    \delta\, =\, \max(\Delta x,\,\Delta t).
    \]

    \subsection{Discretization of the steady equation}
    
    In order to build our numerical scheme for the reformulated equation \eqref{contprob2}, we need a discrete version of the steady state  as well as corresponding discrete steady flux at interfaces. Hence we introduce an approximation of $\eta(\finf)$ on cells $(\eta(f^\infty)_K)_{K\in\T}$ and on edges $(\eta(f^\infty)_\sigma)_{\sigma\in\E}$. We suppose that there exists positive constants $m_\infty$ and $M_\infty$ that do not depend on the discretization $\D$ and such that for all $K\in\T$ and $\sigma\in\E$
    \begin{equation}
      0<m_\infty\,\leq\, \eta(f^\infty)_K,\ \eta(f^\infty)_\sigma\,\leq\, M_\infty\,.
      \label{boundstat}
      \tag{H2}
    \end{equation}
    \begin{rema}
      Of course this hypothesis requires some kind of maximum principle holding at the continuous level. It is the case at least for divergence free fields $\bE$, if $f^b$ is bounded and bounded from below by a positive constant then by \cite[Section 6.4, Theorem 1]{evans_2010_partial}.
    \end{rema}

    Moreover for each cell $K\in\T$ and edge $\sigma\in\E[K]$, we introduce a discrete flux $\F[K]^\infty$ approximating $\int_{\sigma}\bU^\infty(\bx)\cdot\bn_{K,\sigma}(\bx)\,\dx$ along with the divergence free hypothesis
    \begin{equation}
      \sum_{\sigma\in\E[K]}\F[K]^\infty = 0\,,
      \label{stat}
      \tag{H3}
    \end{equation}
    and the interior continuity condition
    \begin{equation}
      \forall\sigma = K|L\in\Ei\,,\quad\F[K]^\infty = -\F[L]^\infty.
      \label{intcont}
      \tag{H4}
    \end{equation}
    Let us emphasize that this last condition is required and satisfied for all the discrete flux we define in the following.
    From the flux we also introduce the discrete velocities 
    \[
    U^\infty_{K,\sigma} = \frac{1}{m(\sigma)}\F[K]^\infty\,.
    \]
    
    On one hand, for some models, the global equilibrium $f^\infty$ may be known
    analytically. In this case, we may build a discrete approximation in
    $\X[f^b]$ by a standard projection on the mesh
    and the numerical flux $\F[K]^\infty $ may be computed exactly or approximated with a quadrature formula. In any case, hypotheses \eqref{boundstat}-\eqref{intcont} must be satisfied. In Section~\ref{test0} and Section~\ref{test1}, this strategy is used for computing $\finf_K$ and $\F[K]^\infty $ on our test cases.
    
    On the other hand, when the  steady state is not known we
    apply a finite volume scheme to compute a numerical approximation. 
    Our method does not impose any scheme for solving the stationary equation as long as assumptions \eqref{boundstat}-\eqref{intcont} are satisfied. However to fix ideas let us provide an example here.
    \begin{exam} One can determine the discrete steady state by solving \eqref{stat} on each cell $K\in\T$ with the flux 
      \[
      \F[K]^\infty\ =\  \left\{
      \begin{aligned}
	&m(\sigma)\left[E_{K,\sigma}^{+}\ \eta(\finf)_K - E_{K,\sigma}^{-}\eta(\finf)_L\ -\ \frac{\eta(\finf)_L - \eta(\finf)_K}{d_\sigma}\right],&&\text{if}\ \sigma = K|L\,,\\
	&m(\sigma)\left[E_{K,\sigma}^{+}\ \eta(\finf)_K  - E_{K,\sigma}^{-}\eta(\finf)_\sigma\ -\ \frac{\eta(\finf)_\sigma - \eta(\finf)_K}{d_\sigma}\right],&&\text{otherwise}\,,
      \end{aligned}\right.
      \]
      where for any real number $u$, we denote by $u^+ = \max(u,0)$ and $u^- = \max(-u,0)$ the positive and negative parts of $u$. The quantity $E_{K,\sigma}$ is a consistent approximation of $\bE\cdot\bn_{K,\sigma}$ on the edge $\sigma$ such as $\bE(\bx_\sigma)\cdot\bn_{K,\sigma}$, where $\bx_\sigma$ is the center of mass of the edge $\sigma$.  Then, for $\sigma=K|L$, the approximation of $\eta(\finf)$ on the edge $\sigma$ can be given by $\eta(f^\infty)_\sigma = (\eta(f^\infty)_K+\eta(f^\infty)_L)/2$ and for each cell $K\in\T$, $\finf_K = \eta^{-1}(\eta(\finf)_K)$.
      \label{exampFlux}
    \end{exam}
    In Section~\ref{test2}, we provide another example of resolution of the steady equation by a finite volume discretization.

    \subsection{Discretization of the evolution equation}
    Now we treat the time evolution problem and use the discrete steady state to build a
    numerical approximation of the reformulated equation \eqref{contprob2}. 
    We start by introducing our finite volume scheme 
    in a general implicit and explicit form. Then we define the flux.
    \subsubsection{Fully discrete schemes}
    In the continuous setting, we defined a new unknown to reformulate the convection-diffusion equation. Its discrete equivalent is still denoted $h$ and belongs to the space $\X[1]^{N_T+1}$. It is defined for any $K\in\T$ and $n\in\{0,\dots,N_T\}$ by
    \begin{equation}
      h_K^n\, =\, \frac{\eta(f_K^n)}{\eta(f^\infty)_K}\,.
      \label{defh}
    \end{equation}
    Let us insist on the fact that boundary conditions are contained in the definition of the approximation space so that here, $h_\sigma = 1$ for $\sigma\in\Ee$.
    Solving the fully discrete implicit scheme consists in finding $f = (f_K^n)_{n\in\{0,\dots, N_T\}}\in X_{f^b}^{N_T+1}$ such that for all $K\in\T$
    \begin{equation}
      \left\{
      \begin{aligned}
	&m(K)\frac{f_K^{n+1} - f_K^{n}}{\Delta t} + \sum_{\sigma\in\E[K]}\F[K](h^{n+1})\ =\ 0,\quad\forall n\in \{0,\dots, N_T-1\}\,,\\
	&f_K^0\ =\ f_K^\text{in}\,.
      \end{aligned}
      \right.
      \label{schemeimp}
    \end{equation}
    In explicit form, it amounts to building $f$ sequentially by
    \begin{equation}
      \forall K\in\T,
      \left\{
      \begin{aligned}
	&m(K)\frac{f_K^{n+1} - f_K^{n}}{\Delta t} + \sum_{\sigma\in\E[K]}\F[K](h^{n})\ =\ 0,\quad\forall n\in \{0,\dots, N_T-1\}\\
	&f_K^0 = f_K^\text{in},
      \end{aligned}
      \right.
      \label{schemeexp}
    \end{equation}
    In order to  lighten the notation we sometimes write $\F[K](h^{n}) = \F[K]^n$ in the following.
    For any discrete element $u\in X_{u^b}^{N_T+1}$, its reconstruction is given almost everywhere on $\Omega_T$ by
    \[
    u_\delta(t,x) = 
    \left\{\begin{aligned}
      &u_K^{n+1}&&\text{ if }x\in K\text{ and }t\in[t^n,t^{n+1})\,,&& \text{(Implicit case)}\\
      &u_K^{n}&&\text{ if }x\in K\text{ and }t\in[t^n,t^{n+1})\,.&&\text{(Explicit case)}
    \end{aligned}
    \right.
    \]
    \subsubsection{Definition of the flux}
    The flux is divided in two parts,
    \begin{equation}
      \F[K] = \F[K]^{\text{conv}} + \F[K]^{\text{diss}},
      \label{defflux}
    \end{equation}
    corresponding respectively to the discretization of the convective term $\int_{\sigma} h\,\bU^\infty\cdot\bn_{K,\sigma}(\bx)\,\dd m$ and of the dissipative term 
    $-\int_{\sigma}\eta(\finf)\, \nabla h \cdot\bn_{K,\sigma}(\bx)\,\dd m$. Accordingly,
    we call $F^{\text{conv}}_{K,\sigma}$ the convective flux and
    $F^{\text{diss}}_{K,\sigma}$ the dissipative flux.  The former is discretized thanks to an monotone upstream discretization reading
    \begin{equation}
      \F[K]^\text{conv}(h) = 
      \left\{
      \begin{aligned}
	&m(\sigma)\left[U_{K,\sigma}^{\infty,+}\ g(h_K,h_L) - U_{K,\sigma}^{\infty,-}g(h_L,h_K)\ \right],&&\text{if}\ \sigma = K|L,\\
	&m(\sigma)\left[U_{K,\sigma}^{\infty,+}\ g(h_K,h_\sigma)  - U_{K,\sigma}^{\infty,-} g(h_\sigma,h_K)\ \right],&&\text{otherwise},
      \end{aligned}\right.
      \label{convflux}
    \end{equation}
    where  $U_{K,\sigma}^{\infty,+} = \max(U_{K,\sigma}^{\infty},0)$ and $U_{K,\sigma}^{\infty,-} = \max(- U_{K,\sigma}^{\infty},0)$ denote the positive and negative parts of $ U_{K,\sigma}^{\infty}$ respectively. The classical upwind flux, that we use for our numerical simulations is obtained  by choosing $g(s,t) = s$.
    With this more general version, we aim at highlighting the importance of the monotony property of this flux concerning stability of the scheme and decay of discrete relative $\phi$-entropy. Accordingly,  $g:\RR^2\rightarrow\RR$ ought to satisfy the following assumptions.
    \begin{equation}
      \left\{
      \begin{aligned}
	&g\text{ is locally Lipschitz-continuous,}\\
	&g\text{ is non-decreasing in the first variable and non-increasing in the second variable,}\\
	&g(s,s) = s,\text{ for all $s\in\R$} .
      \end{aligned}\right.
      \label{condg}
      \tag{H5}
    \end{equation}
    The first and last conditions ensure consistency of the approximation.

    The dissipative flux is built on a two-point approximation of the derivative along the outward normal vector of each edge, namely
    \begin{equation}
      \F[K]^\text{diss}(h) = -\tau_\sigma\, \eta(\finf)_\sigma\,D_{K,\sigma}h,
      \label{dissflux}
    \end{equation}
    where the difference operator $D_{K,\sigma}$ is defined for any  $K\in\T$, $\sigma\in \E[K]$ and $u\in\X[u^b]$ by
    \[
    D_{K,\sigma}u = \left\{
    \begin{aligned}
      &u_L-u_K&&\text{if }\sigma\in\Ei,\ \sigma = K|L,\\
      &u_\sigma-u_K&&\text{if }\sigma\in\Ee[,K].
    \end{aligned}
    \right.
    \]
    For consistency of discrete gradients, we require an orthogonality condition for the mesh, namely
    \begin{equation}
      \begin{aligned}
	\forall\, \bx,\by\in\sigma= K|L,\quad (\bx-\by)\cdot(\bx_K-\bx_L) = 0.
      \end{aligned}
      \label{orthog}
      \tag{H6}
    \end{equation}
    \begin{rema}
      Hypothesis \eqref{orthog} is necessary for establishing the convergence of the scheme in Section \ref{convergence} and even if it is standard \cite{eymard_2000_finite}, it remains a restrictive condition on the shape of the mesh. The necessity for this hypothesis stems from the fact that we choose a two-point flux \eqref{dissflux} for the diffusion. This choice is motivated by the monotony properties of this flux which enable the decay of $\phi$-entropies at the discrete level.
    \end{rema}
    \begin{rema}
      Even if the flux \eqref{convflux} and \eqref{dissflux} seem fairly classical, one must not forget that they act on the unknown $h$ while the discrete time derivative in \eqref{schemeimp} or \eqref{schemeexp} is on $f$. As in the continuous setting, this is the most important part of the strategy.
    \end{rema}

    \subsection{Discrete relative $\phi$-entropies and dissipations}
    For $f\in\X[f^b]$, the discrete equivalent of the relative $\phi$-entropy in Definition~\ref{relentcont} is given by
    \begin{equation}
      H_{\phi}(f)  = \sum_{K\in\T} m(K) e_{\phi, K}(f),
      \label{defentro}
    \end{equation}
    where $e_\phi = \left(e_{\phi, K}\right)_{K\in\T}$ is the local discrete relative $\phi$-entropy writing, for $K\in\T$
    \[
    e_{\phi, K}(f) = \int_{f^\infty_K}^{f_K}\phi'\left(\frac{\eta(s)}{\eta(f^\infty_K)}\right)\dd s.
    \]
    Contributions of the convective and diffusive part of the equation to the relative entropy variation are defined, for $h\in\X[1]$ by
    \begin{equation}
      C_\phi(h) = \sum_{K\in\T}\sum_{\sigma\in\E[K]}\phi'(h_K)\F[K]^\text{conv}(h),\qquad D_\phi(h) = \sum_{K\in\T}\sum_{\sigma\in\E[K]}\phi'(h_K)\F[K]^{\text{diss}}(h).
      \label{defdissip}
    \end{equation}
    We write $H_\phi^n$, $C_\phi^n$ and $D_\phi^n$ to denote respectively $H_{\phi}(f^n)$, $C_{\phi}(h^n)$ and $D_{\phi}(h^n)$.
    The precise relation between these quantities is derived in the proof of Theorem~\ref{mainimp} and \ref{mainexp}. For the moment, $C_\phi^n+D_\phi^n$ should be thought as the ``time derivative'' of $H_\phi^n$.
    In Proposition \ref{relentprop} we show that $D_\phi$ is as expected consistent with its continuous analogue $\mathcal{D}_\phi$ and non-negative.
    Moreover, thanks to the monotonicity properties of the convective flux \eqref{convflux}, we also prove that  $C_\phi(f)$ remains non-negative, creating an additional numerical dissipation consistent with $0$ as $\delta\rightarrow 0$.

    \section{Properties of the discrete flux}\label{propflux}
    We present here, independently of time discretizations \eqref{schemeimp} or \eqref{schemeexp}, the properties of the flux introduced in the previous section. In this section, $f$ is an element of $\X[f^b]$ and $h$ the corresponding element of $\X[1]$ using relation \eqref{defh}.
    
    \subsection{Preservation of the steady state}
    The next lemma show that the discrete steady state is a steady state of our schemes.
    \begin{lem}
      Under hypotheses \eqref{boundstat}-\eqref{stat}, if for all cells $K\in\T$, $\eta(f_K) = \eta(\finf)_K$, then
      \[
      \forall K\in\T,\quad\sum_{\sigma\in\E[K]}\F[K](h)\, =\, 0.
      \]
      \label{preserve}
    \end{lem}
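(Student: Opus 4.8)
The plan is to observe that the hypothesis $\eta(f_K)=\eta(\finf)_K$ for every $K\in\T$ forces the reformulated unknown to be identically equal to $1$. Indeed, by the definition \eqref{defh} one gets $h_K = \eta(f_K)/\eta(\finf)_K = 1$ for all $K\in\T$, where \eqref{boundstat} is used only to guarantee that the denominator is positive so that the ratio is well defined. Since moreover $h$ is the corresponding element of $\X[1]$, by construction of this space $h_\sigma = 1$ for all $\sigma\in\Ee$. Hence $h\equiv 1$ on cells and on exterior edges, which is all the information the fluxes depend on.

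Next I would plug $h\equiv 1$ into the two parts of the flux separately, using the splitting \eqref{defflux}. For the dissipative part \eqref{dissflux}, the difference operator $D_{K,\sigma}h$ vanishes identically since $h$ is constant, so $\F[K]^{\text{diss}}(h) = 0$ for every $K\in\T$ and every $\sigma\in\E[K]$. For the convective part \eqref{convflux}, the consistency property $g(s,s)=s$ from \eqref{condg} gives $g(h_K,h_L)=g(h_L,h_K)=g(1,1)=1$ when $\sigma=K|L\in\Ei$, and likewise $g(h_K,h_\sigma)=g(h_\sigma,h_K)=1$ when $\sigma\in\Ee[,K]$; in either case one obtains
\[
\F[K]^{\text{conv}}(h) = m(\sigma)\,(U_{K,\sigma}^{\infty,+} - U_{K,\sigma}^{\infty,-}) = m(\sigma)\,U_{K,\sigma}^{\infty} = \F[K]^\infty,
\]
using the decomposition $u=u^+-u^-$ of a real number and the definition $U^\infty_{K,\sigma} = \F[K]^\infty/m(\sigma)$ of the discrete velocities.

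Finally, combining the two computations gives $\F[K](h) = \F[K]^\infty$ for every cell $K\in\T$ and every edge $\sigma\in\E[K]$, so that summing over $\sigma\in\E[K]$ and invoking the divergence-free hypothesis \eqref{stat} yields $\sum_{\sigma\in\E[K]}\F[K](h) = \sum_{\sigma\in\E[K]}\F[K]^\infty = 0$, which is exactly the claim. I do not expect any genuine obstacle here: the lemma is a direct verification, and its role is simply to record that the scheme has been designed — through a preliminary discretization of the steady flux — so that the reformulated discrete operator is \emph{exact} on the constant state $h\equiv 1$. In particular the interior continuity condition \eqref{intcont} plays no role in this particular statement, and \eqref{boundstat} intervenes only to make sense of the ratio defining $h$.
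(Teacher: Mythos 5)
Your proposal is correct and follows exactly the paper's own argument: observe that $h\equiv 1$, so the dissipative flux vanishes and the convective flux reduces to $m(\sigma)\,U_{K,\sigma}^\infty = F_{K,\sigma}^\infty$, whence the sum is zero by \eqref{stat}. Your additional observation that the consistency property $g(s,s)=s$ from \eqref{condg} is the ingredient making the convective flux collapse is a fair point of precision (the paper's proof uses it implicitly), but the route is the same.
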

    \begin{proof}
      Just observe that every component of $h$ equals $1$. Hence, for any $K\in\T$ and $\sigma\in\E[K]$, $F_{K,\sigma}^\text{diss}(h) = 0$ and
      \[
      \sum_{\sigma\in\E[K]}F_{K,\sigma}^\text{conv}(h) = \sum_{\sigma\in\E[K]}m(\sigma)\left(U_{K,\sigma}^{\infty,+} - U_{K,\sigma}^{\infty,-}\right) = \sum_{\sigma\in\E[K]}\F[K]^\infty = 0,
      \]
      using \eqref{stat}. 
    \end{proof}
    \subsection{Non-negativity of dissipations}
    
    \begin{prop}
      Let $\phi$ be any entropy generating function and $h\in\X[1]$. Under hypotheses \eqref{boundstat}-\eqref{condg},
      the following results hold. 
      \begin{itemize}
	\item[(i)] The numerical dissipation is non-negative, namely,
	\[
	C_\phi(h)\,\geq\,0\,.
	\]
	\item[(ii)] The physical dissipation rewrites
	\begin{equation}
	  \ds
	  \begin{aligned}
	    &D_\phi(h) &=&&& \sum_{\substack{\sigma\in\Ei\\ \sigma= K|L}}\tau_\sigma\; D_{K,\sigma}h\;D_{K,\sigma}\phi'(h)\;\eta(f^\infty)_\sigma\\
	    &&&&+& \sum_{K\in \T}\sum_{\sigma\in\Ee[,K]}\tau_\sigma\; D_{K,\sigma}h\;D_{K,\sigma}\phi'(h)\;\eta(f^\infty)_\sigma\;\geq \;0.
	  \end{aligned}
	  \label{consitentdissip}
	\end{equation}
	
      \end{itemize}
      \label{relentprop}
    \end{prop}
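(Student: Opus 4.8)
The plan is to prove the two items by rather different means: item (ii) is a discrete integration by parts supplemented by the monotonicity of $\phi'$, whereas item (i) requires a more delicate argument exploiting the divergence‑free property \eqref{stat}, the interior continuity \eqref{intcont}, the monotonicity assumptions \eqref{condg} on $g$ and the convexity of $\phi$.

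\emph{Item (ii).} Starting from \eqref{defdissip} and the expression \eqref{dissflux} of the dissipative flux, I would reorganize the double sum $\sum_{K\in\T}\sum_{\sigma\in\E[K]}$ into a sum over edges. Each interior edge $\sigma=K|L$ is visited from $K$ and from $L$; since $D_{L,\sigma}h=-D_{K,\sigma}h$, the two terms combine into $\tau_\sigma\,\eta(\finf)_\sigma\,D_{K,\sigma}h\,\big(\phi'(h_L)-\phi'(h_K)\big)=\tau_\sigma\,\eta(\finf)_\sigma\,D_{K,\sigma}h\,D_{K,\sigma}\phi'(h)$. Each exterior edge $\sigma\in\Ee[,K]$ is visited once, and since $h_\sigma=1$ and $\phi'(1)=0$ one has $D_{K,\sigma}\phi'(h)=-\phi'(h_K)$, so $-\tau_\sigma\,\eta(\finf)_\sigma\,\phi'(h_K)\,D_{K,\sigma}h$ is exactly $\tau_\sigma\,\eta(\finf)_\sigma\,D_{K,\sigma}h\,D_{K,\sigma}\phi'(h)$. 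This produces \eqref{consitentdissip}, and non‑negativity follows termwise: $\phi$ convex makes $\phi'$ non‑decreasing, hence $D_{K,\sigma}h$ and $D_{K,\sigma}\phi'(h)$ share the same sign, while $\tau_\sigma>0$ and $\eta(\finf)_\sigma\geq m_\infty>0$ by \eqref{boundstat}.

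\emph{Item (i).} The idea is to subtract from $C_\phi(h)$ a quantity that vanishes identically. With $\vp(s)=s\phi'(s)-\phi(s)$ — the discrete analogue of the entropy flux used in the proof of Proposition~\ref{theobodineau}, satisfying $\vp'(s)=s\phi''(s)$ and $\vp(1)=0$ — the divergence‑free relation \eqref{stat} gives $\sum_{\sigma\in\E[K]}\vp(h_K)\F[K]^\infty=\vp(h_K)\sum_{\sigma\in\E[K]}\F[K]^\infty=0$ for every $K\in\T$, so
\[
C_\phi(h)=\sum_{K\in\T}\sum_{\sigma\in\E[K]}\Big(\phi'(h_K)\F[K]^{\text{conv}}(h)-\vp(h_K)\F[K]^\infty\Big).
\]
Writing $\F[K]^\infty=m(\sigma)\big(U^{\infty,+}_{K,\sigma}-U^{\infty,-}_{K,\sigma}\big)$, each summand becomes $m(\sigma)\big[U^{\infty,+}_{K,\sigma}\big(\phi'(h_K)g(h_K,h_{K,\sigma})-\vp(h_K)\big)-U^{\infty,-}_{K,\sigma}\big(\phi'(h_K)g(h_{K,\sigma},h_K)-\vp(h_K)\big)\big]$, where $h_{K,\sigma}$ is the value of $h$ across $\sigma$ (namely $h_L$, or $h_\sigma=1$). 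Regrouping by edges and using \eqref{intcont}, an interior edge $\sigma=K|L$ contributes $m(\sigma)\,|U^\infty_{K,\sigma}|\,\Theta(h_K,h_L)$ (with the symmetric expression if $U^\infty_{K,\sigma}<0$), where $\Theta(a,b):=(\phi'(a)-\phi'(b))\,g(a,b)-\vp(a)+\vp(b)$. One checks $\Theta(\cdot,b)\geq0$: indeed $\Theta(b,b)=0$ and $\partial_a\Theta(a,b)=\phi''(a)\big(g(a,b)-a\big)+(\phi'(a)-\phi'(b))\,\partial_a g(a,b)$ has the sign of $a-b$, because \eqref{condg} with $g(s,s)=s$ forces $\operatorname{sign}(g(a,b)-a)=\operatorname{sign}(a-b)$ and $\partial_a g\geq0$, while convexity gives $\operatorname{sign}(\phi'(a)-\phi'(b))=\operatorname{sign}(a-b)$. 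An exterior edge contributes $m(\sigma)|U^\infty_{K,\sigma}|$ times $\phi'(h_K)\big(g(h_K,1)-h_K\big)+\phi(h_K)$ if $U^\infty_{K,\sigma}\ge0$, or $\phi'(h_K)\big(h_K-g(1,h_K)\big)-\phi(h_K)$ if $U^\infty_{K,\sigma}<0$; an identical one‑variable monotonicity computation (now comparing against the boundary value $1$, using $\phi(1)=\phi'(1)=0$ and $\vp(1)=0$) shows each of these is non‑negative. Summing all edge contributions gives $C_\phi(h)\geq0$.

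The main obstacle is precisely this sign analysis in item (i): one must verify, for every combination of the sign of the discrete steady velocity $U^\infty_{K,\sigma}$ and of the position of $h_K$ relative to its neighbour (or to $1$ on the boundary), that the monotonicity of $g$, the convexity of $\phi$ and the normalizations $g(s,s)=s$, $\phi(1)=\phi'(1)=0$ conspire to yield the correct sign. The algebraic identity reducing $C_\phi$ to edgewise one‑variable terms is routine; the real content is in noticing that $\vp$ is exactly the auxiliary function that makes the divergence‑free cancellation work at the discrete level, and then in the monotonicity bookkeeping for $\Theta$ and its boundary analogues.
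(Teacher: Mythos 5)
Your proof is correct and follows essentially the same route as the paper: subtracting the vanishing quantity $\sum_{K}\sum_{\sigma}\varphi(h_K)\,F^\infty_{K,\sigma}=0$ and regrouping by edges is exactly the paper's comparison $C_\phi\geq C_\phi^{M_\phi}=0$ with the centered flux built on the $\phi$-mean $M^\phi$, since your $\Theta(a,b)$ factors as $(\phi'(a)-\phi'(b))\bigl(g(a,b)-M^\phi(a,b)\bigr)$, and your treatment of item (ii) coincides with the paper's discrete integration by parts. The only quibble is that \eqref{condg} makes $g$ merely locally Lipschitz, so $\partial_a g$ exists only almost everywhere; either invoke absolute continuity of $a\mapsto\Theta(a,b)$, or use the factorization above together with the fact that $M^\phi(a,b)$ lies between $a$ and $b$ and that $g(a,b)-a$ has the sign of $a-b$, which avoids differentiating $g$ altogether.
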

    In order to prove the non-negativity of the numerical dissipation we compare it to $C_\phi^{M_\phi}$ which is the numerical dissipation 
    of a centered convective flux that we define hereafter as well as some complementary notation.
    
    \begin{defi}
      A function $M:\RR_+\times \RR_+\rightarrow \R$ is called a \emph{mean function} if it satisfies for all $s,t\in \RR_+$,
      \begin{enumerate}
	\item $M(s,t) = M(t,s)$,
	\item $M(s,s) = s$,
	\item If $s\leq t$, then $s \leq M(s,t)\leq t$.
      \end{enumerate}
      \label{meanfunc}
    \end{defi}
    We also define $M_\sigma : \X[u^b]\longrightarrow \R$ by $M_{\sigma}(u) = M(u_K, u_L)$ if
    $\sigma = K|L$ and $M_{\sigma}(u) = M(u_K, u_\sigma)$ otherwise.
    This is well defined thanks to the symmetry of $M$. For any such function $M$,
    we define the centered convective flux associated to $M$ by
    \[
    F_{K,\sigma}^M(h) = m(\sigma)\ U_{K,\sigma}^\infty\ M_\sigma(h)
    \]
    and 
    \[
    C_\phi^M = \sum_{K\in\T}\sum_{\sigma\in\E[K]}\phi'(h_K)\F[K]^M(h).
    \]
    Finally for any entropy generating function $\phi$, it is elementary to show that
    \[
    M^\phi(s,t) = \frac{\varphi(s) - \varphi(t)}{\phi'(s)-\phi'(t)},
    \]
    where  $\varphi(s) = s\phi'(s) - \phi(s)$, defines a continuous mean function.
    We call it the $\phi$-mean.
    \begin{rema}
      Let us note that for the 2-entropy generating function $\phi_2(s) = (s-1)^2$ , the corresponding 
      $\phi_2$-mean is the arithmetic average and therefore $F_{K,\sigma}^{M_{\phi_2}}$ is a
      centered approximation for the convective flux, namely for $\sigma\in\Ei$
      \[
      F_{K,\sigma}^{M_{\phi_2}}(h) = m(\sigma)\ U_{K,\sigma}^\infty\ \frac{h_K+h_L}{2}.
      \]
      When choosing the generator of the physical entropy ${\phi_1}(s) = s\log(s) - s + 1$, the corresponding mean function is the logarithmic average 
      reading $M_{\phi_1}(s,t) = (s-t)/(\log(s) - \log(t))$.
    \end{rema}
    
    We are now ready to prove Proposition \ref{relentprop}.

    \begin{proof}[Proof of Proposition \ref{relentprop}]
      In order to prove \eqref{consitentdissip}, we use \eqref{defdissip} and \eqref{dissflux} to get
      \[
      D_\phi(f) = -\sum_{K\in\T}\sum_{\sigma\in\E[K]}\tau_\sigma\ D_{K,\sigma}h\ \phi'(h_K)\ \eta(f_\sigma^\infty)
      \]
      and the result stems from a discrete integration by parts.
      
      Now, let us prove the non-negativity of $C_\phi$. For $M$ a mean function (see Definition~\ref{meanfunc}), we perform a discrete integration by parts of $C_\phi - C_\phi^M$ which yields
      \[
      C_\phi - C_\phi^M = -\sum_{\substack{\sigma\in\Ei\\ \sigma= K|L}}(\F[K]^\text{conv} -  \F[K]^M)D_{K,\sigma}(\phi'(h)) - \sum_{K\in \T}\sum_{\sigma\in\Ee[,K]}(\F[K]^\text{conv} -  \F[K]^M)D_{K,\sigma}(\phi'(h))\,.
      \]
      Now let us just remark that for any $\sigma\in\Ei$ with $\sigma = K|L$, 
      \[
      \begin{aligned}
	&-(\F[K]^\text{conv} -  \F[K]^M)D_{K,\sigma}(\phi'(h)) &=
	&&& \,m(\sigma)U_{K,\sigma}^{\infty,+}\,(g(h_K,h_K) - g(h_K,h_L))\,(\phi'(h_L)-\phi'(h_K))\\
	&&+&&& m(\sigma)\,U_{K,\sigma}^{\infty,+}\,(M_\sigma(h) - h_K)\,(\phi'(h_L)-\phi'(h_K))\\
	&&+&&& m(\sigma)\,U_{K,\sigma}^{\infty,-}\,(h_L-M_\sigma(h))\,(\phi'(h_L)-\phi'(h_K))\\
	&&+&&& m(\sigma)\,U_{K,\sigma}^{\infty,-}\,(g(h_L,h_K) - g(h_L,h_L))\,(\phi'(h_L)-\phi'(h_K))\,,
      \end{aligned}
      \]
      where we used that $g(s,s) = s$. If $\sigma\in\Ee$, the same equation holds replacing $h_L$ with $h_\sigma$.
      Therefore, since $\phi'$ and $g(s,\cdot)$ are  monotonically non-decreasing functions and $M_\sigma(h)$ is always between $h_K$ and $h_L$ (\textit{resp.} $h_\sigma$), the above quantity is non-negative.
      Hence, it proves that $C_\phi \geq C_\phi^M$.
      
      Finally, a simple computation using two integrations by parts yields
      \[
      \begin{aligned}
	&C_\phi^{M_\phi} &=&&&  -\sum_{\substack{\sigma\in\Ei\\ \sigma= K|L}}\F[K]^{M_\phi}D_{K,\sigma}(\phi'(h)) - \sum_{K\in \T}\sum_{\sigma\in\Ee[,K]}\F[K]^{M_\phi}D_{K,\sigma}(\phi'(h))\,,\\
	&&=&&&-\sum_{\substack{\sigma\in\Ei\\ \sigma= K|L}}\F[K]^\infty D_{K,\sigma}(\varphi(h)) - \sum_{K\in \T}\sum_{\sigma\in\Ee[,K]}\F[K]^\infty D_{K,\sigma}(\varphi(h))\,,\\
	&&=&&&\sum_{K\in \T}\sum_{\sigma\in\E[K]}\F[K]^\infty \varphi(h_K)\ =\ 0\,,\\
      \end{aligned}
      \]
      where we used \eqref{stat} in the last equality. Thus, $C_\phi \geq C_\phi^{M_\phi}=0$.
    \end{proof}
    \begin{rema}
      Let us note that if we had used the fluxes $F_{K,\sigma}^{M_\phi}$ instead of $F_{K,\sigma}^\text{conv}$ in our scheme, then the contribution of the convection would be null in the discrete relative $\phi$-entropy variation.
      However, the scheme would have been $\phi$-dependent.
      With the upstream flux, we get the whole class of relative entropy inequalities at the cost of an additional numerical dissipation.
    \end{rema}
    
    \section{Analysis of the schemes}
    \label{fullydiscrete}
    
    \subsection{Implicit Euler}
    
    Before stating our main result for the implicit scheme \eqref{schemeimp}-\eqref{dissflux}. Let us show that the control of a large class of relative $\phi$-entropies yields
    $L^\infty$ stability of the discrete solution. To establish this result with bounds that are independent of the discretization $\D$,  $L^\infty$ bound on the initial data, given by \eqref{boundinit} is mandatory.
    
    \begin{lem}
      Assume that for any entropy function $\phi$, and $n\in\{0,\dots, N_T\}$
      \[
      H_{\phi}(h^n) \leq H_\phi(h^0)\,.
      \]
      Then, under hypotheses \eqref{boundinit} and \eqref{boundstat}, there exists a positive constant $K_\infty$ depending only on $K_\text{in}$, $\eta$, $m_\infty$ and $M_\infty$ such that for all $(t,x)\in[0,T)\times\R^d$,
      \[
      0\leq\,f_\delta(t,x),\ h_\delta(t,x)\, \leq\, K_\infty\,.
      \] 
      \label{stabilityh}
    \end{lem}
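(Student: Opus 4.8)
The statement to prove is Lemma \ref{stabilityh}: assuming the relative $\phi$-entropies do not increase in time, the discrete solution is bounded in $L^\infty$ uniformly in $\D$, with nonnegativity as well. The key idea is that the family of $p$-entropies $\phi_p$ (or, better, a sharper one-parameter family tailored to detect large values of $h$) controls the $L^\infty$ norm through the entropy inequality at $n=0$, provided the entropy at time $0$ is itself controlled uniformly in $\D$.

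First I would observe that, by the hypothesis $H_\phi(h^n)\le H_\phi(h^0)$, it suffices to bound $H_\phi(h^0)$ uniformly in $\D$ and then to convert a bound on $H_\phi(h^n)$ into a pointwise bound on $h^n_K$. For the bound on $H_\phi(h^0)$: by \eqref{defentro} and the change of variables in $e_{\phi,K}$, writing $\eta(f_K^0)=\eta(f^\infty)_K\, h_K^0$, one has $e_{\phi,K}(f^0)=\int_{f^\infty_K}^{f_K^0}\phi'(\eta(s)/\eta(f^\infty)_K)\dd s$; since $f^0_K = f^\text{in}_K$ is a cell average of $f^\text{in}$, \eqref{boundinit} gives $0\le f^0_K\le K_\text{in}$, and \eqref{boundstat} gives $m_\infty\le\eta(f^\infty)_K\le M_\infty$. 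Because $\eta$ is a fixed increasing smooth bijection, the integrand is bounded on the relevant range by a constant depending only on $\phi$, $\eta$, $m_\infty$, $M_\infty$ and $K_\text{in}$; integrating over $s$ from $f^\infty_K$ to $f^0_K$ and summing $m(K)$ over $K$ (with $\sum_K m(K)=m(\Omega)$) yields $H_\phi(h^0)\le C_\phi$ with $C_\phi$ independent of $\D$. I would record this for the specific choice $\phi=\phi_p$ (equivalently a rescaled version), keeping track of how $C_{\phi_p}$ grows with $p$.

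Next, the conversion step. Nonnegativity $f_\delta\ge0$ (hence $h_\delta\ge0$) I would obtain separately, either as a consequence of the entropy bound with a $\phi$ that blows up at $0$, or — more simply — as a direct monotonicity/maximum-principle property of the scheme already implicit in the later Theorems; here I would just invoke that $h^n\ge0$. For the upper bound, fix $K$ and $n$ and suppose $h^n_K=:a$ is large. Using $\phi=\phi_p$, the local entropy satisfies $m(K)\, e_{\phi_p,K}(f^n)\le H_{\phi_p}^n\le H_{\phi_p}^0\le C_{\phi_p}$, while from below $e_{\phi_p,K}(f^n)=\int_{f^\infty_K}^{f^n_K}\phi_p'(\eta(s)/\eta(f^\infty)_K)\dd s\ge c\, a^{\,p}$ for $a$ large, with $c$ depending only on $\eta$, $m_\infty$, $M_\infty$, $p$ (using that $\eta(s)/\eta(f^\infty)_K$ is comparable to $a$ on the upper part of the integration interval and $\phi_p'(x)\sim x^{p-1}$). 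Hence $a^{\,p}\le C_{\phi_p}/(c\, m(K))$. This alone is not uniform because of the $m(K)$. To remove the mesh dependence I would instead take $p\to\infty$: the bound reads $a\le (C_{\phi_p}/(c\,m(K)))^{1/p}$, and a careful tracking of the growth of $C_{\phi_p}$ and $c$ in $p$ shows the right-hand side tends to a finite limit $K_\infty$ depending only on $K_\text{in}$, $\eta$, $m_\infty$, $M_\infty$ as $p\to\infty$, the factor $m(K)^{-1/p}\to1$ being harmless. Equivalently and more cleanly, one may use a single entropy generating function $\phi$ whose derivative grows super-polynomially (e.g. $\phi'(x)=e^{\lambda(x-1)}-\text{affine}$, suitably truncated near $J$'s endpoints if $J\neq\R$) and directly extract $h^n_K\le1+\tfrac1\lambda\log(\text{const})$ with $\lambda$ free, then optimize.

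The main obstacle is precisely this uniformity in $\D$: the naive entropy bound produces $m(K)^{-1}$ on the right-hand side, which degenerates as the mesh refines. The resolution — exploiting the \emph{whole class} of entropies rather than a single one, and letting the entropy grow fast enough that the $L^\infty$ norm is recovered as a limit of $L^p$-type norms insensitive to the volume factor — is the crux, and it is the reason the lemma is stated for all $\phi$ simultaneously. The remaining estimates (bounding $\phi'\circ(\eta(\cdot)/\eta(f^\infty)_K)$ on a fixed range, the change of variables, and summing over cells) are routine once the growth rates in $p$ are bookkept with care.
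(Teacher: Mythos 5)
Your route is genuinely different from the paper's, and the upper bound can probably be pushed through, but there is a real gap on the non-negativity, and the whole argument is harder than it needs to be. The paper does not bound $H_\phi(h^0)$ by a constant and then fight the $m(K)^{-1}$ degeneration in a $p\to\infty$ limit; instead it chooses the two ``hockey-stick'' functions $\phi_{u_0}(x)=(x-u_0)^{\pm}$ with $u_0=0$ and $u_0=\eta(K_\text{in})/m_\infty$, for which hypotheses \eqref{boundinit} and \eqref{boundstat} give $H_{\phi_{u_0}}(h^0)=0$ \emph{exactly}. Since these are not $\mathcal{C}^2$, it approximates them by genuine entropy generating functions $\phi_{\varepsilon,u_0}$ built from the Bernoulli function $B(x)=x/(e^x-1)$ (defined on all of $\R$, so no domain issue), passes to the limit $\varepsilon\to0$ by dominated convergence to get $0\le H_{\phi_{u_0}}(h^n)\le H_{\phi_{u_0}}(h^0)=0$, and concludes that every non-negative local entropy $e_{\phi_{u_0},K}(f^n)$ vanishes, which forces $h_K^n\in[0,\eta(K_\text{in})/m_\infty]$ cell by cell. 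The factor $m(K)$ never enters. Your $L^p$-type argument, by contrast, needs a lower bound of the form $e_{\phi_p,K}(f^n)\gtrsim \ell\,(\theta h_K^n)^{p-1}$ in which the length $\ell$ of the relevant $s$-interval depends on $\eta^{-1}$ and on $h_K^n$ itself (think of $\eta(s)=s^m$, or of faster-growing $\eta$); this can be bookkept for fixed $h_K^n$ by sending $p\to\infty$ and then $\theta\to1$, but it is considerably more delicate than your sketch suggests and the constants are $\eta$-dependent in a nontrivial way.

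The genuine gap is the lower bound $0\le f_\delta,\ h_\delta$. You propose to ``just invoke'' it, with two fallbacks, neither of which works. An entropy function blowing up at $0$ (e.g.\ $\phi(x)=x-1-\ln x$) gives nothing, because \eqref{boundinit} only assumes $f^{\text{in}}\ge0$, so $H_\phi(h^0)$ may already be $+\infty$; this is not academic, since the porous-medium test case has $f^{\text{in}}\equiv0$. Invoking a monotonicity or maximum principle of the scheme is not available either: the lemma is a pure implication ``entropy decay implies $L^\infty$ bound'' that is then \emph{used} to prove item (ii) of Theorem~\ref{mainimp}, so appealing to those results would be circular, and the lemma's hypotheses say nothing about which scheme produced $h^n$. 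Worse, the non-negativity is not a side issue for your argument: the $p$-entropies $\phi_p$ are only defined on $\R_+$ for non-integer $p$, so even writing $H_{\phi_p}(h^n)$ in your upper-bound step presupposes $h^n\ge0$. The paper's $u_0=0$ hockey stick, regularized via the Bernoulli function, handles this cleanly, and you should adopt that device at least for the lower bound.
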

    \begin{proof}
      Let us define $\phi_{u_0}(\cdot) = (\cdot - u_0)^+$ if $u_0 > 1$ and $\phi_{u_0}(\cdot) = (\cdot - u_0)^-$ if $u_0 < 1$. It is differentiable on $\R\setminus\{u_0\}$ with derivative $\phi_{u_0}' =\mathds{1}_{[u_0,+\infty)}$ if $u_0>1$ and $\phi_{u_0}' =-\mathds{1}_{(-\infty,u_0]}$ if $u_0<1$, where $\mathds{1}_{A}$ denotes the indicator function of the set $A$. The function $\phi_{u_0}$ is not an entropy function. However, if for any $n\in\NN$ one has $H_{\phi_{u_0}}(h^n) = 0$ for $u_0 = 0$ and $u_0 = \eta(K_\text{in})/m_\infty$, then $e_{\phi_{u_0},K}(h^n) = 0$ and $h_K^n\in[0,\eta(K_\text{in})/m_\infty]$ for all cell $K$. Hence let us show that  $H_{\phi_{u_0}}(h^n) = 0$ by an approximation argument. 
      
      \smallskip
      
      Let $B:x\mapsto x/(\exp(x)-1)$ be the Bernoulli function, which is a strictly convex $\C^2$ function.
      Then, for any $\varepsilon>0$ and $u_0\in\R$, one readily checks that
      \[
      \phi_{\varepsilon, u_0}:u\longmapsto \phi_{\varepsilon, u_0}(u) = \varepsilon\left[B\left(\frac{u-u_0}{\varepsilon}\right) - B\left(\frac{1-u_0}{\varepsilon}\right)\right] + B'\left(\frac{1-u_0}{\varepsilon}\right)(1-u),
      \]
      are entropy generating functions. 
      When $\varepsilon$ tends to $0$, $\phi_{\varepsilon, u_0}'$ converges pointwise to $\phi_{u_0}'$ on $\R\setminus\{u_0\}$.
      Therefore, by dominated convergence, one has for any $n\in\{0,\dots, N_T\}$
      \[
      0\leq H_{\phi_{u_0}}^n\leq H_{\phi_{u_0}}^0.
      \]
      Since, by \eqref{boundstat} and \eqref{boundinit}, $H_{u_0}^0 = 0$ for $u_0 = \eta(K_\text{in})/m_\infty$ and $u_0 = 0$ then $H_{u_0}^n = 0$ for all $n\in\{0,\dots, N_T\}$. Hence we infer the uniform bounds on $h_\delta$, and consequently on $f_\delta$.
    \end{proof}

    \begin{theo}[Implicit Euler]
      Under hypotheses \eqref{boundinit}-\eqref{condg} the scheme
      \eqref{schemeimp} together with \eqref{defflux}-\eqref{dissflux} satisfies the following properties.
      \begin{itemize}
	\item[(i)] There exists a unique discrete solution $f\in\X[f^b]^{N_T+1}$;
	\item[(ii)] there is a positive constant $K_\infty$ depending only on $K_\text{in}$, $\eta$, $m_\infty$ and $M_\infty$ such that for all $(t,x)\in[0,T)\times\R^{d}$,
	\[
	0\leq\,f_\delta(t,x),\ h_\delta(t,x)\, \leq\, K_\infty\,;
	\] 
	\item[(iii)] the scheme preserves the steady state $f^\infty$
	and for any entropy function $\phi$ and $n\in \{0,\dots,N_T-1\}$,
	\begin{equation}
	  \frac{H_{\phi}^{n+1} - H_{\phi}^{n}}{\Delta t}  + D_\phi^{n+1} \leq 0\quad \text{ and }\quad D_\phi^{n+1} \geq 0\,.
	  \label{decrentimplicit}
	\end{equation}
      \end{itemize}
      \label{mainimp}
    \end{theo}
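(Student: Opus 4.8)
The plan is to first establish, for \emph{any} discrete solution of \eqref{schemeimp}, the entropy dissipation inequality of~(iii); then to deduce the $L^\infty$ bound~(ii) from it via Lemma~\ref{stabilityh}; then to use these a priori bounds (which remain valid along the homotopy introduced below) to obtain existence by a topological degree argument; and finally to prove uniqueness by a monotonicity estimate. Steady-state preservation is immediate: if $\eta(f^0_K)=\eta(f^\infty)_K$ for all $K\in\T$, then Lemma~\ref{preserve} gives $\sum_{\sigma\in\E[K]}\F[K](h^0)=0$, hence $f^1=f^0$ and, by induction, $f^n=f^0$ for every $n$.

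\emph{A priori estimates.} Let $f\in\X[f^b]^{N_T+1}$ solve \eqref{schemeimp}, fix $n$ and an entropy generating function $\phi$. Multiplying the $K$-th equation of \eqref{schemeimp} by $\Delta t\,\phi'(h_K^{n+1})$, summing over $K\in\T$ and using \eqref{defflux}--\eqref{defdissip} gives
\[
\sum_{K\in\T} m(K)\,(f_K^{n+1}-f_K^{n})\,\phi'(h_K^{n+1}) \;+\; \Delta t\,\big(C_\phi^{n+1}+D_\phi^{n+1}\big) \;=\; 0 .
\]
Since $\eta$ and $\phi'$ are nondecreasing and $\eta(f^\infty)_K>0$, the function $f_K\mapsto e_{\phi,K}(f)$ is convex with derivative $\phi'(h_K)$, whence $e_{\phi,K}(f^{n+1})-e_{\phi,K}(f^{n})\le (f_K^{n+1}-f_K^{n})\,\phi'(h_K^{n+1})$; multiplying by $m(K)$ and summing gives $H_\phi^{n+1}-H_\phi^{n}\le -\Delta t\,(C_\phi^{n+1}+D_\phi^{n+1})$. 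Proposition~\ref{relentprop} then yields $C_\phi^{n+1}\ge0$ and $D_\phi^{n+1}\ge0$, which is exactly \eqref{decrentimplicit}. (For entropy functions defined only on $(0,\infty)$, e.g.\ $\phi_1$, one uses that the implicit iterates are strictly positive for $n\ge1$ because the boundary data is positive; the case $n=0$ remains licit as $e_{\phi,K}$ extends continuously to the initial data.) Telescoping \eqref{decrentimplicit} over the time index gives $H_\phi^{n}\le H_\phi^{0}$ for every entropy function $\phi$, and Lemma~\ref{stabilityh} converts this into~(ii); in particular $0\le f_\delta,\,h_\delta\le K_\infty$.

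\emph{Existence.} With $f^{n}$ frozen, a solution at step $n+1$ is a zero of the map $\Phi_1$ sending $(f_K)_{K\in\T}$ to $\big(m(K)(f_K-f_K^{n})/\Delta t+\sum_{\sigma\in\E[K]}\F[K](h)\big)_{K\in\T}$, where $h_K=\eta(f_K)/\eta(f^\infty)_K$ and $h_\sigma=1$. I would embed it in the homotopy $\Phi_\lambda$, $\lambda\in[0,1]$, obtained by putting a factor $\lambda$ in front of the flux sum. The entropy computation above applies verbatim with this factor (which only rescales $C_\phi$ and $D_\phi$), so every zero of $\Phi_\lambda$ satisfies the bounds of~(ii) uniformly in $\lambda$, hence stays in a fixed compact set on which $\eta^{-1}$, $h$ and the flux are well defined. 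As $\Phi_0$ has the unique zero $f=f^{n}$ with positive-definite Jacobian, homotopy invariance of the Brouwer degree produces a zero of $\Phi_1$; iterating over $n$ builds $f\in\X[f^b]^{N_T+1}$. The delicate point here is domain compatibility when $\eta$ is defined only on $\R_+$ (the porous-medium case): one has to check that the a priori bounds keep the iterates away from the boundary of the positive cone, possibly after regularizing the data.

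\emph{Uniqueness.} Let $f,\tilde f$ solve \eqref{schemeimp} with the same value at $t^{n}$, with associated ratios $h,\tilde h$. Subtracting the equations, multiplying the $K$-th one by $h_K^{n+1}-\tilde h_K^{n+1}$, summing over $K\in\T$ and integrating by parts discretely (legitimate because the flux \eqref{defflux} is conservative across interior edges) gives
\[
\frac{1}{\Delta t}\sum_{K\in\T} m(K)\,(f_K^{n+1}-\tilde f_K^{n+1})(h_K^{n+1}-\tilde h_K^{n+1}) \;+\; \mathcal R^{\mathrm{diss}} \;+\; \mathcal R^{\mathrm{conv}} \;=\; 0 .
\]
The first sum is $\ge0$ because $\eta$ is increasing and $\eta(f^\infty)_K>0$, and it vanishes only if $f^{n+1}=\tilde f^{n+1}$; the diffusive remainder equals $\sum_{\sigma\in\Ei}\tau_\sigma\,\eta(f^\infty)_\sigma\,\big(D_{K,\sigma}(h^{n+1}-\tilde h^{n+1})\big)^2$ plus nonnegative boundary terms (using $h_\sigma=\tilde h_\sigma=1$), hence $\ge0$; and $\mathcal R^{\mathrm{conv}}\ge0$ by a term-by-term estimate using the monotonicity of $g$ from \eqref{condg}, the upstream structure of \eqref{convflux}, and the identities $U^{\infty}_{L,\sigma}=-U^{\infty}_{K,\sigma}$, $\sum_{\sigma\in\E[K]} m(\sigma)U^{\infty}_{K,\sigma}=0$ from \eqref{stat}--\eqref{intcont}, in the same spirit as the inequality $C_\phi\ge C_\phi^{M_\phi}=0$ in the proof of Proposition~\ref{relentprop}. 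Hence all three terms vanish, so $f^{n+1}=\tilde f^{n+1}$, and uniqueness follows by induction on $n$. I expect the sign of $\mathcal R^{\mathrm{conv}}$ --- the discrete counterpart of the fact that transport by the divergence-free field $\bU^\infty$ with homogeneous boundary data on $h-1$ creates no mass --- to be the only genuinely computational step, everything else reducing to convexity and to the structural results already established.
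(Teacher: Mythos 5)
Your treatment of (ii) and (iii) is essentially the paper's own argument. For (iii) the paper expands $e_{\phi,K}^{n+1}-e_{\phi,K}^{n}$ by Taylor--Young around $f_K^{n+1}$ and uses that the remainder $-\tfrac12\psi_K(\theta_K^{n,n+1})(f_K^{n+1}-f_K^{n})^2$ has a favourable sign; your convexity inequality $e_{\phi,K}(f^{n+1})-e_{\phi,K}(f^{n})\le\phi'(h_K^{n+1})(f_K^{n+1}-f_K^{n})$ is exactly that step, and both proofs then invoke Proposition~\ref{relentprop} for the signs of $C_\phi^{n+1}$ and $D_\phi^{n+1}$, telescope, and apply Lemma~\ref{stabilityh} for (ii). Steady-state preservation via Lemma~\ref{preserve} and induction is as intended. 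The genuine divergence is in (i), which the paper does not prove (it refers to a fixed-point strategy in Eymard--Gallou\"et--Herbin): your existence proof by a homotopy on the flux, the $\lambda$-uniform entropy bounds, and Brouwer degree is correct and self-contained, and the domain issue you flag for $\eta(s)=s^m$ is harmless since the paper assumes $\eta$ is a smooth increasing bijection of $\R$.

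The soft spot is uniqueness. Writing $w_K=h_K^{n+1}-\tilde h_K^{n+1}$, the time term and the diffusive term behave as you say, but the sign of $\mathcal R^{\mathrm{conv}}$ is \emph{not} a term-by-term consequence of the monotonicity of $g$: it requires the global cancellation coming from \eqref{stat}--\eqref{intcont}. For the upwind choice $g(s,t)=s$ this works, because $U^{\infty,+}_{K,\sigma}w_K-U^{\infty,-}_{K,\sigma}w_L=\tfrac{1}{2}U^{\infty}_{K,\sigma}(w_K+w_L)+\tfrac12|U^{\infty}_{K,\sigma}|(w_K-w_L)$, the second part pairs with $w_K-w_L$ to give a nonnegative square and the first telescopes into $\sum_K w_K^2\sum_{\sigma\in\E[K]}m(\sigma)U^{\infty}_{K,\sigma}=0$ (the exterior edges being handled by $w_\sigma=0$ and $U^{\infty,+}_{K,\sigma}\ge U^{\infty}_{K,\sigma}/2$). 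For a general $g$ satisfying \eqref{condg}, however, the analogous splitting $g(h_K,h_L)-g(\tilde h_K,\tilde h_L)=\alpha_\sigma w_K-\beta_\sigma w_L$ produces edge- and solution-dependent coefficients, and the ``centered'' contribution $\tfrac12(\alpha_\sigma-\beta_\sigma)(w_K^2-w_L^2)$ no longer telescopes; there is no analogue here of the identity $\varphi'(s)=s\,\phi''(s)$ that makes $C_\phi^{M_\phi}$ vanish in Proposition~\ref{relentprop}. So your uniqueness argument is complete only for the upwind flux. A route that covers every $g$ in \eqref{condg}: the Jacobian of the implicit map has positive diagonal, nonpositive off-diagonal entries (by the monotonicity of $g$, of $\eta$, and of the two-point diffusion), and, by conservativity of the fluxes, column sums bounded below by $m(K)/\Delta t>0$; the averaged Jacobian along the segment joining two putative solutions is therefore a nonsingular M-matrix, forcing them to coincide.
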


    \begin{proof}
      (i): The existence of a unique solution to the implicit scheme can be shown with a fixed point strategy close to that in 
      \cite[Remark 4.9]{eymard_2000_finite} and we do not detail this part.\\
      (iii): Let us derive the entropy inequality.
      The Taylor-Young theorem provides the existence of $\theta_{K}^{n,n+1}\in(\min(f_K^n, f_K^{n+1}),\ \max(f_K^n, f_K^{n+1}))$ such that
      \[
      \begin{aligned}
	&e_{\phi, K}^{n+1} - e_{\phi, K}^n &=&&& \int_{f^n_K}^{f^{n+1}_K}\phi'\left(\frac{\eta(s)}{\eta(f^\infty_K)}\right)\dd s\\
	&&=&&& \phi'(h_K^{n+1})(f^{n+1}_K - f^n_K) - \frac 12\psi_K(\theta_{K}^{n,n+1})(f^{n+1}_K - f^n_K)^2\\
	&&=&&& -\frac{\Delta t}{m(K)} \phi'(h_K^{n+1})\sum_{\sigma\in\E[K]}F_{K,\sigma}^{n+1} -
	\frac 12\psi_K(\theta_{K}^{n,n+1})(f^{n+1}_K - f^n_K)^2,
      \end{aligned}
      \]
      where $\psi_K$ is given by 
      \begin{equation}
	\psi_K:x\mapsto\frac{\eta'(x)}{\eta(f_K^\infty)}\phi''\left(\frac{\eta(x)}{\eta(f_K^\infty)}\right).
	\label{psiK}
      \end{equation}
      Note that $\psi_K$ is a positive function thanks to the positive monotony of $\eta$ and $\phi'$. 
      From the definition of the dissipation in \eqref{defdissip}, it yields 
      \[
      \frac{H_{\phi}^{n+1} - H_{\phi}^{n}}{\Delta t}  + D_\phi^{n+1} \leq -C_\phi^{n+1} - \frac{1}{2\Delta t}\sum_{K\in\T}\psi_K(\theta_{K}^{n,n+1})(f^{n+1}_K - f^n_K)^2 m(K)\leq 0,
      \]
      where we applied Proposition~\ref{relentprop} to control the first term of the right-hand side.\\
      (ii): By summing \eqref{decrentimplicit} over $n$ and using the positivity of $D^n_\phi$, we obtain the boundedness of $H_\phi^n$ and we can use Lemma~\ref{stabilityh} to conclude. 
    \end{proof}
    
    \subsection{Explicit Euler}
    Before stating the main result on this scheme, let us introduce 
    \begin{equation}
      a_{K,\sigma}^n =\left\{
      \begin{aligned}
	&d_\sigma\left(U_{K,\sigma}^{\infty,+}\ \frac{h_K^n - g(h_K^n,h_L^n)}{D_{K,\sigma}h^n}\, +\, U_{K,\sigma}^{\infty,-}\ \frac{g(h_L^n,h_K^n) - h_K^n}{D_{K,\sigma}h^n} \right),&&\text{if}\ \sigma = K|L,\\
	&d_\sigma\left(U_{K,\sigma}^{\infty,+}\ \frac{h_K^n - g(h_K^n,h_\sigma)}{D_{K,\sigma}h^n}\,+\, U_{K,\sigma}^{\infty,-}\ \frac{g(h_\sigma,h_K^n) -h_K^n}{D_{K,\sigma}h^n} \right),&&\text{otherwise},
      \end{aligned}\right.
      \label{aKsig}
    \end{equation}
    with the convention $a_{K,\sigma}^n = 0$ if  $D_{K,\sigma} h^n = 0$.
    Then observe that we can use this $a_{K,\sigma}$ to reformulate the convective part
    of the scheme as a ``diffusive term'', thanks to the incompressibility of $U_{K,\sigma}^\infty$, which is a consequence of \eqref{stat}. Indeed, for all $K\in\T$, we have
    \[
    -\sum_{\sigma\in\E[K]}\tau_\sigma\ a_{K,\sigma}\ D_{K,\sigma}h = \sum_{\sigma\in\E[K]}\F[K]^\text{conv}  - h_K\sum_{\sigma\in\E[K]}m(\sigma)U_{K,\sigma}^\infty = \sum_{\sigma\in\E[K]}\F[K]^\text{conv},
    \]
    where we used \eqref{stat} in the last equality.
    Now we suppose that there is a positive constant $V_\infty$ that does not depend on $\delta>0$ and such that 
    \begin{equation}
      \max_{K\in\T}\max_{\sigma\in\E[K]} |U_{K,\sigma}^\infty|\,\leq\,V_\infty\,.
      \label{boundU}
      \tag{H7}
    \end{equation}
    Thanks to the monotonicity and regularity properties of $g$ from \eqref{condg}, one has
    \[
    0\leq a_{K,\sigma}\leq C_g\ V_\infty\ \text{diam}(\Omega),
    \]
    where $C_g$ is the Lipschitz constant of $g$ on $[0,\,\eta(K_\text{in})/m_\infty]^2$ and $\text{diam}(\Omega)$ is the diameter of 
    $\Omega$.
    \begin{rema}
      Hypothesis \eqref{boundU} is the most restrictive of our assumptions since it implicitly demand uniform $W^{1,\infty}$ bound on the discrete steady state $\eta(\finf)$. A more natural and similar hypothesis is a uniform $L^2$ control on the velocity (instead of $L^\infty$).
      It corresponds to discrete $H^1$ control on $\eta(\finf)$, uniformly in $\delta>0$, which can be easily obtained for a finite volume discretization of an elliptic equation like \eqref{stateq}. For the convergence analysis in Section~\ref{convergence}, this hypothesis follows from \eqref{cvstat}. Unfortunately, we need the stronger \eqref{boundU} to obtain $L^\infty$ stability for the explicit scheme.  
    \end{rema}
    \begin{theo}[Explicit Euler]
      Let $f \in\X[f^b]^{N_T+1}$ be defined by the scheme
      \eqref{schemeexp}-\eqref{dissflux}. 
      Then, under hypotheses \eqref{boundinit}-\eqref{condg}, \eqref{boundU} the following properties hold.

      \begin{itemize}
	\item[(i)] There exists a positive constant $C_{L^\infty}$ depending only on $K_\text{in}$, $M_\infty$, $V_\infty$, $g$, $\eta$ and $\Omega$ such that under the CFL condition 
	\[
	\max_{K\in\T}\frac{\Delta t}{m(K)}\sum_{\sigma\in\E[K]}\tau_\sigma\leq C_{L^\infty},
	\]
	there is a positive constant $K_\infty$ depending only on $K_\text{in}$, $\eta$, $m_\infty$ and $M_\infty$ such that for all $(t,x)\in[0,T)\times\R^{d}$,
	\[
	0\leq\,f_\delta(t,x),\ h_\delta(t,x)\, \leq\, K_\infty\,.
	\] 
	\item[(ii)] If $\Phi$ is a family of entropy functions with second derivate bounded between $m_\Phi$ and $M_\Phi$, then there exists a positive constant $C_\text{ent}$ depending only on 
	on $K_\text{in}$, $m_\infty$, $M_\infty$, $V_\infty$, $g$, $\eta$ and $\Omega$  such that for every $\varepsilon\in(0,1)$, under the CFL condition
	\begin{equation}
	  \max_{K\in\T}\frac{\Delta t}{m(K)}\sum_{\sigma\in\E[K]}\tau_\sigma\,\leq\, \min\left(C_{L^\infty},\,C_\text{ent}\;\frac{m_\Phi}{M_\Phi}\,\varepsilon\right)  \label{CFL}
	\end{equation}
	for any $\phi\in\Phi$ and  $n\in\{0,\dots,N_T-1\}$,
	\begin{equation}
	  \frac{H_{\phi}^{n+1} - H_{\phi}^{n}}{\Delta t}  + (1-\varepsilon)D_\phi^{n} \leq 0\quad \text{ and }\quad D_\phi^{n} \geq 0\,.
	  \label{decrentexplicit}
	\end{equation}
      \end{itemize}
      Moreover the scheme preserves the stationary state $f^\infty$.
      \label{mainexp}
    \end{theo}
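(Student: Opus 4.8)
The plan is to reproduce, step by step, the architecture of the proof of Theorem~\ref{mainimp}, the one genuine difference being that for the explicit discretization the quadratic Taylor remainder in the entropy identity carries the \emph{unfavourable} sign and must be absorbed into the dissipations --- this is exactly what the two CFL conditions buy us. Preservation of $\finf$ is immediate: if $\eta(f_K^n)=\eta(\finf)_K$ for every $K$ then $h^n\equiv1$, so Lemma~\ref{preserve} gives $\sum_{\sigma\in\E[K]}\F[K](h^n)=0$ and \eqref{schemeexp} yields $f^{n+1}=f^n$.

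For (i) I would rewrite the scheme as an update on $h$. By \eqref{stat} and the reformulation \eqref{aKsig} one has $\sum_{\sigma\in\E[K]}\F[K]^n=-\sum_{\sigma\in\E[K]}\tau_\sigma\,b_{K,\sigma}^n\,D_{K,\sigma}h^n$ with $b_{K,\sigma}^n:=a_{K,\sigma}^n+\eta(\finf)_\sigma$, which satisfies $m_\infty\le b_{K,\sigma}^n\le B$ for an explicit constant $B$ as soon as the components of $h^n$ lie in $[0,\eta(K_\text{in})/m_\infty]$, using \eqref{boundstat}, \eqref{condg}, \eqref{boundU} and the bound $0\le a_{K,\sigma}^n\le C_gV_\infty\,\text{diam}(\Omega)$. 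Applying the mean value theorem to $\eta$ between $f_K^n$ and $f_K^{n+1}$ and dividing \eqref{schemeexp} by $\eta(\finf)_K$ then exhibits $h_K^{n+1}$ as an affine combination, with coefficients summing to one, of $h_K^n$ and of the neighbouring nodal values (or the boundary value $1$): the off-diagonal coefficients are $\ge0$, and the diagonal one is $\ge0$ under a CFL condition of the announced type, since it only requires an a priori upper bound on $\eta'$ over the fixed interval $[0,K_\infty]$. As $0\le h^0\le\max(1,\eta(K_\text{in})/m_\infty)$ componentwise by \eqref{boundinit} and \eqref{boundstat}, this affine combination is in fact convex and a discrete maximum principle propagates the same componentwise bound to every $h^n$, whence $0\le f_\delta,h_\delta\le K_\infty$ with $K_\infty$ depending only on $K_\text{in},\eta,m_\infty,M_\infty$, as in Lemma~\ref{stabilityh}.

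Part (ii) then follows the computation of Theorem~\ref{mainimp}(iii). Taylor's formula in integral form (together with the positivity of $\psi_K$ from \eqref{psiK}) produces $\theta_K^n$ between $f_K^n$ and $f_K^{n+1}$ with $e_{\phi,K}^{n+1}-e_{\phi,K}^n=\phi'(h_K^n)(f_K^{n+1}-f_K^n)+\tfrac12\psi_K(\theta_K^n)(f_K^{n+1}-f_K^n)^2$; multiplying by $m(K)/\Delta t$, summing over $K$ and inserting \eqref{schemeexp} and \eqref{defdissip} gives
\[
\frac{H_\phi^{n+1}-H_\phi^n}{\Delta t}=-\,C_\phi^n-D_\phi^n+\frac{1}{2\Delta t}\sum_{K\in\T}m(K)\,\psi_K(\theta_K^n)\,(f_K^{n+1}-f_K^n)^2 .
\]
By Proposition~\ref{relentprop}(i), $C_\phi^n\ge0$, so it is enough to dominate the last sum by $\varepsilon\,D_\phi^n$. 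Using (i) to get $\theta_K^n\in[0,K_\infty]$ hence $\psi_K(\theta_K^n)\lesssim M_\Phi$, writing $m(K)(f_K^{n+1}-f_K^n)=\Delta t\sum_{\sigma\in\E[K]}\tau_\sigma b_{K,\sigma}^nD_{K,\sigma}h^n$, applying Cauchy--Schwarz with weights $\tau_\sigma b_{K,\sigma}^n$, trading $\Delta t\sum_\sigma\tau_\sigma/m(K)$ for the CFL quantity, and using the lower bound on $D_\phi^n$ obtained from \eqref{consitentdissip} by writing $D_{K,\sigma}h^n\,D_{K,\sigma}\phi'(h^n)=\phi''(\cdot)(D_{K,\sigma}h^n)^2\ge m_\Phi(D_{K,\sigma}h^n)^2$ together with $\eta(\finf)_\sigma\ge m_\infty$, one reaches a bound of the shape $\lesssim\frac{M_\Phi}{m_\Phi}\bigl(\max_K\Delta t\sum_\sigma\tau_\sigma/m(K)\bigr)D_\phi^n$. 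Choosing $C_\text{ent}$ accordingly, condition \eqref{CFL} makes this $\le\varepsilon\,D_\phi^n$, which is precisely \eqref{decrentexplicit}; non-negativity of $D_\phi^n$ is Proposition~\ref{relentprop}(ii).

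The step I expect to be the main obstacle is the $L^\infty$ bound (i): the intermediate point $\xi_K^n$ in the mean value theorem lies between $f_K^n$ and the not-yet-controlled $f_K^{n+1}$, so the CFL constant $C_{L^\infty}$ --- which must be manufactured from a bound on $\eta'$ over $[0,K_\infty]$ --- cannot be substituted in directly. I would break this circle by a bootstrap in the size of the timestep: since for the explicit scheme $f^{n+1}$ depends affinely on $\Delta t$, the set of $t\in[0,1]$ for which the state produced from $f^n$ with step $t\Delta t$ still has all its values in $[0,K_\infty]$ is open (by the convex-combination estimate, which pushes the values strictly into the interior), closed, and contains $t=0$, hence equals $[0,1]$; evaluating at $t=1$ legitimates the maximum-principle estimate. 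The remainder of the proof is routine bookkeeping of constants, the genuinely new ingredients beyond Theorem~\ref{mainimp} being the non-negativity of $C_\phi^n$ from Proposition~\ref{relentprop} and the comparison of the quadratic Taylor remainder with $D_\phi^n$.
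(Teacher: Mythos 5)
Your proposal follows the paper's route very closely. Part (ii) is essentially the paper's argument verbatim: Taylor expansion of $e_{\phi,K}$ around $f^n_K$ so that the quadratic remainder now carries the unfavourable sign, discarding $C^n_\phi\geq 0$ by Proposition~\ref{relentprop}, and absorbing the remainder into $\varepsilon D^n_\phi$ via Cauchy--Schwarz together with the lower bound $D^n_\phi\geq \tfrac{1}{2}m_\Phi m_\infty\sum_K\sum_\sigma\tau_\sigma(D_{K,\sigma}h^n)^2$ and the CFL condition. The convex-combination mechanism you use for (i), and the preservation of $\finf$ via Lemma~\ref{preserve}, are also exactly the paper's.

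The one point of divergence is the localization of the mean-value point (your $\xi_K^n$, the paper's $g_K^n$), and there your argument has a gap. The paper needs no continuity argument: since the scheme is explicit, $f^{n+1}_K = f^n_K + \frac{\Delta t}{m(K)}\sum_{\sigma}\tau_\sigma\,(a_{K,\sigma}+\eta(f^\infty_K))\,D_{K,\sigma}h^n$ is an explicit function of the already-controlled $h^n$, so a first CFL condition gives directly $f^{n+1}_K\in I_\delta=[-\delta,\,K_{\text{in}}+\delta]$ \emph{before} any maximum principle is invoked; the mean-value point then lies in $I_\delta$, where $\eta'\leq M_\eta^\delta$, and the convex-combination step is run with that constant. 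Your open-closed bootstrap instead justifies openness of the good set by asserting that the convex-combination estimate ``pushes the values strictly into the interior'' of the target interval. It does not: a convex combination of values at the endpoint of $J$ is still at the endpoint, so if at the parameter $t_0$ some component sits exactly on the boundary of $[0,K_\infty]$, nothing prevents it from exiting for $t$ slightly larger than $t_0$, and the set as you define it need not be open. To repair the bootstrap one must enlarge the target interval (assume membership in $[-\delta,K_\infty+\delta]$, deduce membership in $[0,K_\infty]$, hence strict interiority) --- which is precisely the paper's $I_\delta$ device, at which point the topological argument becomes superfluous and the direct two-step estimate suffices. The remainder of your proof is correct.
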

    \begin{proof}
      
      (i): For explicit time discretization it is rather classical (see \cite{eymard_2000_finite}) to use the convexity property of the scheme to show $L^\infty $ stability. However due to the fact that flux are expressed in terms of $h$ instead of $f$ adds up some technicalities here. We have to proceed in two steps to show the heredity of the induction hypothesis $h^n\in J = [0,\eta(K_\text{in})/m_\infty]$.
      
      \smallskip
      
      First note that for any $\delta\geq 0$, under the CFL condition
      \[
      \max_{K\in\T}\frac{\Delta t}{m(K)}\sum_{\sigma\in\E[K]}\tau_\sigma\,\leq\, C_1(\delta) := \frac{\delta}{K_\text{in} \;(C_g\ V_\infty\ \text{diam}(\Omega) + M_\infty)}\,,
      \]
      one has $f^{n+1}_K\in I_\delta = [-\delta,\,K_\text{in} + \delta]$ for all control volume $K\in\T$ since the scheme rewrites 
      \[
      f_K^{n+1} = f_K^{n} + \frac{\Delta t}{m(K)}\sum_{\sigma\in\E[K]}\tau_\sigma(a_{K,\sigma} + \eta(f_K^\infty))\;D_{K,\sigma}h^n\,.
      \]
      Let us define $M_\eta^\delta = \sup_{s\in I_\delta} \eta'(s)$. Now we show that under a possibly more restrictive CFL condition, $h_K^n$ belongs to $J$.
      By the mean value theorem, there exists $g_K^n\in I_\delta$ such that
      \[
      h_K^{n+1} - h_K^{n} = \frac{\eta'(g_K^n)}{\eta(f_K^\infty)}(f_K^{n+1} - f_K^{n}).
      \]
      The scheme can then be rewritten as
      \[
      \begin{aligned}
	&h_K^{n+1} &=&&& \left(1 - \frac{\eta'(g_K^n)\;\Delta t}{\eta(f_K^\infty)\;m(K)}\sum_{\sigma\in\E[K]}\tau_\sigma(a_{K,\sigma} + \eta(f_K^\infty)) \right)h_K^n\\
	&&+&&& \frac{\eta'(g_K^n)\;\Delta t}{\eta(f_K^\infty)\;m(K)}\sum_{\substack{\sigma\in\Ei[,K]\\ \sigma = K|L}}\tau_\sigma(a_{K,\sigma} + \eta(f_K^\infty))\; h_L^n\\ 
	&&+&&&\frac{\eta'(g_K^n)\;\Delta t}{\eta(f_K^\infty)\;m(K)}\sum_{\substack{\sigma\in\Ee[,K]}}\tau_\sigma(a_{K,\sigma} + \eta(f_K^\infty))\; h_\sigma^n\,.\\
      \end{aligned}
      \]
      Under the CFL condition
      \[
      \max_{K\in\T}\frac{\Delta t}{m(K)}\sum_{\sigma\in\E[K]}\tau_\sigma\,\leq\, C_2(\delta) :=
      \frac{m_\infty}{M_\eta^\delta \;(C_g\ V_\infty\ \text{diam}(\Omega) + M_\infty)},
      \]
      it provides $h_K^{n+1}$ as a convex combination of elements of $J$ and hence $\{h_K^{n+1}\}_{K\in\T}\subseteq J$. 
      The CFL constant $C_{L^\infty}$ can then be taken as the supremum of $\min(C_1(\delta),C_2(\delta))$ when $\delta>0$.
      
      \bigskip
      
      (ii): We proceed exactly as in the proof of Theorem \ref{mainimp} to get the existence of $\theta_{K}^{n,n+1}$ such that
      \begin{equation}
	\frac{H_{\phi}^{n+1} - H_{\phi}^{n}}{\Delta t}  + D_\phi^{n} \leq -C_\phi^{n} + \frac{1}{2\Delta t}\sum_{K\in\T}\psi_K(\theta_{K}^{n,n+1})(f^{n+1}_K - f^n_K)^2 m(K),
	\label{remainder}
      \end{equation}
      for $\psi_K$ defined by \eqref{psiK}. Note that the sign of the last term has changed compared to the implicit scheme.
      Using the scheme, the last term in \eqref{remainder} can be estimated with the Cauchy-Schwartz inequality as 
      
      \begin{multline*}
	\frac{\Delta t }{2}\sum_{K\in\T}\frac{1}{m(K)}\psi_K(\theta_{K}^{n,n+1} )\left(\sum_{\sigma\in\E[K]}\F[K]^n\right)^2
	\leq\\  \frac{\Delta t\,M_\Phi\, M_\eta^0\,(C_g\,V_\infty\,\text{diam}(\Omega) + M_\infty)^2}{m_\infty}\sum_{K\in\T}\frac{1}{m(K)}\left(\sum_{\sigma\in\E[K]}\tau_\sigma\right)\,\left(\sum_{\sigma\in\E[K]}\tau_\sigma\left(D_{K,\sigma}h^n\right)^2\right),
      \end{multline*}
      Then, using that 
      \[
      D^{n}_\phi\geq\frac{m_\Phi\;m_\infty}{2}\sum_{K\in \T}\sum_{\sigma\in\E[K]}\tau_\sigma\; \left(D_{K,\sigma}h^n\right)^2,
      \]
      it yields \eqref{decrentexplicit} provided that the CFL condition is satisfied with constant
      \[
      C_\text{ent} = \frac{m_\infty^2}{2\;M_\eta^0\;(C_g\ V_\infty\ \text{diam}(\Omega) + M_\infty)^2}.
      \]
    \end{proof}

    \subsection{Long time behavior}
    \label{longtime}
    In this section, we study the long-time behavior of the discrete solution in the linear case $\eta(s) = s$.
    We prove exponential decay to equilibrium with the following strategy. From a classical discrete Poincaré inequality, we establish a discrete Poincaré-Sobolev inequality for controlling $H_{\phi_2}$ by $D_{\phi_2}$, where we recall that  $\phi_2(s) = (s-1)^2$. Then,
    properties \eqref{decrentimplicit} and \eqref{decrentexplicit} provide exponential decay to equilibrium by a discrete Gronwall-type argument for $H_{\phi_2}$ and all relative $\phi$-entropies controlled by the latter. 
    \begin{rema}
      The question of the existence of a general $\phi$-Poincaré-Sobolev functional inequality
      for any entropy generating function $\phi$, even in the continuous setting, goes way beyond the scope of this paper and we refer to  \cite{bodineau_2014_lyapunov, convex_2000_arnold} for discussions on this matter.
    \end{rema}

    The $2$-entropy and its dissipation are closely related to, respectively, the $L^2$ norm
    \[
    \|u\|_{0,2}\ =\ \left( \sum_{K\in\T}m(K)\; \left|u_K\right|^2\right)^{1/2}\,,
    \]
    and the discrete $H^1$ semi-norm 
    \begin{equation}
      |u|_{1,2,\T}\ =\ \left(\sum_{\substack{\sigma\in\Ei\\ \sigma= K|L}}\tau_\sigma\; \left|D_{K,\sigma}u\right|^2+\sum_{K\in\T}\sum_{\sigma\in\Ee[,K]}\tau_\sigma\; \left|D_{K,\sigma}u\right|^2\right)^{1/2}\,,
      \label{H1seminorm}
    \end{equation}
    for which M. Bessemoulin-Chatard, C. Chainais-Hillairet and F. Filbet proved the following discrete Poincaré inequality
    in \cite[Theorem 6]{bessemoulin_2014_discrete}, based on older work that may be found in references therein. Before stating it, we need to introduce the following regularity constraint on the mesh. There is a positive constant $\xi_1$ that does not depend on $\D$ such that
    \begin{equation}
      \forall K\in\T,\ \forall \sigma\in\E[K],\quad d_{K,\sigma}\geq \xi_1\, d_\sigma
      \label{regmesh}
      \tag{H8}
    \end{equation}

    \begin{prop}[\cite{bessemoulin_2014_discrete}]
      Under hypothesis \eqref{regmesh}, there exists a constant $C_{P}$ only depending on $\Omega$ such that for all $u\in \X[0]$, it holds
      \[
      \|u\|_{0,2}\ \leq\ \frac{C_P}{\xi_1^{1/2}}|u|_{1,2,\T}\,.
      \]
      \label{Poincare}
    \end{prop}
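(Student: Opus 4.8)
This is the standard discrete Friedrichs inequality for two--point flux finite volume meshes, established in \cite{bessemoulin_2014_discrete}; the plan is to recall the classical argument, which also makes transparent why the mesh regularity \eqref{regmesh} is the relevant hypothesis. First I would extend $u$ by $0$ on $\R^d\setminus\bar\Omega$, which is consistent with $u\in\X[0]$, and fix a coordinate direction $\be$. For almost every $\bx\in\Omega$, I follow the straight ray issued from $\bx$ in the direction $-\be$; since $\Omega$ is bounded the ray meets only finitely many control volumes before leaving $\Omega$, and it crosses the edges $\sigma\in\E$ separating them. Writing $u(\bx)$ for the value of $u$ on the cell containing $\bx$ and using that $u$ vanishes on the boundary edge through which the ray exits, a telescoping sum along the ray gives the pointwise estimate
\[
|u(\bx)|\ \le\ \sum_{\sigma\in\E}\chi_\sigma(\bx)\,\bigl|D_{K_\sigma,\sigma}u\bigr|\,,
\]
where $\chi_\sigma(\bx)\in\{0,1\}$ indicates that $\sigma$ is crossed by the ray and $D_{K_\sigma,\sigma}u$ denotes the jump of $u$ across $\sigma$, oriented along $-\be$ (for a boundary edge the exterior value is $0$, so this jump equals the neighbouring cell value).

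Next I would apply the Cauchy--Schwarz inequality to the right--hand side, splitting $|D_{K_\sigma,\sigma}u|$ so as to bring in the transmissibilities: with $\gamma_\sigma=|\bn_{K_\sigma,\sigma}\cdot\be|\in(0,1]$ the cosine of the angle between $\be$ and the normal to $\sigma$ (and noting $\chi_\sigma(\bx)=0$ whenever $\gamma_\sigma=0$), one obtains
\[
|u(\bx)|^2\ \le\ \Bigl(\sum_{\sigma\in\E}\chi_\sigma(\bx)\,d_\sigma\,\gamma_\sigma\Bigr)\ \Bigl(\sum_{\sigma\in\E}\chi_\sigma(\bx)\,\frac{1}{d_\sigma\,\gamma_\sigma}\,\bigl|D_{K_\sigma,\sigma}u\bigr|^2\Bigr)\,.
\]
By the orthogonality condition \eqref{orthog}, $d_\sigma\gamma_\sigma$ is the length of the projection onto $\be$ of the vector joining the two cell centres separated by $\sigma$; these projections essentially cover the projection of the ray onto $\be$, and \eqref{regmesh} is precisely what prevents the ray from collecting many edges without making net progress towards the boundary, so the first factor is bounded by $C_d\,\mathrm{diam}(\Omega)/\xi_1$ uniformly in $\bx$. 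Integrating over $\bx\in\Omega$ and exchanging the order of summation, I then use that the set $\{\bx\in\Omega:\chi_\sigma(\bx)=1\}$ is contained in the cylinder of axis $\be$ over $\sigma$, hence has measure at most $m(\sigma)\,\gamma_\sigma\,\mathrm{diam}(\Omega)$. The factors $\gamma_\sigma$ cancel and, since $m(\sigma)/d_\sigma=\tau_\sigma$, this yields
\[
\|u\|_{0,2}^2\ \le\ \frac{C_d\,\mathrm{diam}(\Omega)^2}{\xi_1}\sum_{\sigma\in\E}\tau_\sigma\,\bigl|D_{K_\sigma,\sigma}u\bigr|^2\ =\ \frac{C_d\,\mathrm{diam}(\Omega)^2}{\xi_1}\,|u|_{1,2,\T}^2\,,
\]
which is the claimed inequality with $C_P=\sqrt{C_d}\,\mathrm{diam}(\Omega)$ depending only on $\Omega$.

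The routine ingredients are the telescoping identity and the two Cauchy--Schwarz/Fubini manipulations; the genuinely delicate point, and the one where \eqref{regmesh} is used, is the uniform geometric bound $\sup_{\bx}\sum_{\sigma}\chi_\sigma(\bx)\,d_\sigma\gamma_\sigma\lesssim\mathrm{diam}(\Omega)/\xi_1$, together with the book-keeping needed to make the cylinder/measure estimate rigorous — in particular for a non-convex $\Omega$, where a ray may leave and re-enter the domain, which is harmless since the extra jumps across boundary edges still involve the zero exterior value and are already counted in $|u|_{1,2,\T}$. For these details I would simply refer to \cite{bessemoulin_2014_discrete}.
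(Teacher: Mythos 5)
The paper never proves this proposition — it is imported verbatim from \cite{bessemoulin_2014_discrete} (their Theorem 6) — so there is no internal argument to compare against, only the citation. Your sketch is the standard telescoping/chord argument behind that reference: the pointwise bound $|u(\bx)|\le\sum_\sigma\chi_\sigma(\bx)|D_{K_\sigma,\sigma}u|$, the weighted Cauchy--Schwarz step, and the cylinder measure estimate $m(\{\chi_\sigma=1\})\le m(\sigma)\gamma_\sigma\,\mathrm{diam}(\Omega)$ are all correct, you correctly identify the one genuinely delicate point (the uniform bound on $\sum_\sigma\chi_\sigma(\bx)\,d_\sigma\gamma_\sigma$, which is where \eqref{regmesh} and hence the factor $\xi_1^{-1/2}$ enter), and deferring that geometric bookkeeping to \cite{bessemoulin_2014_discrete} is consistent with how the paper itself treats the result.
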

    As a consequence we have the following chain of inequalities. 
    For any $h\in\X[1]$
    \begin{equation}
      H_{\phi_2}(h)\,\leq\,M_\infty\,\|h-1\|_{0,2}^2\,\leq\,\frac{C_P^2\,M_\infty}{\xi_1}|h|_{1,2,\T}^2\,\leq\,\frac{C_P^2\,M_\infty}{\xi_1\,m_\infty}D_{\phi_2}(h)\,.
      \label{phiPoincare}
    \end{equation}
    \begin{theo}(Exponential return to equilibrium)~\\
      We suppose that $\eta$ is the identity function, and that the assumptions of Theorem \ref{mainimp} (respectively those of Theorem \ref{mainexp} as well as the CFL condition \eqref{CFL} for $\Phi = \{\phi_2\}$) and \eqref{regmesh} are satisfied. In the implicit case, we also assume
      that $\Delta t$ is smaller that a given constant $k$.
      Then, the solution $f$ of the scheme \eqref{schemeimp},\eqref{defflux}-\eqref{dissflux} (respectively \eqref{defflux}-\eqref{dissflux}) is such that there are two positive constants, $\kappa$ depending only on $\Omega$, $m_\infty$, $M_\infty$, $\xi_1$,  and $C_\text{in}$ depending additionally on $H_{\phi_2}^0$ (and $k$ in the implicit case)  such that for all $n\in\{0,\dots,N_T-1\}$,
      \[
      H_{\phi_2}^n\leq C_\text{in}\ e^{-\kappa t^n}\,.
      \]
      As a consequence, for all $n\in\{0,\dots,N_T-1\}$, it holds
      \[
      \|f_\delta(t^n,\cdot)-f^\infty_\delta(\cdot)\|_{L^1(\Omega)}^2\leq C_\text{in}\ e^{-\kappa t^n}.
      \]
      \label{longtimetheo}
    \end{theo}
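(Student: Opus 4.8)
The plan is to combine the discrete entropy--entropy dissipation inequality \eqref{decrentimplicit} (resp.\ \eqref{decrentexplicit}) for the particular entropy $\phi_2$ with the discrete Poincar\'e--Sobolev chain \eqref{phiPoincare}, and then run a discrete Gr\"onwall argument. Set $\lambda = \xi_1\,m_\infty/(C_P^2\,M_\infty)$, so that \eqref{phiPoincare} reads $D_{\phi_2}(h)\geq \lambda\,H_{\phi_2}(h)$. First I would treat the implicit case: from \eqref{decrentimplicit} with $\phi=\phi_2$ we get
\[
\frac{H_{\phi_2}^{n+1}-H_{\phi_2}^{n}}{\Delta t} \leq -D_{\phi_2}^{n+1} \leq -\lambda\,H_{\phi_2}^{n+1},
\]
hence $H_{\phi_2}^{n+1}\leq (1+\lambda\Delta t)^{-1}H_{\phi_2}^{n}$, and by induction $H_{\phi_2}^{n}\leq (1+\lambda\Delta t)^{-n}H_{\phi_2}^{0}$. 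Using $\log(1+x)\geq x/(1+x)$ and $\Delta t\leq k$ gives $(1+\lambda\Delta t)^{-n}\leq e^{-\kappa t^n}$ with, say, $\kappa = \lambda/(1+\lambda k)$ (absorbing any constant into $C_{\mathrm{in}}$ if one prefers a cleaner $\kappa$). For the explicit case, \eqref{decrentexplicit} with $\phi=\phi_2$ gives $H_{\phi_2}^{n+1}\leq (1-(1-\varepsilon)\lambda\Delta t)\,H_{\phi_2}^{n}$; the CFL condition \eqref{CFL} (with $\Phi=\{\phi_2\}$, so $m_\Phi=M_\Phi=2$) together with \eqref{regmesh} ensures $(1-\varepsilon)\lambda\Delta t<1$, so the factor is in $(0,1)$ and one concludes $H_{\phi_2}^{n}\leq (1-(1-\varepsilon)\lambda\Delta t)^{n}H_{\phi_2}^{0}\leq e^{-\kappa t^n}H_{\phi_2}^{0}$ using $1-x\leq e^{-x}$, with $\kappa = (1-\varepsilon)\lambda$. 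In both cases $\kappa$ depends only on $\Omega$, $m_\infty$, $M_\infty$, $\xi_1$ (and $\varepsilon$, which we may fix, e.g.\ $\varepsilon=1/2$, once and for all), and $C_{\mathrm{in}}$ depends additionally on $H_{\phi_2}^{0}$ and, in the implicit case, on $k$.

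For the final $L^1$ statement, I would invoke a Csisz\'ar--Kullback--Pinsker type inequality. Since $\eta$ is the identity, $h_K = f_K/f^\infty_K$ and $H_{\phi_2}(h) = \sum_K m(K)\,f^\infty_K\,(h_K-1)^2 = \sum_K m(K)\,(f_K - f^\infty_K)^2/f^\infty_K$. By Cauchy--Schwarz,
\[
\|f_\delta(t^n,\cdot)-f^\infty_\delta(\cdot)\|_{L^1(\Omega)} = \sum_K m(K)\,|f_K^n - f^\infty_K| \leq \Big(\sum_K m(K)\,f^\infty_K\Big)^{1/2}\Big(\sum_K m(K)\,\frac{(f_K^n-f^\infty_K)^2}{f^\infty_K}\Big)^{1/2},
\]
and $\sum_K m(K)\,f^\infty_K \leq M_\infty\,|\Omega|$ (recall $\eta=\mathrm{id}$, so $\eta(f^\infty)_K = f^\infty_K \leq M_\infty$ by \eqref{boundstat}). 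Squaring yields $\|f_\delta(t^n,\cdot)-f^\infty_\delta\|_{L^1}^2 \leq M_\infty|\Omega|\,H_{\phi_2}^n \leq M_\infty|\Omega|\,C_{\mathrm{in}}\,e^{-\kappa t^n}$, which is of the claimed form after renaming the constant $C_{\mathrm{in}}$.

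I do not anticipate a genuine obstacle here: the heavy lifting has already been done in establishing \eqref{decrentimplicit}--\eqref{decrentexplicit} and the functional inequality \eqref{phiPoincare}. The only mildly delicate points are bookkeeping ones: making sure the CFL hypothesis \eqref{CFL} specialized to $\Phi=\{\phi_2\}$ is exactly what is assumed in the statement (it is), checking that the discrete Poincar\'e inequality of Proposition~\ref{Poincare} applies to $h-1\in\X[0]$ (it does, since $h_\sigma=1$ on exterior edges by the definition of $\X[1]$), and, in the implicit case, verifying that the passage from the geometric factor $(1+\lambda\Delta t)^{-n}$ to an exponential $e^{-\kappa t^n}$ is uniform in $\Delta t$ on $(0,k]$ --- which is precisely why the bound $\Delta t\leq k$ is imposed and why $C_{\mathrm{in}}$ (or $\kappa$) is allowed to depend on $k$. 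None of these requires more than a line of justification.
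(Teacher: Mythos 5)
Your proposal is correct and follows essentially the same route as the paper: combine \eqref{phiPoincare} with \eqref{decrentimplicit} (resp.\ \eqref{decrentexplicit}) for $\phi_2$, iterate the resulting geometric contraction, and conclude the $L^1$ bound by the same Cauchy--Schwarz argument. The only cosmetic difference is in the implicit case, where you place the $k$-dependence in $\kappa$ via $\kappa=\lambda/(1+\lambda k)$ while the paper keeps $\kappa=\lambda$ and pushes the correction into $C_{\mathrm{in}}$; both are legitimate ways of handling the passage from $(1+\lambda\Delta t)^{-n}$ to $e^{-\kappa t^n}$.
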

    \begin{proof}
      By combining \eqref{phiPoincare} and \eqref{decrentimplicit} (respectively \eqref{decrentexplicit}), we obtain
      $H^{n+1}_{\phi_2} \leq (1+\kappa\Delta t)^{-1}\,H^n_{\phi_2}$ in the implicit case with $\kappa =  \xi_1\,m_\infty/(C_P^2\,M_\infty)$ and $H^{n+1}_{\phi_2} \leq (1-\kappa\Delta t)\,H^n_{\phi_2}$ in the explicit case with $\kappa = (1-\varepsilon)\xi_1\,m_\infty/(C_P^2\,M_\infty)$ where $\varepsilon$ is in the CFL condition \eqref{CFL}. Hence we get exponential decay of  the $2$-entropy, with $C_\text{in} = H_{\phi_2}^0$ in the explicit case and $C_\text{in} = H_{\phi_2}^0\,\exp(\kappa^2k^2/2)$ in the implicit case. Finally,
      the estimate in $L^1$ is obtained from a Cauchy-Schwartz inequality
      \[
      \|f_\delta(t^n)-f^\infty_\delta\|_{L^1(\Omega)}^2\leq \|f^\infty_\delta\|_{L^1(\Omega)}\|f_\delta(t^n)/\sqrt{f^\infty_\delta}-\sqrt{f^\infty_\delta}\|_{L^2(\Omega)}^2 = \|f^\infty_\delta\|_{L^1(\Omega)}\,H_{\phi_2}^n\,,
      \]
      and the decay of $H_{\phi_2}$.
      
    \end{proof}
    \begin{rema}
      The exponential decay holds for many other $\phi$-entropies. For instance, let us restrict our class of entropy generating functions to that introduced by
      Arnold, Markowich, Toscani and Unterreiter in \cite{convex_2000_arnold}, that is those satisfying
      $\phi\in\mathcal{C}^4(\R_+)$ and
      \[
      \left(\phi^{'''}\right)^2\ \leq\ \frac{1}{2}\phi''\ \phi^{IV}.
      \]
      Mark that the physical and $p$-entropies are generated by these entropy functions. 
      As a consequence of \cite[Lemma 2.6]{convex_2000_arnold}, one has that any such $\phi$ is bounded
      from above by a quadratic entropy function, namely
      \[
      \frac{\phi(s)}{\phi''(1)}\ \leq\ \phi_2(s) = (s-1)^2.
      \]
      Therefore, since the same inequality holds for the corresponding relative entropies, namely $H_\phi\leq \phi''(1)H_{\phi_2}$, 
      the exponential decay holds for any relative $\phi$-entropy in this class.
    \end{rema}

    \section{Convergence}\label{convergence}
    In this section, we prove that when $\delta\rightarrow 0$, the sequence $(h_\delta)_{\delta>0}$ converges to a solution of \eqref{contprob2} in the following sense.
    \begin{defi}
      We say that $h$ is a solution of \eqref{contprob2} starting at initial data $f^\text{in}$  if
      \begin{itemize}
	\item $h-1\in L^2(0,T;\,H^1_0(\Omega))$
	\item $f = \eta^{-1}(\eta(f^\infty)h)\in L^1(\Omega_T)$
	\item For any test function $\psi\in\mathcal{C}^\infty_c(\Omega_T)$, it holds
	\[
	-\iint_{\Omega_T}\left[f\,\partial_t\psi +
	\left(\bU^\infty\,h -\eta(f^\infty)\,\nabla h\right)\cdot \nabla \psi\right]
	- \int_\Omega f^\text{in}\,\psi(0,\cdot)\ =\ 0\,.  \]
      \end{itemize}
      \label{defsol}
    \end{defi}
    Thanks to the reformulation of Equation~\eqref{contprob} into \eqref{contprob2}, the only nonlinearity lies in the first term.
    In order to prove convergence of the scheme we need strong compactness to recover pointwise convergence and identify 
    the limit of $(f_\delta)_\delta$ as $\eta^{-1}(\eta(f^\infty)h)$ with $h$ the limit of $(h_\delta)_\delta$. 
    
    Except for this originality, the strategy is fairly standard. We first derive uniform-in-$\delta$ estimates on $h_\delta$ and its (discrete) gradient in $\bx$
    in order to get compactness in the space variable. Then thanks to the structure of the equation compactness in time is obtained from previous estimates.
    Using the consistency we shall then take limits in our scheme and recover a global weak solution in the sense of Definition~\ref{defsol}.
    \subsection{Hypotheses}
    We discuss specific hypotheses needed for the convergence result to hold. While, it was not crucial until now, the orthogonality condition \eqref{orthog} is very important in this part, for consistency of discrete gradients. As in \cite{chainais_2003_finite, chainais_2004_finite, bessemoulin_2012_finite}, the latter are introduced on a dual mesh. For $\sigma = K|L$, we define $T_{\sigma}$ as the cell with vertices $\bx_K$, $\bx_L$ and those of $\sigma$. If $\sigma\in \Ee$ then $T_{\sigma}$ is the cell with vertices $\bx_K$ and those of $\sigma$, where $K$ is the only cell having $\sigma\in\E[K]$. We refer to \cite[Fig. 1]{chainais_2004_finite} for a visualization. On this dual mesh we set
    \[
    \nabla^\delta h_\delta(t,\bx) = 
    \left\{\begin{aligned}
      &\frac{m(\sigma)}{m(T_\sigma)}\,D_{K,\sigma}h^{n+1}\,\bn_{K,\sigma},&&\text{ if }\bx\in T_\sigma\text{ and }t\in[t^n,t^{n+1})&& \text{(Implicit case)}\\
      &\frac{m(\sigma)}{m(T_\sigma)}\,D_{K,\sigma}h^{n}\,\bn_{K,\sigma},&&\text{ if }\bx\in T_\sigma\text{ and }t\in[t^n,t^{n+1})&&\text{(Explicit case)}
    \end{aligned}
    \right.
    \]
    \begin{rema}
      The discrete gradients are well-defined since for $u\in\X[u_b]$ and $\sigma = K|L$, the product $D_{K,\sigma}u\ \bn_{K,\sigma}$ is independent of the cell $K$. For $\sigma\in\Ee$, there is no ambiguity since only one cell is sharing the edge.
    \end{rema}
    In order to obtain convergence of these gradients we need the following regularity hypothesis on the mesh (see \cite{chainais_2004_finite} and references therein for related comments). There exist a constant $\xi_2>1$ that does not depend on the discretization such that 
    \begin{equation}
      \forall \sigma\in\E,\quad m(T_\sigma)\,\leq m(\sigma)d_\sigma\,\leq\, \xi_2\,m(T_\sigma)\,.
      \label{regmesh2}
      \tag{H9}
    \end{equation}
    Since our scheme is based on a discretization of the stationary equation \eqref{stateq}, one needs the convergence of the latter to obtain that of the former. Therefore we assume that both the discrete steady state and the discrete velocity converge to their continuous counterparts. First, let us define them on the dual mesh, by
    \[
    \eta(f^\infty)_\delta(\bx)\,=\,\eta(f^\infty)_\sigma\quad\text{for }\ \bx\in T_\sigma\,,
    \]
    and
    \[
    \bU_\delta^\infty(\bx)\,=\,\frac{m(\sigma)d_\sigma}{m(T_\sigma)}\,U_{K,\sigma}^\infty\,\bn_{K,\sigma}\quad\text{for }\ \bx\in T_\sigma\,.
    \]
    Then, when $\delta\rightarrow 0$, we assume that
    \begin{equation}
      \left\{
      \begin{aligned}
	\eta(\finf)_\delta&&&\longrightarrow&&\eta(\finf)&\text{strongly in } L^2(\Omega)\\
	\bU_\delta^\infty&&&\longrightarrow&& \bU^\infty&\text{weakly in } L^2(\Omega)
      \end{aligned}\right.
      \label{cvstat}
      \tag{H10}
    \end{equation}
    \begin{rema}
      Hypothesis \eqref{cvstat} is very natural. Indeed, since $\eta(f^\infty)$ satisfies a linear non-degenerate second order elliptic equation, one expects to get uniform $H^1$ control on discrete solutions of any reasonable finite volume scheme. 
    \end{rema}
    
    In the rest of Section~\ref{convergence}, all results are stated for the implicit scheme. However, everything holds also for the explicit scheme with minor modifications provided that one adds hypothesis \eqref{boundU} and the CFL condition \eqref{CFL} with $\Phi = \{\phi_2\}$ to each proposition.

    \subsection{Compactness}
    In the following proposition, we provide the uniform $L^\infty$ and $L^2(0,T;H^1)$ estimates we need for compactness.
    The discrete $L^2(0,T;H^1)$ norm is defined by 
    \[
    \|h_\delta\|_{1,2,\D}\, =\, \left(\sum_{n=0}^{N_T-1}\Delta t\,|h^{n+1}|_{1,2,\T}^2\right)^{1/2}
    \]
    for the implicit scheme.
    \begin{prop}(Uniform estimates)
      Under hypotheses \eqref{boundinit}-\eqref{orthog} and \eqref{regmesh2}-\eqref{cvstat}, there are positive constants $K_\infty$ and $K_{1,2}$ depending only on $K_\text{in}$, $\eta$, $m_\infty$ and $M_\infty$ such that  the solution of the scheme \eqref{schemeimp} together with \eqref{defflux}-\eqref{dissflux} satisfies 
      \[
      0\,\leq\,f_\delta(t,\bx),\,h_\delta(t,\bx)\, \leq\, K_\infty\,,
      \]
      and 
      \[
      \|h_\delta\|_{1,2,\D}\,\leq\, K_{1,2}.
      \]
      \label{estim}
    \end{prop}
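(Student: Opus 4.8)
The $L^\infty$ bound has already been established in Theorem~\ref{mainimp}(ii) (and in the explicit case follows from Theorem~\ref{mainexp}(i) under the added CFL condition), so the only new content is the uniform discrete $L^2(0,T;H^1)$ estimate on $h_\delta$, with a constant that does not degenerate as $\delta\to 0$. The plan is to extract this from the entropy dissipation inequality for the quadratic entropy $\phi_2(s) = (s-1)^2$. Recall from Theorem~\ref{mainimp}(iii) that
\[
\frac{H_{\phi_2}^{n+1} - H_{\phi_2}^{n}}{\Delta t} + D_{\phi_2}^{n+1} \leq 0,
\]
so summing over $n\in\{0,\dots,N_T-1\}$ telescopes to
\[
\sum_{n=0}^{N_T-1}\Delta t\, D_{\phi_2}^{n+1} \leq H_{\phi_2}^0 - H_{\phi_2}^{N_T} \leq H_{\phi_2}^0.
\]
The first task is therefore to bound $H_{\phi_2}^0$ uniformly in $\delta$: since $H_{\phi_2}(h^0) = \sum_K m(K)\, e_{\phi_2,K}(f^0)$ and $e_{\phi_2,K}(f^0) = \int_{f^\infty_K}^{f^0_K}\phi_2'(\eta(s)/\eta(f^\infty_K))\,\dd s$, one estimates $|e_{\phi_2,K}(f^0)|$ using $0\leq f^0_K\leq K_\text{in}$ from \eqref{boundinit}, the bound \eqref{boundstat} on $\eta(f^\infty)_K$, and local Lipschitz control on $\eta$ over $[0,K_\text{in}]$, giving $H_{\phi_2}^0 \leq C(K_\text{in},\eta,m_\infty,M_\infty)\,m(\Omega)$, a bound independent of $\delta$.

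The second task is to convert $\sum_n \Delta t\, D_{\phi_2}^{n+1}$ into control of $\|h_\delta\|_{1,2,\D}^2$. For $\phi_2$ we have $\phi_2''\equiv 2$, so $D_{K,\sigma}\phi_2'(h) = 2\,D_{K,\sigma}h$, and formula \eqref{consitentdissip} of Proposition~\ref{relentprop} reads
\[
D_{\phi_2}(h) = 2\sum_{\substack{\sigma\in\Ei\\ \sigma=K|L}}\tau_\sigma\,(D_{K,\sigma}h)^2\,\eta(f^\infty)_\sigma
  + 2\sum_{K\in\T}\sum_{\sigma\in\Ee[,K]}\tau_\sigma\,(D_{K,\sigma}h)^2\,\eta(f^\infty)_\sigma
  \geq 2\,m_\infty\,|h|_{1,2,\T}^2,
\]
using the lower bound $\eta(f^\infty)_\sigma\geq m_\infty$ from \eqref{boundstat} and the definition \eqref{H1seminorm} of the seminorm. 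Combining the three displays yields
\[
\|h_\delta\|_{1,2,\D}^2 = \sum_{n=0}^{N_T-1}\Delta t\,|h^{n+1}|_{1,2,\T}^2
  \leq \frac{1}{2\,m_\infty}\sum_{n=0}^{N_T-1}\Delta t\, D_{\phi_2}^{n+1}
  \leq \frac{H_{\phi_2}^0}{2\,m_\infty} \leq K_{1,2}^2,
\]
with $K_{1,2}$ depending only on $K_\text{in}$, $\eta$, $m_\infty$, $M_\infty$ (and $m(\Omega)$, which is fixed). In the explicit case one repeats this with \eqref{decrentexplicit}: summing $\frac{H_{\phi_2}^{n+1}-H_{\phi_2}^n}{\Delta t} + (1-\varepsilon)D_{\phi_2}^n \leq 0$ gives the same bound up to the harmless factor $(1-\varepsilon)^{-1}$, which is why hypothesis \eqref{boundU} and the CFL condition \eqref{CFL} with $\Phi=\{\phi_2\}$ must be appended there.

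The step requiring the most care is making explicit that none of the constants secretly depend on $\delta$, $\Delta t$, or $N_T$: the telescoping sum is bounded by $H_{\phi_2}^0$ regardless of how many time steps there are, and $H_{\phi_2}^0$ is controlled purely through the $\delta$-independent hypotheses \eqref{boundinit} and \eqref{boundstat}. Hypotheses \eqref{orthog}, \eqref{regmesh2} and \eqref{cvstat} are not actually needed for this particular estimate — they are listed because Proposition~\ref{estim} is the gateway to the compactness arguments that follow — so I would only invoke \eqref{boundinit}--\eqref{condg} and \eqref{boundstat} in the proof itself. I expect no genuine obstacle here; the only mild subtlety is the bound on $H_{\phi_2}^0$, which is a one-line Lipschitz estimate on $\eta$ over the compact interval $[0,K_\text{in}]$.
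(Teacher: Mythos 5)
Your proposal is correct and follows essentially the same route as the paper: the $L^\infty$ bound is quoted from Theorem~\ref{mainimp}, and the $L^2(0,T;H^1)$ bound comes from telescoping the $\phi_2$-entropy inequality \eqref{decrentimplicit} and the lower bound $D_{\phi_2}(h)\gtrsim m_\infty|h|_{1,2,\T}^2$ obtained from \eqref{consitentdissip} and \eqref{boundstat}. You merely spell out two points the paper leaves implicit — the $\delta$-independent bound on $H_{\phi_2}^0$ and the fact that \eqref{orthog}, \eqref{regmesh2}, \eqref{cvstat} are not actually used here — both of which are accurate.
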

    \begin{proof}
      The first estimate is proved in Theorem \ref{mainimp} and \ref{mainexp}.
      The second estimate is a consequence of the boundedness of the dissipation of $2$-entropy and of the lower bound on the steady
      state since 
      \[
      m_\infty\,|h^n|_{1,2,\T}^2\,\leq\, D_{\phi_2}^n\,.
      \]
      By summing \eqref{decrentimplicit} over $n$ and inserting the above inequality one obtains the result.
    \end{proof}

    From the first result of Proposition~\ref{estim} one obtains weak-$\star$ compactness in $L^\infty$. 
    Now, we need to gain weak compactness on the sequence of discrete gradients as well as strong compactness
    on the sequence of discrete solutions. The strong compactness is obtained by the Riesz-Fréchet-Kolmogorov criterion on spacetime translates. In order to recover right boundary conditions at the limit the criterion is stated on the whole space $\R^{d+1}$ for $\tilde{h}_\delta$ defined by
    \[
    \tilde{h}_\delta(t,x) = 
    \left\{\begin{aligned}
      &h_\delta(t,\bx)-1&&\text{ if }(t,x)\in\Omega_T\,,\\
      &0&&\text{ if }(t,x)\in \R^{d+1}\setminus\overline{\Omega_T}\,.
    \end{aligned}
    \right.
    \]
    The proof of the following lemma can be readily adapted from \cite[Lemma 4.3 and 4.7]{eymard_2000_finite}.
    \begin{lem}
      Under hypotheses \eqref{boundinit}-\eqref{orthog} and \eqref{regmesh2}-\eqref{cvstat}, there is a positive constant $K_2$ depending only on  $K_\infty$, $K_{1,2}$, $\Omega$, $T$ and $\eta$ such that the solution of the scheme \eqref{schemeimp} together with \eqref{defflux}-\eqref{dissflux} satisfies for all $\by\in\R^d$
      \[
      \|\tilde{h}_\delta(\cdot,\cdot+\by)-\tilde{h}_\delta(\cdot,\cdot)\|_{L^2(\R^{d+1})}^2\ \leq\ K_2\,|\by|\,(|\by|+\delta)\,,
      \]
      and for all $\tau\in(0,T)$
      \[
      \|\tilde{h}_\delta(\cdot+\tau,\cdot)-\tilde{h}_\delta(\cdot,\cdot)\|_{L^2((0,T-\tau)\times\R^{d})}^2\ \leq\ K_2\,|\tau|\,.
      \]
    \end{lem}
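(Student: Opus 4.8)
The plan is to prove the two estimates separately, following respectively the space‑translate and time‑translate arguments of \cite[Lemma 4.3 and 4.7]{eymard_2000_finite}; only two features are specific to our setting. First, $h_\delta$ carries the constant boundary value $1$, so that $\tilde h_\delta$ is exactly the zero‑extension of an element with homogeneous Dirichlet traces: since $h_\sigma=1$ for every $\sigma\in\Ee$, the element $\tilde h:=h_\delta(t,\cdot)-1$ belongs to $\X[0]$ for each fixed $t$ and satisfies $|\tilde h|_{1,2,\T}=|h_\delta(t,\cdot)|_{1,2,\T}$. Second, in \eqref{schemeimp} the discrete time derivative bears on $f$ whereas we must control translates of $h$; this is handled by a bi‑Lipschitz change of unknown. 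Throughout, the uniform bounds $0\le f_\delta,h_\delta\le K_\infty$ and $\|h_\delta\|_{1,2,\D}\le K_{1,2}$ of Proposition~\ref{estim} are used.

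For the space translates I would work at a fixed time $t\in[0,T)$ and use the classical finite volume estimate: the difference $\tilde h_\delta(t,\bx+\by)-\tilde h_\delta(t,\bx)$ is an algebraic sum of the jumps $D_{K,\sigma}h^{n+1}$ across the edges $\sigma$ met by the segment $[\bx,\bx+\by]$, the exterior edges being counted with the convention $\tilde h_\sigma=0$. Squaring, bounding the number of crossed edges by Cauchy--Schwarz and integrating over $\bx\in\R^d$ gives $\|\tilde h_\delta(t,\cdot+\by)-\tilde h_\delta(t,\cdot)\|_{L^2(\R^d)}^2\le C\,|h_\delta(t,\cdot)|_{1,2,\T}^2\,|\by|(|\by|+\delta)$, the constant $C$ depending only on the mesh‑regularity constant $\xi_2$ of \eqref{regmesh2}, the extra $\delta$ accounting for edges near $\Gamma$. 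Integrating in $t$ and invoking $\|h_\delta\|_{1,2,\D}\le K_{1,2}$ yields the first inequality, the regime of large $|\by|$ being absorbed trivially into $K_2$ through $0\le h_\delta\le K_\infty$.

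For the time translates I would first reduce to $f$. Since $\eta\in\mathcal{C}^1$ is strictly increasing, its restriction to $[0,K_\infty]$ is bi‑Lipschitz, so \eqref{boundstat} yields, for any cell $K$ and time levels $n\le m$, a pointwise bound $c\,|h_K^m-h_K^n|^2\le (h_K^m-h_K^n)(f_K^m-f_K^n)$ with $c>0$ depending only on $\eta$, $K_\infty$, $m_\infty$, $M_\infty$. Telescoping \eqref{schemeimp} gives $f_K^m-f_K^n=-\frac{\Delta t}{m(K)}\sum_{k=n}^{m-1}\sum_{\sigma\in\E[K]}\F[K]^{k+1}$, and a discrete integration by parts --- whose boundary terms vanish because the traces $h_\sigma\equiv1$, hence also $f_\sigma=f^b$, are time‑independent --- turns $\sum_K(h_K^m-h_K^n)\sum_{\sigma\in\E[K]}\F[K]^{k+1}$ into $-\sum_{\text{edges}}\F[K]^{k+1}\,D_{K,\sigma}(h^m-h^n)$. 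Here one checks that $|\F[K]^\text{diss}(h)|\le M_\infty\tau_\sigma|D_{K,\sigma}h|$ by \eqref{boundstat}, while $|\F[K]^\text{conv}(h)|\le C\,m(\sigma)|U_{K,\sigma}^\infty|$ by \eqref{condg} and $0\le h\le K_\infty$; combined with $\sum_{\sigma}m(\sigma)d_\sigma|U_{K,\sigma}^\infty|^2\le\|\bU_\delta^\infty\|_{L^2(\Omega)}^2$, a consequence of \eqref{regmesh2} which is bounded uniformly in $\delta$ by \eqref{cvstat}, this gives the uniform flux estimate $\sum_{\text{edges}}|\F[K]^{k+1}|^2/\tau_\sigma\le C(|h^{k+1}|_{1,2,\T}^2+1)$.

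Collecting these estimates --- applying the discrete Cauchy--Schwarz inequality over edges after summing the fluxes over $k=n_0+1,\dots,n_1$ --- one arrives, for $t\in(0,T-\tau)$ with $n_0(t),n_1(t)$ such that $t\in[t^{n_0},t^{n_0+1})$ and $t+\tau\in[t^{n_1},t^{n_1+1})$, at
\[
\sum_K m(K)\,|h_K^{n_1+1}-h_K^{n_0+1}|^2\ \le\ \frac{\Delta t}{c}\,|h^{n_1+1}-h^{n_0+1}|_{1,2,\T}\,\Big((n_1-n_0)\sum_{k=n_0+1}^{n_1}C\big(|h^{k+1}|_{1,2,\T}^2+1\big)\Big)^{1/2}.
\]
It then remains to integrate over $t\in(0,T-\tau)$, splitting on whether $\tau\ge\Delta t$ or $\tau<\Delta t$ as in \cite{eymard_2000_finite}: when $\tau<\Delta t$ the left‑hand side is piecewise constant in $t$ and the integral reduces to $\tau\sum_j\|h^{j+1}-h^j\|_{0,2}^2$, bounded by the same flux estimates and $\|h_\delta\|_{1,2,\D}\le K_{1,2}$; when $\tau\ge\Delta t$ a Cauchy--Schwarz step in the pair $(t,k)$, together with $\int_0^{T-\tau}\mathds{1}_{\{n_0(t)<k\le n_1(t)\}}\,\dd t\le\tau$, $n_1(t)-n_0(t)\le\tau/\Delta t+1$ and $\tau+\Delta t\le 2\tau$, produces the linear bound $K_2|\tau|$. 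I expect this last bookkeeping to be the main obstacle: one has to keep the factor $h^{n_1+1}-h^{n_0+1}$ un‑telescoped so that the discrete time derivative falls only on the other factor --- exactly as in the heat‑equation case of \cite{eymard_2000_finite} --- in order to recover a genuine $|\tau|$ rather than a spurious $|\tau|^2/\Delta t$, and the two regimes $\tau\ge\Delta t$ and $\tau<\Delta t$ must be treated separately; beyond that, the only new ingredients are the bi‑Lipschitz substitution between $f$ and $h$ and the uniform‑in‑$\delta$ flux bound secured by \eqref{cvstat}. The explicit scheme is handled identically, with $h^{k+1}$ replaced by $h^k$ in the telescoping, once \eqref{boundU} and the CFL condition \eqref{CFL} with $\Phi=\{\phi_2\}$ are assumed.
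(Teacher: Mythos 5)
Your proposal follows the same route as the paper, which gives no proof of this lemma beyond the pointer to \cite[Lemma 4.3 and 4.7]{eymard_2000_finite}: you adapt the standard space- and time-translate estimates, and you correctly identify the two genuinely new ingredients, namely that $\tilde h_\delta$ is the zero-extension of an element of $\X[0]$ and that the discrete time derivative bears on $f$ while the translates are measured on $h$. The flux bound via $\sum_\sigma m(\sigma)d_\sigma|U^\infty_{K,\sigma}|^2\leq\|\bU^\infty_\delta\|^2_{L^2(\Omega)}$, uniformly bounded by \eqref{regmesh2} and \eqref{cvstat}, and the bookkeeping keeping $h^{n_1+1}-h^{n_0+1}$ un-telescoped are exactly what is needed.

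One justification is wrong as stated, though the step it supports survives. You claim that $\eta$ restricted to $[0,K_\infty]$ is bi-Lipschitz; this fails for the degenerate nonlinearities the paper explicitly targets, e.g.\ $\eta(s)=s^m$ for the porous medium equation, whose inverse is not Lipschitz at $0$. Fortunately the inequality you actually use, $c\,|h_K^m-h_K^n|^2\leq (h_K^m-h_K^n)(f_K^m-f_K^n)$, only requires the \emph{forward} Lipschitz bound: writing $L=\sup_{[0,K_\infty]}\eta'$ one has
\[
(h_K^m-h_K^n)(f_K^m-f_K^n)\;=\;\frac{\bigl(\eta(f_K^m)-\eta(f_K^n)\bigr)(f_K^m-f_K^n)}{\eta(f^\infty)_K}\;\geq\;\frac{\bigl(\eta(f_K^m)-\eta(f_K^n)\bigr)^2}{L\,\eta(f^\infty)_K}\;\geq\;\frac{m_\infty}{L}\,|h_K^m-h_K^n|^2\,,
\]
using \eqref{boundstat}. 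With this replacement the argument goes through, including in the degenerate case. A last cosmetic point: the space-translate constant does not in fact require the regularity constant $\xi_2$ of \eqref{regmesh2}; the classical estimate $\sum_\sigma\chi_\sigma(\bx,\bx+\by)\,d_\sigma\leq |\by|+C\delta$ holds for any admissible mesh, consistently with the statement that $K_2$ depends only on $K_\infty$, $K_{1,2}$, $\Omega$, $T$ and $\eta$.
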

    From the previous results we obtain the following convergences.
    \begin{prop}
      There is a  function $h\in L^\infty(0,T;H^1(\Omega))$ such that as $\delta\rightarrow 0$ and up to  the extraction of a subsequence,
      \[
      \begin{aligned}
	h_\delta\ \longrightarrow&&& h&&\text{strongly in }L^2(\Omega_T),\\
	\nabla^\delta h_\delta\ \longrightarrow&&& \nabla h&&\text{weakly in } \left(L^2(\Omega_T)\right)^d,\\
	f_\delta\ \longrightarrow&&&  f &&\text{strongly in } L^1(\Omega_T),
      \end{aligned}
      \]
      with $f(t,\bx) = \eta^{-1}(\eta(f^\infty(t,\bx))\,h(t,\bx))$, $(t,\bx)-$almost everywhere in $\Omega_T$. Moreover $h-1\in L^\infty(0,T;H^1_0(\Omega))$.
      \label{p:compactness}
    \end{prop}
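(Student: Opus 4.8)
The plan is to deduce the three convergences from the uniform bounds of Proposition~\ref{estim} and the translation estimates of the preceding lemma via classical compactness tools, the only genuinely non-standard point being the nonlinear identification of the limit of $(f_\delta)_\delta$.

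\textbf{Step 1: strong $L^2$ compactness of $(h_\delta)_\delta$.} The family $(\tilde h_\delta)_{\delta>0}$ is bounded in $L^2(\R^{d+1})$ by the $L^\infty$ estimate of Proposition~\ref{estim} together with the boundedness of $\Omega_T$, and it is supported in the fixed compact set $\overline{\Omega_T}$. The space and time translation estimates of the preceding lemma say precisely that its translates are equicontinuous in $L^2(\R^{d+1})$, uniformly in $\delta$, so the Riesz-Fréchet-Kolmogorov criterion applies: up to a subsequence, $\tilde h_\delta \to \tilde h$ strongly in $L^2(\R^{d+1})$ and almost everywhere. Setting $h = 1 + \tilde h$ on $\Omega_T$ gives the first convergence $h_\delta \to h$ in $L^2(\Omega_T)$. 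Letting $\delta \to 0$ in the space-translation estimate yields $\|\tilde h(\cdot,\cdot+\by)-\tilde h\|_{L^2(\R^{d+1})}^2 \le K_2\,|\by|^2$ for all $\by\in\R^d$, which by the standard characterization of Sobolev spaces forces $\tilde h(t,\cdot)\in H^1(\R^d)$ for a.e. $t$ with controlled norm; since $\tilde h$ vanishes outside $\overline{\Omega}$ in the space variable, this gives $h-1 \in H^1_0(\Omega)$ a.e. in time.

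\textbf{Step 2: weak convergence of the discrete gradients and their identification.} The bound $\|h_\delta\|_{1,2,\D}\le K_{1,2}$ of Proposition~\ref{estim}, combined with the mesh-regularity hypothesis~\eqref{regmesh2}, controls $\nabla^\delta h_\delta$ in $(L^2(\Omega_T))^d$; hence, up to a further subsequence, $\nabla^\delta h_\delta \rightharpoonup G$ weakly in $(L^2(\Omega_T))^d$. To identify $G=\nabla h$, I would test against an arbitrary $\boldsymbol{\varphi}\in\mathcal C_c^\infty(\Omega_T;\R^d)$, rewrite $\iint_{\Omega_T}\nabla^\delta h_\delta\cdot\boldsymbol{\varphi}$ as a sum over the dual cells $T_\sigma$, perform a discrete integration by parts to transfer the difference operator $D_{K,\sigma}$ onto $\boldsymbol{\varphi}$, and pass to the limit using the orthogonality condition~\eqref{orthog} (which makes the dual-mesh reconstruction consistent with the gradient), the regularity~\eqref{regmesh2}, and the strong $L^2$ convergence of $h_\delta$ from Step~1 to handle the remainder. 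This identifies $G$ with the distributional gradient of $h$; together with Step~1 and the uniform bounds (passed to the limit via weak lower semicontinuity) we obtain $h\in L^\infty(0,T;H^1(\Omega))$ and $h-1\in L^\infty(0,T;H^1_0(\Omega))$.

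\textbf{Step 3: nonlinear identification of $f$.} By the definition~\eqref{defh} of the reformulated unknown, $f_\delta = \eta^{-1}\big(\,(\text{cell-wise reconstruction of }\eta(\finf))\cdot h_\delta\,\big)$, and this reconstruction is uniformly bounded between $m_\infty$ and $M_\infty$ by~\eqref{boundstat} and converges strongly in $L^2(\Omega)$ in the spirit of~\eqref{cvstat}. Along the subsequence extracted above, after one more extraction, the steady-state reconstruction converges to $\eta(\finf)$ and $h_\delta\to h$ almost everywhere on $\Omega_T$; continuity of $\eta^{-1}$ then gives $f_\delta \to f := \eta^{-1}(\eta(\finf)\,h)$ almost everywhere. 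Finally, the uniform $L^\infty$ bound on $f_\delta$ and the boundedness of $\Omega_T$ allow dominated convergence to upgrade this to strong convergence $f_\delta\to f$ in $L^1(\Omega_T)$ (indeed in every $L^p(\Omega_T)$, $p<\infty$), whence $f\in L^1(\Omega_T)$.

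\textbf{Expected main difficulty.} The delicate point is Step~2: showing that the dual-mesh reconstruction $\nabla^\delta h_\delta$ converges weakly to the true distributional gradient of $h$, and that the homogeneous condition on $h-1$ survives at the limit. This is where hypotheses~\eqref{orthog} and~\eqref{regmesh2} are essential, and where the bookkeeping of edge and dual-cell sums must be done carefully. By contrast, the nonlinear identification of Step~3 — which is the conceptual novelty of the convergence proof, since the only nonlinearity of~\eqref{contprob2} is concentrated in the term involving $f$ — is technically short once the strong compactness of Step~1 and the convergence of the discrete steady state are available.
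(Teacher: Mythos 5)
Your proposal is correct and follows essentially the same route as the paper: Riesz--Fr\'echet--Kolmogorov applied to the translation estimates for the strong $L^2$ convergence of $h_\delta$ and the $H^1_0$ regularity of the limit, weak compactness of the dual-mesh gradients identified via discrete integration by parts under \eqref{orthog} and \eqref{regmesh2} (a step the paper simply delegates to \cite[Lemma 4.4]{chainais_2003_finite}), and almost-everywhere convergence plus dominated convergence for the nonlinear identification $f=\eta^{-1}(\eta(f^\infty)h)$. The only caveat, which you share with the paper, is that the available uniform bound $\|h_\delta\|_{1,2,\mathcal{D}}\leq K_{1,2}$ is an $L^2$-in-time bound, so the argument really yields $h-1\in L^2(0,T;H^1_0(\Omega))$ rather than the $L^\infty(0,T;H^1)$ regularity stated; this is harmless since Definition~\ref{defsol} only requires the former.
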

    \begin{proof}
      The first result is a consequence of the Riesz-Fréchet-Kolmogorov $L^p$ compactness criterion. It yields the convergence of $(\tilde{h}_\delta)_{\delta>0}$ to a function $\tilde{h}$ strongly in $L^2([0,T)\times\R^{d})$. By taking limits in the first space translates estimate, we know that $\tilde{h}\in L^2(0,T;H^1(\R^{d}))$. Since by definition $\tilde{h}_\delta(t,\cdot)\equiv 0$ on $\R^{d}\setminus\overline{\Omega}$, we get that $\tilde{h}\in L^2(0,T;H^1_0(\Omega))$.  Hence $(h_\delta)_\delta$ converges strongly  in $L^2(\Omega_T)$ to $h = 1+ \tilde{h}$  and up to the extraction of a sparser subsequence, it also converges almost everywhere. Since $h_\delta$ and $\eta(\finf)_\delta$ are uniformly bounded, $\eta^{-1}$ is continuous and the cylinder $\Omega_T$ is bounded, one can apply the dominated convergence theorem to obtain the convergence of $f_\delta$ towards $f$. For the weak convergence of discrete gradients we refer to \cite[Lemma 4.4]{chainais_2003_finite} with minor modifications.
    \end{proof}

    \subsection{Convergence of the scheme}
    
    \begin{theo}
      Under hypotheses \eqref{boundinit}-\eqref{orthog} and \eqref{regmesh2}-\eqref{cvstat}, the function $h$ defined in Proposition \ref{p:compactness} is a solution of Equation~\eqref{contprob2} in the sense of Definition~\ref{defsol}.
      \label{t:convergence}
    \end{theo}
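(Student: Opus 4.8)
The plan is to pass to the limit $\delta \to 0$ in the weak formulation of the discrete scheme, using the compactness already established in Proposition~\ref{p:compactness}. First I would fix a test function $\psi \in \mathcal{C}^\infty_c(\Omega_T)$, multiply the implicit scheme \eqref{schemeimp} by $\Delta t\, \psi(t^n, \bx_K)$, and sum over all $K \in \T$ and $n \in \{0,\dots, N_T-1\}$. A discrete integration by parts (Abel summation in time for the discrete time-derivative term, and the usual summation-by-parts over edges for the flux terms) recasts this identity as the sum of three pieces: a term $\mathcal{T}_\delta$ carrying the discrete $\partial_t f$ against a discrete $\partial_t \psi$, a diffusive term $\mathcal{D}_\delta$ pairing $\F[K]^{\mathrm{diss}}$ with discrete differences of $\psi$, a convective term $\mathcal{C}_\delta$ pairing $\F[K]^{\mathrm{conv}}$ similarly, plus an initial-data term $\mathcal{I}_\delta$ coming from $f^0_K = f^{\mathrm{in}}_K$. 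The goal is to show each of these converges to its continuous counterpart in Definition~\ref{defsol}.

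The routine but necessary part is to replace the piecewise-constant ``test function on cells/edges'' by the genuine $\psi$ up to terms vanishing with $\delta$: since $\psi$ is smooth and compactly supported, $|\psi(t^n,\bx_K) - \psi(t,\bx)| \lesssim \delta$ uniformly, and $D_{K,\sigma}\psi$ approximates $d_\sigma \nabla\psi \cdot \bn_{K,\sigma}$ consistently thanks to the orthogonality hypothesis \eqref{orthog} and mesh regularity \eqref{regmesh2}. For $\mathcal{T}_\delta$ one uses $f_\delta \to f$ strongly in $L^1(\Omega_T)$ (and the $L^\infty$ bound) to get convergence to $-\iint_{\Omega_T} f\,\partial_t\psi$, with the boundary contribution at $n=0$ combining with $\mathcal{I}_\delta \to -\int_\Omega f^{\mathrm{in}}\psi(0,\cdot)$; here one checks $f^{\mathrm{in}}_K \to f^{\mathrm{in}}$ in $L^1$ from \eqref{boundinit}. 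For $\mathcal{D}_\delta$ one writes it, via the dual mesh, as $\iint_{\Omega_T} \eta(f^\infty)_\delta\, \nabla^\delta h_\delta \cdot \nabla\psi_\delta$ plus a negligible remainder, and passes to the limit using the strong convergence $\eta(f^\infty)_\delta \to \eta(f^\infty)$ in $L^2$ from \eqref{cvstat} against the weak convergence $\nabla^\delta h_\delta \rightharpoonup \nabla h$ in $L^2$ (a product of weak times strong); note $\nabla\psi_\delta \to \nabla\psi$ strongly and boundedly. For $\mathcal{C}_\delta$, the convective flux \eqref{convflux} with the monotone numerical flux $g$ is consistent with $h\,\bU^\infty$: one rewrites it as $\iint_{\Omega_T} \bU_\delta^\infty\, h_\delta\, \cdot \nabla\psi_\delta$ plus corrections involving $g(h_K,h_L)-h_K$ and $g(h_L,h_K)-h_L$, each controlled by $|D_{K,\sigma}h|$ times the Lipschitz constant of $g$, hence by $\|h_\delta\|_{1,2,\D}$ and $\delta$; the main term converges by strong $L^2$ convergence of $h_\delta$ against weak $L^2$ convergence of $\bU_\delta^\infty$ from \eqref{cvstat}, again weak-times-strong.

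I expect the main obstacle to be the convective term $\mathcal{C}_\delta$: one must simultaneously (i) discard the upwind/numerical-flux corrections uniformly in $\delta$ using the $\mathcal{O}(\delta)$ factor gained from the consistency error of $\nabla\psi$ and the $H^1$ bound on $h_\delta$ — this is the place where $\|h_\delta\|_{1,2,\D} \le K_{1,2}$ from Proposition~\ref{estim} is essential — and (ii) justify the weak-strong passage to the limit in $\iint \bU_\delta^\infty\, h_\delta \cdot \nabla\psi_\delta$ when only $\bU_\delta^\infty \rightharpoonup \bU^\infty$ weakly and $h_\delta \to h$ strongly, which is fine but must be combined carefully with replacing $\nabla\psi_\delta$ by $\nabla\psi$. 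A secondary subtlety is the identification of the limit of the nonlinear term $f = \eta^{-1}(\eta(f^\infty)h)$, but this is already handled by the strong (a.e.) convergence in Proposition~\ref{p:compactness}, so it only needs to be invoked. Once all four terms are passed to the limit, the resulting identity is exactly the weak formulation in Definition~\ref{defsol}, and since $h-1 \in L^\infty(0,T;H^1_0(\Omega)) \subset L^2(0,T;H^1_0(\Omega))$ and $f \in L^1(\Omega_T)$ by Proposition~\ref{p:compactness}, all three requirements of Definition~\ref{defsol} are met, which completes the proof.
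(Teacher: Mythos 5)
Your proposal is correct and follows essentially the same route as the paper: multiply the scheme by $\Delta t\,\psi(t^n,\bx_K)$, sum, perform discrete integrations by parts, use the orthogonality-based consistency of discrete gradients together with the $L^\infty$ and $\|h_\delta\|_{1,2,\D}$ bounds, split the upwind convective flux into a centered part plus corrections controlled by the Lipschitz constant of $g$ and $|D_{K,\sigma}h|$, and pass to the limit by pairing strong and weak $L^2$ convergences from Proposition~\ref{p:compactness} and \eqref{cvstat}. The paper organizes this by defining target terms $T_{i0}(\delta)$ and proving $|T_i(\delta)-T_{i0}(\delta)|\to 0$, but the substance is identical to your plan.
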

    Let us mention that we follow hereafter the methods of proof from \cite{chainais_2003_finite, chainais_2004_finite}.
    \begin{proof}
      Let us consider a test function $\psi\in\mathcal{C}^\infty_c([0,T)\times\Omega)$ and set $\psi_K^n = \psi(t^n,\bx_K)$ for all $K\in\T$ and $n = 0,\dots, N_T$. We suppose that the mesh size $\delta$ is sufficiently small for the inclusion $\mathrm{supp}(\psi)\subset[0,(N_T-1)\Delta t)\times\{\bx\in\Omega,\ d(\bx,\Gamma)>\delta\}$  to  hold. With this assumption sums on exterior edges disappear as one may remark in various terms hereafter. Let us define 
      \[\left\{
      \begin{aligned}
	&T_{10}(\delta)\ =\ -\iint_{\Omega_T}f_\delta(t,\bx)\,\partial_t\psi(t,\bx)\, \dd \bx \dd t - \int_\Omega f^\text{in}_\delta(\bx)\,\psi(0,\bx)\,\dd \bx,\\
	&T_{20}(\delta)\ =\ \iint_{\Omega_T}\eta(f^\infty)_\delta\,\nabla^\delta h_\delta\cdot\nabla\psi\, \, \dd \bx \dd t,\\
	&T_{30}(\delta)\ =\ -\iint_{\Omega_T}h_\delta\,\bU^\infty_\delta\cdot\nabla\psi\, \dd \bx \dd t.
      \end{aligned}\right.
      \]
      From the results of Proposition \ref{p:compactness} and assumption \eqref{cvstat}, it is clear that 
      \[
      T_{10}(\delta)+T_{20}(\delta)+T_{30}(\delta)\longrightarrow -\iint_{[0,T)\times\Omega}\left[f\,\partial_t\psi +
      \left(h\,\bU^\infty -\eta(f^\infty)\,\nabla h\right)\cdot \nabla \psi\right]
      - \int_\Omega f^\text{in}\,\psi(0,\cdot)
      \]
      as $\delta\rightarrow 0$. Now let us show that it also converges to $0$.
      By multiplying the scheme by $\Delta t\,\psi_K^n$ and summing over $n$ and $K$ we obtain 
      \[
      T_1(\delta) + T_2(\delta) + T_3(\delta) = 0,
      \]
      with
      \[\left\{
      \begin{aligned}
	&T_{1}(\delta)\ =\ \sum_{n=0}^{N_T}\sum_{K\in\T}m(K)\,(f_K^{n+1}-f^n_K)\,\psi_K^n,\\
	&T_{2}(\delta)\ =\ \sum_{n=0}^{N_T}\Delta t\sum_{K\in\T}\sum_{\substack{\sigma\in\Ei\\ \sigma= K|L}}\tau_\sigma\,\eta(\finf)_\sigma\, (h_K^{n+1}-h_L^{n+1})\,\psi_K^n,\\
	&T_{3}(\delta)\ =\ \sum_{n=0}^{N_T}\Delta t\sum_{K\in\T}\sum_{\substack{\sigma\in\Ei\\ \sigma= K|L}}m(\sigma)\left[U_{K,\sigma}^{\infty,+} g(h_K^{n+1},h_L^{n+1}) - U_{K,\sigma}^{\infty,-}g(h_L^{n+1},h_K^{n+1}) \right]\,\psi_K^n\,.
      \end{aligned}\right.
      \]
      Let us show that each $T_i(\delta)$ gets asymptotically close to $T_{i0}(\delta)$, for $i=1,\,2$ or $3$, as  $\delta$ goes to $0$.
      After a discrete integration by parts, one gets
      \[
      \begin{aligned}
	&T_{1}(\delta)&=&&& \sum_{n=0}^{N_T}\sum_{K\in\T}m(K)\,f_K^{n+1}\,(\psi_K^n-\psi_K^{n+1}) - \sum_{K\in\T}m(K)\,f_K^{0}\,\psi_K^0\,,\\
	&&=&&&-\sum_{n=0}^{N_T}\sum_{K\in\T}\int_{t^n}^{t^{n+1}}\int_K f_K^{n+1}\,\partial_t\psi(t,\bx_K)\,\dd \bx\dd t - \sum_{K\in\T}\int_K f_K^{0}\,\psi(0,\bx_K)\,\dd \bx\,,
      \end{aligned}
      \]
      which yields 
      \[
      |T_{1}(\delta)-T_{10}(\delta)|\,\leq\,\delta\ (T+1)\,m(\Omega)\,\|f_\delta\|_{L^\infty(\Omega_T)}\,\|\psi\|_{\mathcal{C}^2(\overline{\Omega_T})}\rightarrow 0\,,
      \]
      as $\delta\rightarrow 0$ by Proposition \ref{estim}. Concerning the diffusion term one has, integrating by parts
      \[
      T_{2}(\delta)\ =\ \sum_{n=0}^{N_T}\Delta t\sum_{\substack{\sigma\in\Ei\\ \sigma= K|L}}\tau_\sigma\,\eta(\finf_\sigma)\, (h_K^{n+1}-h_L^{n+1})\,(\psi_K^n-\psi_L^n)
      \]
      and
      \[
      T_{20}(\delta)\ =\ \sum_{n=0}^{N_T}\sum_{\substack{\sigma\in\Ei\\ \sigma= K|L}}m(\sigma)\,\eta(f^\infty)_\sigma\,(h_L^{n+1}-h_K^{n+1})\int_{t^n}^{t^{n+1}}\frac{1}{m(T_\sigma)}\int_{T_\sigma}\nabla\psi\cdot\bn_{K,\sigma}\, \, \dd \bx \dd t.
      \]
      Now just note that by the orthogonality hypothesis \eqref{orthog} on the mesh and the regularity of $\psi$ one has, 
      \begin{equation}
	\left|\psi_L^n -\psi_K^n - \frac{1}{\Delta t}\int_{t^n}^{t^{n+1}}\frac{1}{m(T_\sigma)}\int_{T_\sigma}\nabla\psi\cdot d_\sigma\bn_{K,\sigma}\, \, \dd \bx \dd t\right| \leq \|\psi\|_{\mathcal{C}^2(\overline{\Omega_T})}\,\delta.
	\label{taylor}
      \end{equation}
      Therefore, with a Cauchy-Schwartz inequality and thanks to  the regularity of the mesh one obtains 
      \[
      |T_{2}(\delta)-T_{20}(\delta)|\leq\delta\ \sqrt{\xi_2\,T}\,\|\eta(f^\infty)_\delta\|_{L^2(\Omega)}\,\|h_\delta\|_{1,2,\D}\,\|\psi\|_{\mathcal{C}^2(\overline{\Omega_T})}\rightarrow 0\,,
      \]
      as $\delta\rightarrow 0$, by Proposition \ref{estim} and assumption \eqref{cvstat}.
      Finally let us deal with the convection term. As it is classical we transform the upwind form of $T_{3}(\delta)$ into the sum of a numerical diffusion and a centered flux yielding $T_{3}(\delta) = T_{31}(\delta) + T_{32}(\delta)$, with
      \[
      \begin{aligned}
	&T_{31}(\delta)&=&&&\frac{1}{2}\sum_{n=0}^{N_T}\Delta t\sum_{K\in\T}\sum_{\substack{\sigma\in\Ei\\ \sigma= K|L}}m(\sigma)\left|U_{K,\sigma}^\infty\right| \left(g(h_K^{n+1},h_L^{n+1})-g(h_L^{n+1},h_K^{n+1})\right)\,\psi_K^n\,,\\
	&&=&&&\frac{1}{2}\sum_{n=0}^{N_T}\Delta t\sum_{\substack{\sigma\in\Ei\\ \sigma= K|L}}m(\sigma)\left|U_{K,\sigma}^\infty\right|\left(g(h_K^{n+1},h_L^{n+1})-g(h_L^{n+1},h_K^{n+1})\right)\,(\psi_K^n-\psi_L^n)\,,\\
      \end{aligned}
      \]
      whereas $T_{32}(\delta)$ is
      \[
      \begin{aligned}
	&T_{32}(\delta)&=&&&\frac{1}{2}\sum_{n=0}^{N_T}\Delta t\sum_{K\in\T}\sum_{\substack{\sigma\in\Ei\\ \sigma= K|L}}m(\sigma)\phantom{|}U_{K,\sigma}^\infty\phantom{|}\left(g(h_K^{n+1},h_L^{n+1})+g(h_L^{n+1},h_K^{n+1})\right) \,\psi_K^n\,,\\
	&&=&&&\sum_{n=0}^{N_T}\Delta t\sum_{\substack{\sigma\in\Ei\\ \sigma= K|L}}m(\sigma)\phantom{|}U_{K,\sigma}^\infty\phantom{|}g(h_K^{n+1},h_L^{n+1}) \,(\psi_K^n-\psi_L^n)\,,\\
	&&=&&&T_{321}(\delta)\ +\ T_{322}(\delta)\,,
      \end{aligned}
      \]
      with $T_{321}(\delta)$ and $T_{322}(\delta)$ given by
      \[
      \begin{aligned}
	&T_{321}(\delta)&=&&&\sum_{n=0}^{N_T}\Delta t\sum_{\substack{\sigma\in\Ei\\ \sigma= K|L}}m(\sigma)\phantom{|}U_{K,\sigma}^\infty\phantom{|}(g(h_K^{n+1},h_L^{n+1}) - h_K^{n+1})\,(\psi_K^n-\psi_L^n)\,,\\
	&T_{322}(\delta)&=&&&\sum_{n=0}^{N_T}\Delta t\sum_{\substack{\sigma\in\Ei\\ \sigma= K|L}}m(\sigma)\phantom{|}U_{K,\sigma}^\infty\phantom{|}h_K^{n+1}\,(\psi_K^n-\psi_L^n)\,.
      \end{aligned}
      \]
      Similarly we introduce the decomposition $T_{30}(\delta) = T_{310}(\delta) + T_{320}(\delta)$ with
      \[
      \begin{aligned}
	&T_{310}(\delta)&=&&&-\sum_{n=0}^{N_T}\sum_{\substack{\sigma\in\Ei\\ \sigma= K|L}}\int_{t^n}^{t^{n+1}} \int_{T_\sigma\cap L}(h_L^{n+1}-h_K^{n+1})\,\bU^\infty_\delta\cdot\nabla\psi\, \dd \bx \dd t\,,\\
	&T_{320}(\delta)&=&&&-\sum_{n=0}^{N_T}\sum_{\substack{\sigma\in\Ei\\ \sigma= K|L}}\int_{t^n}^{t^{n+1}} \int_{T_\sigma}h_K^{n+1}\,\bU^\infty_\delta\cdot\nabla\psi\, \dd \bx \dd t\,.
      \end{aligned}
      \]
      Let us prove that $T_{31}(\delta)$, $T_{310}(\delta)$, $T_{321}(\delta)$ and $T_{322}(\delta)-T_{320}(\delta)$ converge to zero when $\delta$ goes to zero. First, notice that using the Lipschitz continuity of $g$, the definition of $\bU_\delta^\infty$ and eventually the Cauchy-Schwartz inequality yields
      \[
      |T_{321}(\delta)|+|T_{31}(\delta)|\,\leq\,2\,\delta\,\|g\|_{W^{1,\infty}([0,\,\|h_\delta\|_{L^\infty}]^2)}\,\,\|\psi\|_{\mathcal{C}^1(\overline{\Omega_T})}\,\|h_\delta\|_{1,2,\D}\,\|\bU_\delta^\infty\|_{L^2(\Omega_T)}.
      \]
      Then, similarly one obtains
      \[
      |T_{310}(\delta)|\,\leq\,\delta\,\|\psi\|_{\mathcal{C}^1(\overline{\Omega_T})}\,\|h_\delta\|_{1,2,\D}\,\|\bU_\delta^\infty\|_{L^2(\Omega_T)},
      \]
      and by Proposition \ref{estim} and hypotheses \eqref{condg} \eqref{cvstat} both right-hand sides go to $0$ as $\delta$ goes to $0$. Finally
      \[
      T_{322}(\delta)-T_{320}(\delta)\, =\, \sum_{n=0}^{N_T}\sum_{\substack{\sigma\in\Ei\\ \sigma= K|L}}m(\sigma)\,\,\phantom{|}U_{K,\sigma}^\infty\phantom{|}h_K^{n+1}\,\int_{t^n}^{t^{n+1}}\left(\psi_K^n-\psi_L^n- \frac{1}{m(T_\sigma)}\int_{T_\sigma}\,\nabla\psi\cdot\,d_\sigma\bn_{K,\sigma}\, \dd \bx\right)\dd t,
      \]
      and by using \eqref{taylor} we obtain,
      \[
      |T_{322}(\delta)-T_{320}(\delta)|\,\leq\,\delta\,\sqrt{T\,m(\Omega)}\,\|\psi\|_{\mathcal{C}^2(\overline{\Omega_T})}\,\|h_\delta\|_{L^\infty(\Omega_T)}\,\|\bU_\delta^\infty\|_{L^2(\Omega_T)}.
      \]
      Hence, $|T_{i}(\delta) - T_{i0}(\delta)|\rightarrow0$
      as $\delta\rightarrow 0$, for any $i\in\{1,2,3\}$.
    \end{proof}

    \section{Numerical simulations}
    \label{numerics}

    \subsection{Implementation}

    Before presenting our numerical results let us state an important remark concerning the implementation of our scheme. By Theorem~\ref{longtimetheo}, we expect the solution $h_\delta$ of the scheme \eqref{schemeimp},\eqref{defflux}-\eqref{dissflux} or \eqref{schemeexp}-\eqref{dissflux} to converge to $1$ when time goes to infinity. Due to floating point numbers repartition there can be non-negligible numerical errors in the computation of difference operators leading to saturation of $\phi$-entropies. To avoid this issue, the scheme should be implemented in the following way. We introduce new unknowns defined by,
    \[
    \tilde{f}^n_K\ =\ f^n_K - \finf_K,\qquad \tilde{h}^n_K\ =\  h^n_K - 1\,,
    \]
    for $n\in\{0,\dots, N_T\}$ and $K\in\T$.
    One can readily check that in the upwind case $g(s,t)=s$, thanks to \eqref{stat}, the schemes remain unchanged by replacing $f$ by $\tilde{f}$ and $h$ by 
    $\tilde{h}$. Even for other $g$, this modified scheme is the discretization of 
    \[
     \frac{\partial}{\partial t}(f-\finf) \,+\, {\nabla}\cdot\left(\bU^\infty\,(h-1) \,-\,  \eta(f^\infty)\nabla (h-1)\right) \,=\,0\,,
    \]
    which is the same as \eqref{contprob2} since $\finf$ is steady and $\bU^\infty$ is incompressible.
    
    The new unknowns converge to $0$ when time goes to infinity and thus differences are computed with a better precision.
    Hence one should solve the scheme (and compute $\phi$-entropies, dissipations, \emph{etc...}) on the new unknowns. Moreover boundary conditions on $\tilde{h}$ become homogeneous, which actually makes the implementation easier. 
    
    \smallskip
    
    In the following test cases, the implicit scheme \eqref{schemeimp},\eqref{defflux}-\eqref{dissflux}  is used only for linear models in Section~\ref{test0} - \ref{test2}. The nonlinear model in Section~\ref{test3} is solved with the explicit scheme  \eqref{schemeexp}-\eqref{dissflux}. For the first test cases we use the linear solver SuperLU \cite{overview_2005_li}, which provides efficient results for large sparse and non-symmetric systems by performing a sparsity-preserving LU factorization. Since the advection field is steady in Equation \eqref{contprob2}, the resolution matrix can be factorized only once at the beginning of the simulation.
    \subsection{Proof of concept}\label{test0}
    In this part, we provide a numerical experiment showing the spatial accuracy of our scheme, especially in the long-time dynamics. It is performed on the following one dimensional toy model.
    The test case is the linear ($\eta(s) = s$) drift-diffusion equation  \eqref{contprob} endowed with the (scalar) advection $E(x) = 1$ and set on the  domain $\Omega = (0,1)$.  With the boundary conditions $f(t,0) = 2$ and
    $f(t,0) = 1+\exp(1)$ the function 
    \[
    f(t,x) = 1 + \exp(x) + \exp\left(\frac{x}{2}-\left(\pi^2+\frac 14\right)t\right)\sin(\pi x)
    \]
    is the exact solution of \eqref{contprob} and converges to the stationary state
    \[
    f^\infty(x) = 1 + \exp(x)\,,
    \]
    as time goes to infinity. 
    
    In order to illustrate the advantage of our approach compared to the
    one consisting in a direct approximation of (\ref{contprob}), we
    perform numerical simulations using our
    scheme \eqref{schemeimp}, \eqref{defflux}-\eqref{dissflux}  and a classical finite volume discretization of \eqref{contprob}
    with upwind flux for the convective term and two-points
    approximation of the gradient for the diffusive term. Both schemes are implicit in time and we shall call the former ``$\phi$-entropic'' and the latter ``upwind''.
    
    The domain is discretized by the regular Cartesian mesh $\T = \{K_i:=(x_i-\Delta x/2,\,x_i+\Delta x/2),\ i =0,\dots,N-1\}$ where $x_i = \Delta x/2 + i\Delta x$ and $\Delta x = 1/N$. Concerning the time discretization, the final time is $T=5$ and we choose the small time step $\Delta t = 10^{-6}$ in order to minimize the error due to time discretization. For the implementation of our scheme we explicitly compute both the discrete steady solution and the steady flux.

    We define the $L^p$ error at time $t$ between the reconstruction of the approximate solution $f_N$ (corresponding to $f_\delta$ with previous notation) and the projection of the analytic solution on the mesh by
    \[
    \epsilon_N^p(t) = \|\Pi_Nf(t,\cdot)-f_N(t,\cdot)\|_{L^p(\Omega)}
    \]
    where $\Pi_Nf(t,x) = f_i^n$ if  $(t,x)\in[t^n,t^{n+1})\times K_i$
    with $f_i^n$ a numerical approximation of the average of $f(t^{n+1},\cdot)$ on the cell $K_i$, computed with the trapezoidal rule. In Table \ref{order1global}, we measure, for both
    schemes and for different number of points $N$, the global error and experimental order of accuracy, respectively given by
    \[
    e_N^p = \sup_{t\in[0,T)}\epsilon_N^p(t),\quad k_{2N}^p = |\log(e_{2N}) - \log(e_N)|/ \log(2)\,.
    \] 
    \begin{table}[H]
      \[
      \begin{array}{|c||c|c||c|c|||| c|c|c||c|c|}
	\hline
	N&\text{Error }e_N^1&\text{Order}&\text{Error }e_N^1&\text{Order}     &\text{Error }e_N^\infty&\text{Order}&\text{Error }e_N^\infty&\text{Order}\\
	&\mathbf{\phi}\text{-entropic} &&\text{Upwind}&    &\phi\text{-entropic} &&\text{Upwind}&\\
	\hline
	20&2.07.10^{-3}&		&4.28.10^{-3}&	        &3.33.10^{-3}&		&7.38.10^{-3}&	\\
	\hline
	40&1.21.10^{-3}&	0.77&	2.36.10^{-3}&	0.86    &1.93.10^{-3}&	0.79&	4.03.10^{-3}&	0.87\\
	\hline
	80&6.45.10^{-4}&	0.91&	1.24.10^{-3}&	0.93    &1.02.10^{-3}&	0.91&	2.11.10^{-3}&	0.94\\
	\hline
	160&3.30.10^{-4}&	0.97&	6.30.10^{-4}&	0.97    &5.22.10^{-4}&	0.97&	1.07.10^{-3}&	0.98\\
	\hline
	320&1.64.10^{-4}&	1.01&	3.15.10^{-4}&	1.00    &2.59.10^{-4}&	1.01&	5.31.10^{-4}&	1.00\\
	\hline
	640&7.87.10^{-5}&	1.06&	1.55.10^{-4}&	1.03    &1.26.10^{-4}&	1.06&	2.61.10^{-4}&	1.02\\
	\hline
	1280&3.57.10^{-5}&      1.14&	7.38.10^{-5}&	1.07    &5.65.10^{-5} &1.14&	1.25.10^{-4}&	1.06\\
	\hline
      \end{array}
      \]
      \caption{{\bf Proof of concept.} Experimental spatial order of convergence in $L^1$ and $L^\infty$. \label{order1global}}
    \end{table}
    
    Both schemes are first order accurate, but we observe that our $\phi$-entropic scheme  \eqref{schemeimp}, \eqref{defflux}-\eqref{dissflux}  is performing better
    since the numerical error is smaller than the classical upwind
    scheme. Furthermore in Figure~\ref{fig:cv:1}, we observe that for
    large time the numerical error corresponding to the entropy
    preserving scheme  \eqref{schemeimp},\eqref{defflux}-\eqref{dissflux}  decays to zero and
    the error becomes negligible compared to that of the upwind scheme.
    \begin{figure}[H]
      \begin{center}
	\begin{tabular}{cc}
	  \includegraphics[width=7.5cm]{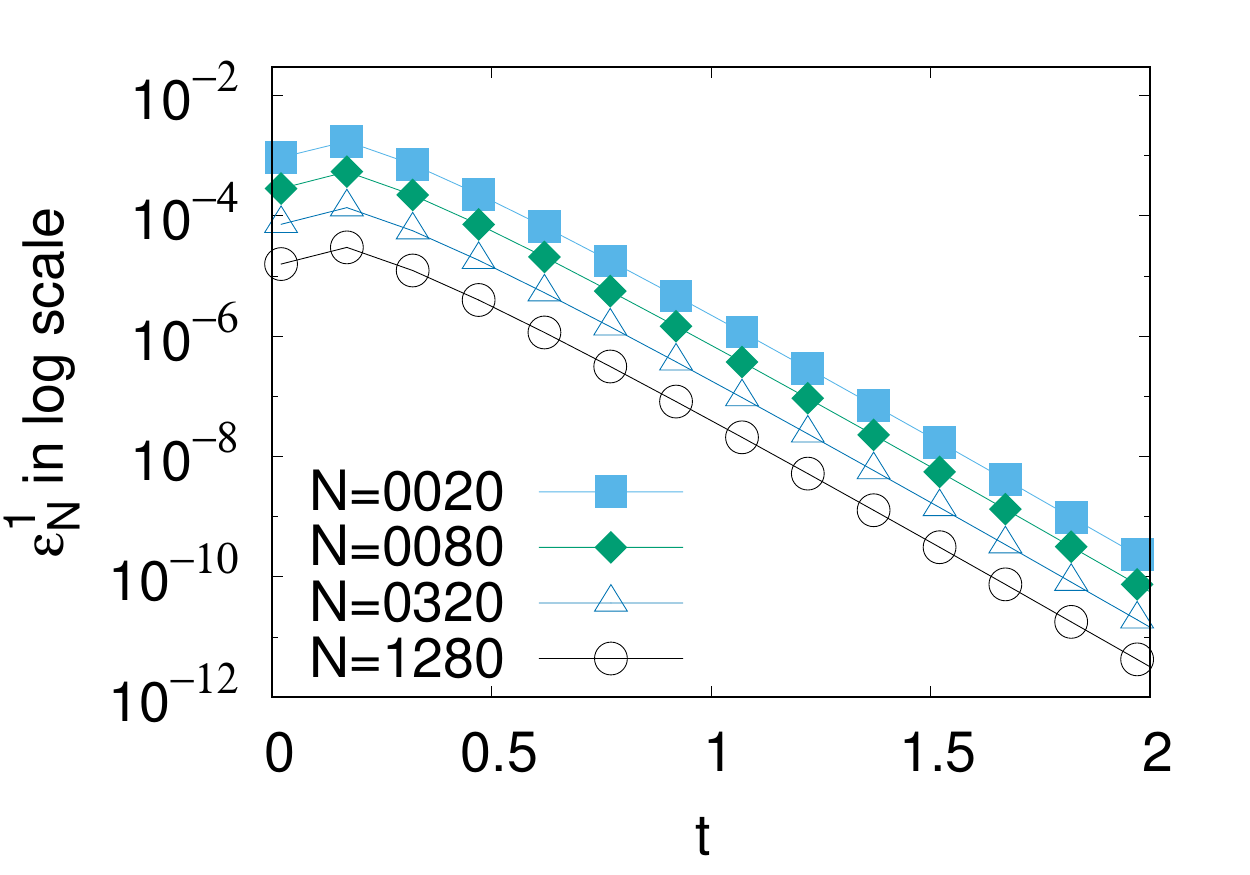} &   
	  \includegraphics[width=7.5cm]{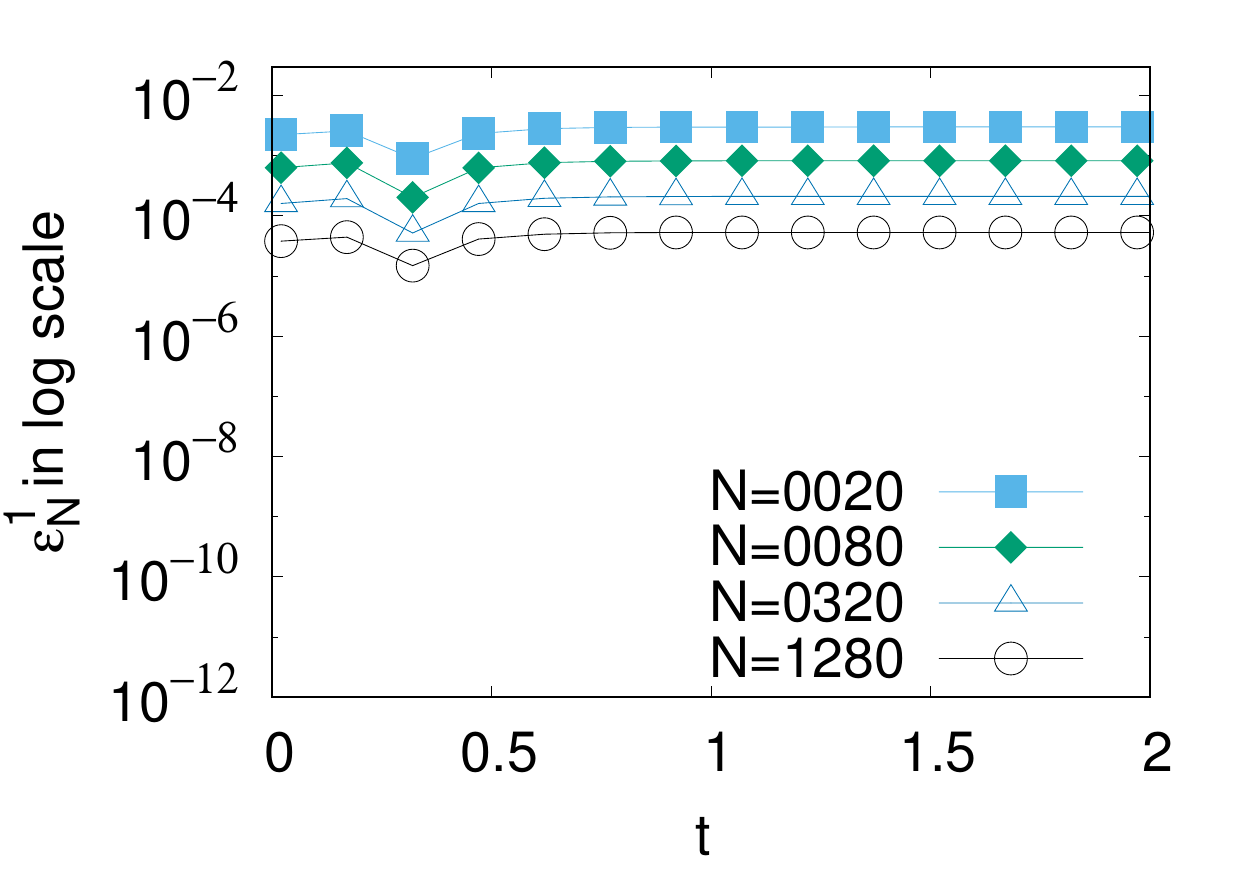}
	  \\
	  (a) & (b)
	\end{tabular}
	\caption{\label{fig:cv:1}
	  {\bf Proof of concept.} Time evolution of the $\epsilon_N^1$
	  error for (a) the $\phi$-entropic scheme and (b) the classical
	  upwind scheme.}
      \end{center}
    \end{figure}
    The accurate long-time behavior is confirmed by Figure~\ref{fig:cv:2} where the time variation of the distance to the discrete solution is represented. While the upwind scheme saturates quickly, our scheme reproduces perfectly the exponential decay to $0$ of the solution, even if $N=40$, illustrating the result of Theorem~\ref{longtimetheo}.
    \begin{figure}[H]
      \includegraphics[width=7.5cm]{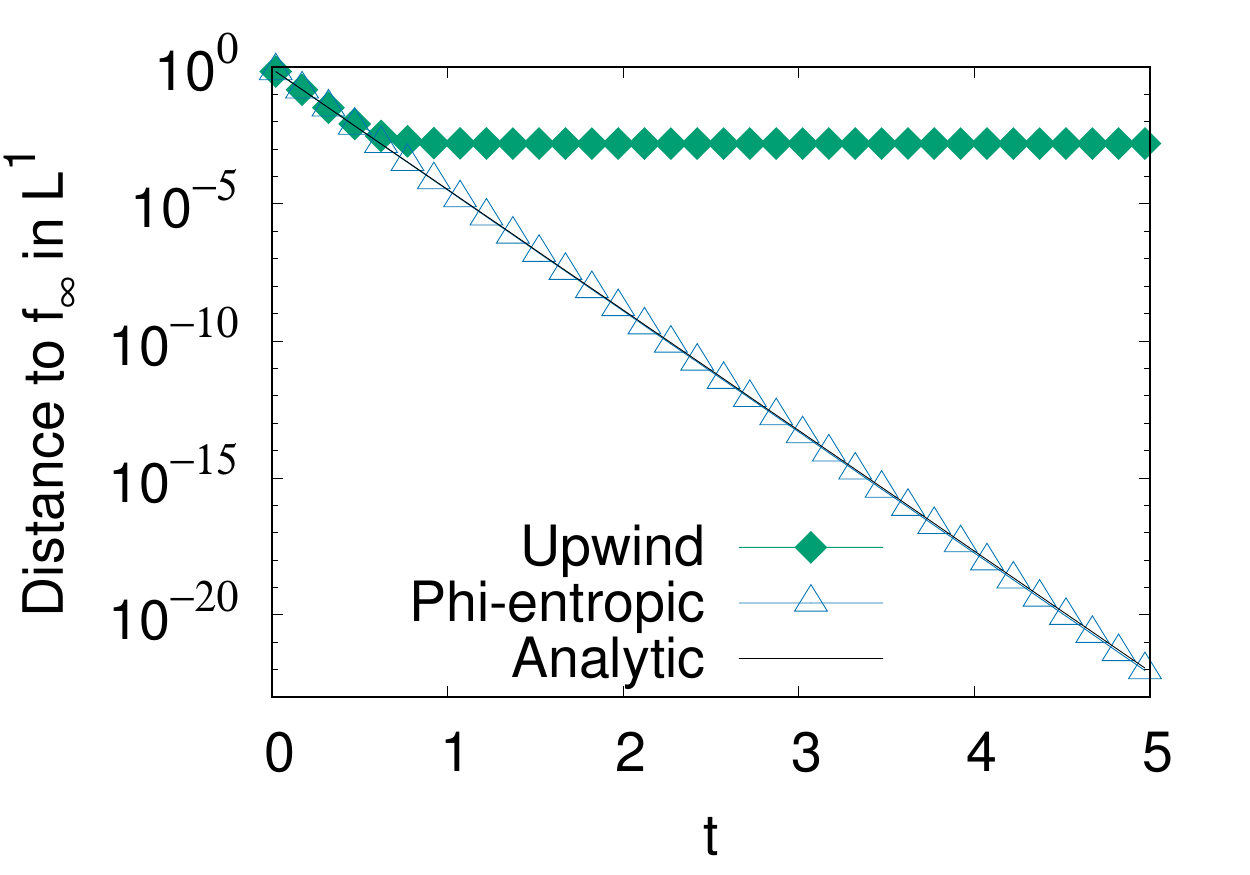}
      \caption{\label{fig:cv:2}{\bf Proof of concept.} Time evolution of the $L^1(\Omega)$ distance to the discrete steady state for $N=40$.}
    \end{figure}

    \subsection{Fokker-Planck with magnetic field }\label{test1}
    We now consider the two-dimensional version of homogeneous Fokker-Planck equation with an
    external magnetic field
    $$
    \left\{
    \begin{array}{l}
      \displaystyle \frac{\partial f}{\partial t} + b\,\bv^\perp\cdot\nabla_\bv f = \nabla_\bv \cdot(\bv f + \nabla_\bv f) \quad{\rm in}\quad\RR^+\times\RR^2\,,
      \\
      \;
      \\
      \ds f(t=0) = f_0 \quad{\rm in}\quad \RR^2\,.
    \end{array}
    \right.
    $$
    The external magnetic field is along a third direction that is orthogonal to the plane under consideration and has amplitude $b= 4$. Compared to the 3D case, the vector $\bv^\perp\, =\, (v_y, -v_x)$ replaces the cross product between $\bv$ and the direction of the magnetic field. The initial datum  $f_0$ is given by the sum of two Gaussian distributions
    $$
    f_0(\bv) = \frac{1}{2\pi}\left[ \alpha\exp\left(-\frac{|\bv-\bv_1|^2}{2}\right)  \,+\, (1-\alpha)\exp\left(-\frac{|\bv-\bv_2|^2}{2}\right) \right],
    $$
    with $\alpha=3/4$, $\bv_1=(-1,2)$ and $\bv_2=(2,-1)$.

    This equation is solved numerically in a bounded domain
    $\Omega=(-8,8)^2$ on various regular Cartesian meshes from $N=40^2$ to $N=640^2$
    points. We use our implicit scheme \eqref{schemeimp},\eqref{defflux}-\eqref{dissflux} with a time step $\Delta t=0.001$ until the final time $T=10$. We choose  non homogeneous Dirichlet boundary conditions $f^b=f^\infty$,
    where $f^\infty$ is the  steady
    state, that is,  the Maxwellian distribution 
    $$
    f^\infty(\bv) \,=\, \frac{1}{2 \pi} \exp\left(-\frac{|\bv|^2}{2}\right). 
    $$
    Here the knowledge of the steady state $f^\infty$ allows us to compute the steady flux $\F[K]^\infty$ analytically. Indeed, since $\bv \finf + \nabla_\bv \finf = 0$, one only needs to evaluate
    \[
    \F[K]^\infty = b\,\int_\sigma\,\bn_{K,\sigma}\cdot\bv^\perp\,f^\infty = b\int_\sigma\bn_{K,\sigma}^\perp\cdot\nabla_\bv f^\infty\,,
    \]
    which is, up to a multiplicative constant, the difference between $\finf$ evaluated at each endpoint of $\sigma$.
    
    We recall that the physical entropy generating function is $\phi_1:s\mapsto s\log(s)-s+1$. In Figure~\ref{fig:fp:1}, we represent the time evolution
    of the entropy $H_{\phi_1}$, the physical dissipation $D_{\phi_1}$
    and the ratio between the numerical and physical dissipation
    $C_{\phi_1}/D_{\phi_1}$ in log scale.  On one hand, $H_{\phi_1}$ and $D_{\phi_1}$ decay exponentially and are well approximated when $N$ is larger than $160^2$. Figure~\ref{fig:fp:1} (a) and (b) also illustrate the convergence to equilibrium
    at exponential rate, when time goes to infinity. On the other hand, the numerical dissipation $C_{\phi_1}$  converges to zero when the space step goes to
    zero, but since the scheme is only first order accurate, it is
    relatively slow. Besides, the  numerical dissipation  $C_{\phi_1}$
    also converges to zero at the same exponential rate than  $D_{\phi_1}$,
    hence it does not affect the accuracy on the decay
    rate for large time. Also note that for the chosen meshes the numerical dissipation is smaller than the physical dissipation. From these numerical experiments, we get some
    numerical evidence of the uniform accuracy of the scheme with respect to time.

    \begin{figure}[H]
      \begin{center}
	\begin{tabular}{cc}
	  \includegraphics[width=.49\linewidth]{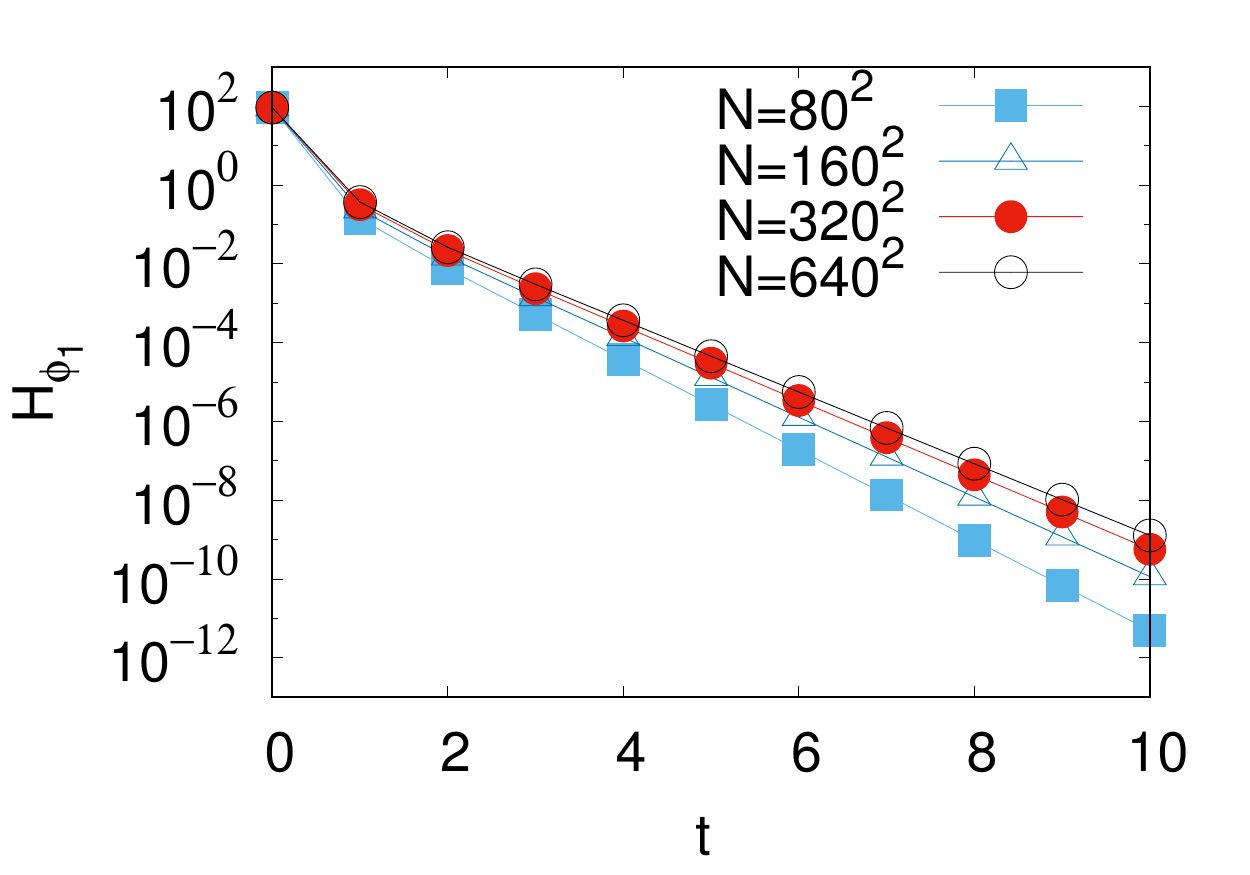} &   
	  \includegraphics[width=.49\linewidth]{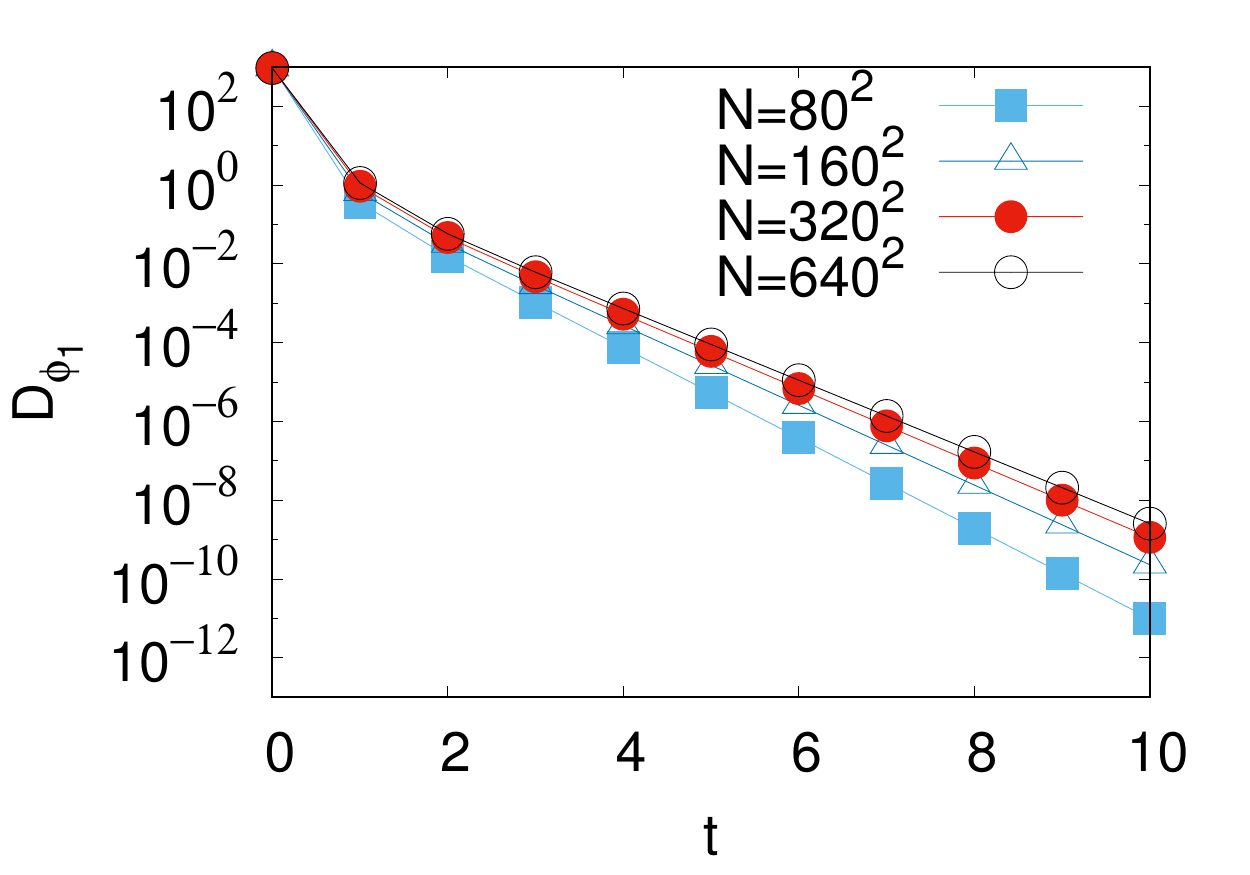}
	  \\
	  (a) & (b)
	\end{tabular}
	\begin{tabular}{c}
	  \includegraphics[width=.49\linewidth]{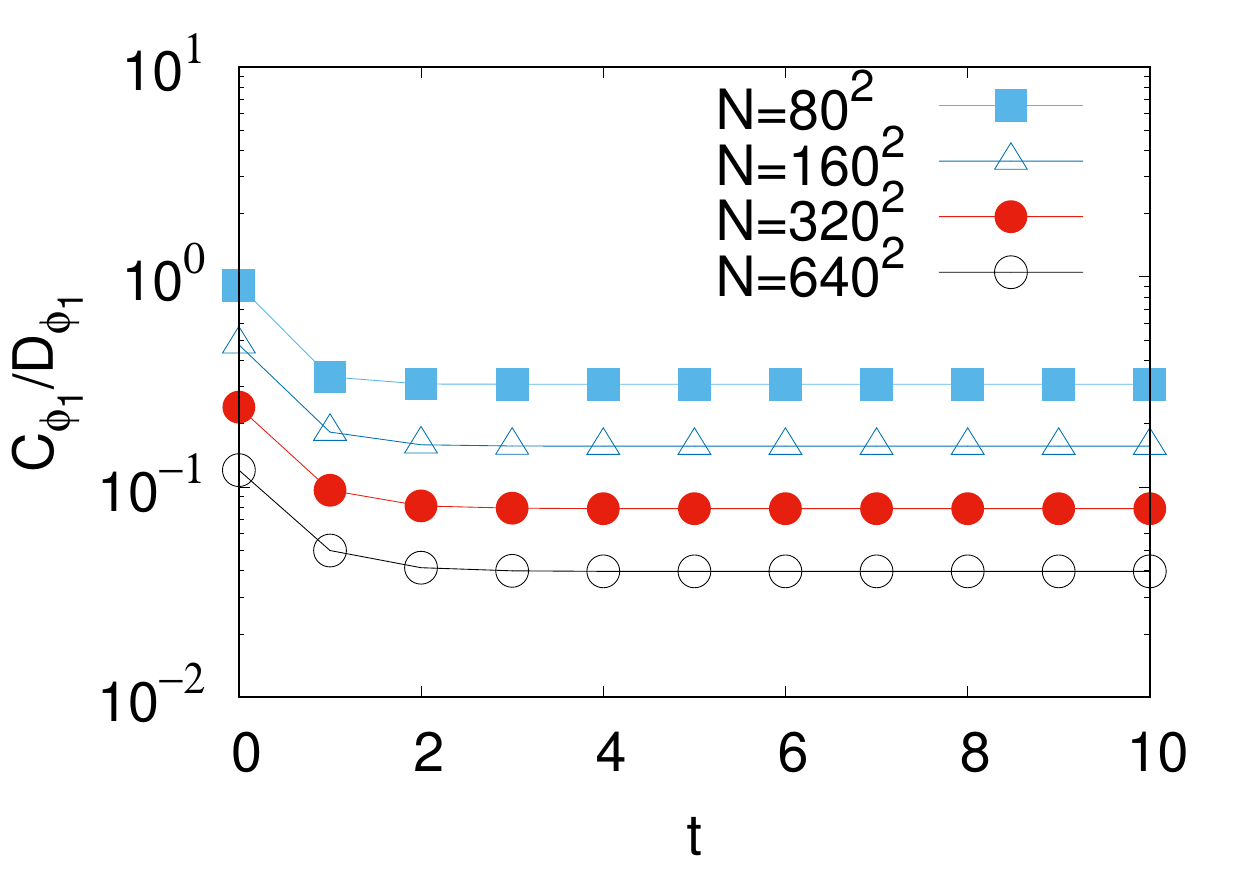}
	  \\
	  (c)
	\end{tabular}
	\caption{\label{fig:fp:1}
	  {\bf Fokker-Planck equation with magnetic field.}  Time
	  evolution of (a) the entropy $H_{\phi_1}$ (b) the physical dissipation $D_{\phi_1}$
	  and (c) the (normalized) numerical dissipation $C_{\phi_1}/H_{\phi_1}$.}
      \end{center}
    \end{figure}
    
    \begin{rema}
      For the same mesh if we take a larger magnetic field, the numerical
      dissipation can become larger than the physical one. Even if both of them
      seem to converge to zero with the same decay rate, the dissipation is
      amplified. 
    \end{rema}

    Finally, in Figures \ref{fig:fp:3}, we propose the
    time evolution of the distribution function at different times. The
    black lines represent the isovalues $f(t,\bv)\equiv 1.10^{-5}$, $3.10^{-4}$, $5.10^{-3}$, $2.5.10^{-3}$, $7.10^{-2}$, $1.2.10^{-1}$, $1.5.10^{-1}$ of the
    distribution function.  We observe the effect of the
    magnetic field by the rotation of the two bumps and under the effect
    of the Fokker-Planck operator, the solution converges to a Maxwellian
    distribution.   
    
    \begin{figure}[H]
      \begin{center}
	\begin{tabular}{cc}
	  \includegraphics[width=.49\linewidth]{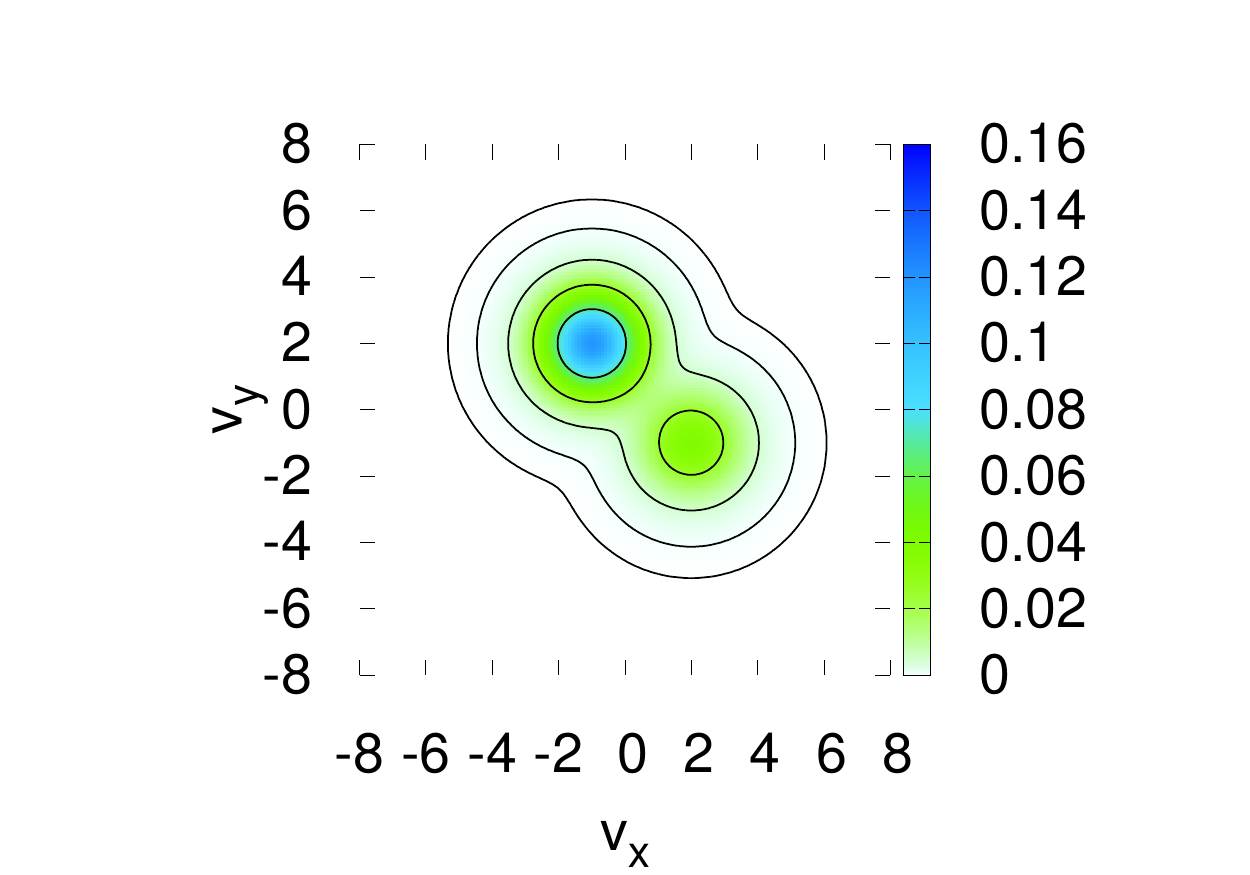} &   
	  \includegraphics[width=.49\linewidth]{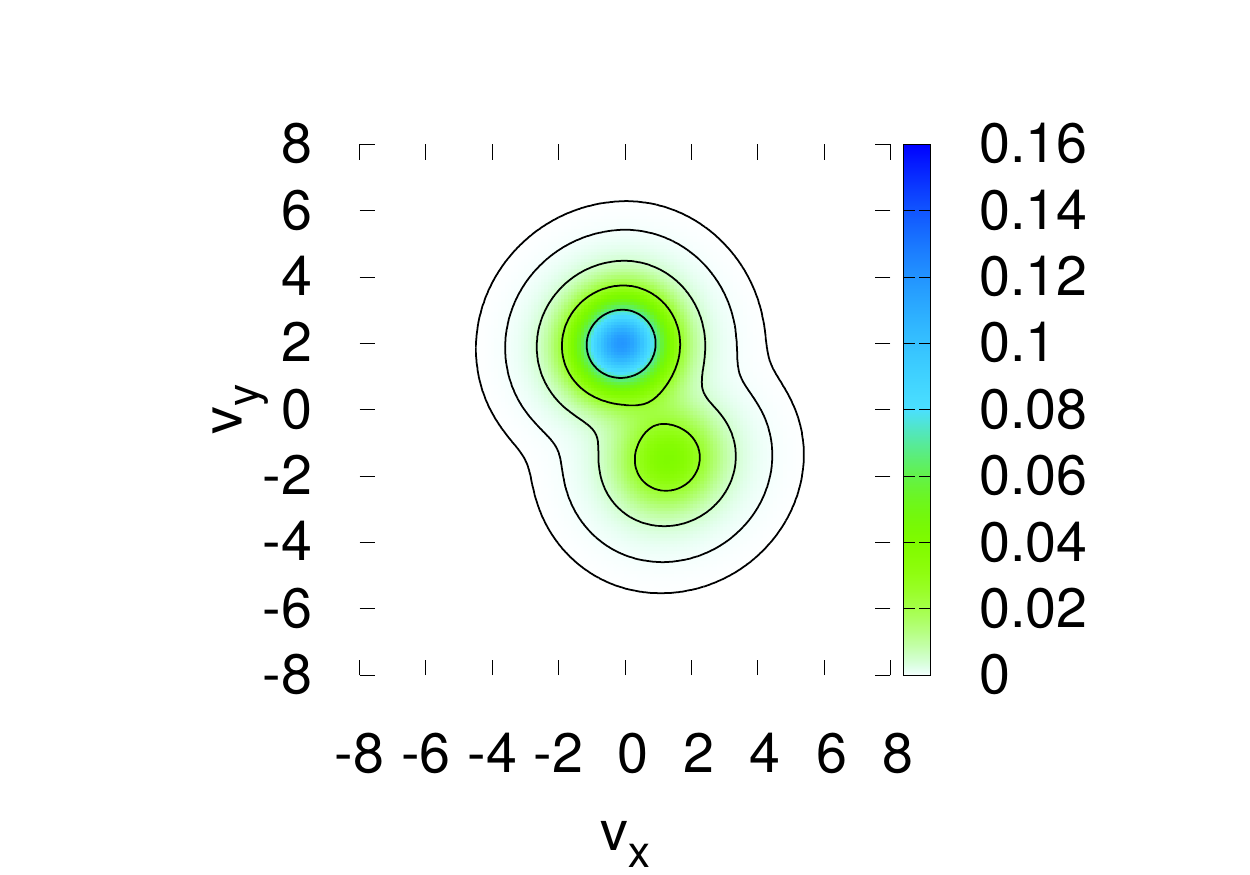}    
	  \\
	  $t=0$ & $t=0.1$
	  \\
	  \includegraphics[width=.49\linewidth]{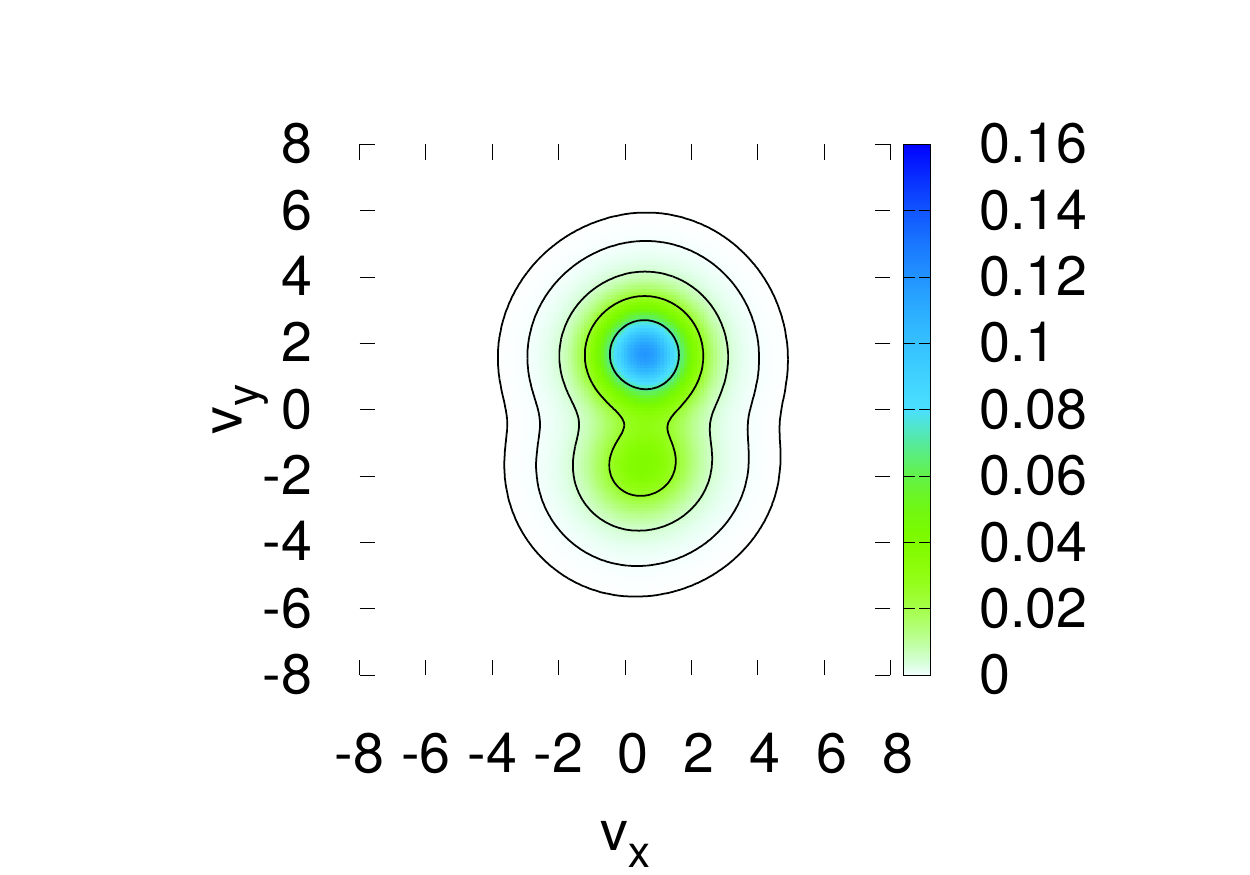} &
	  \includegraphics[width=.49\linewidth]{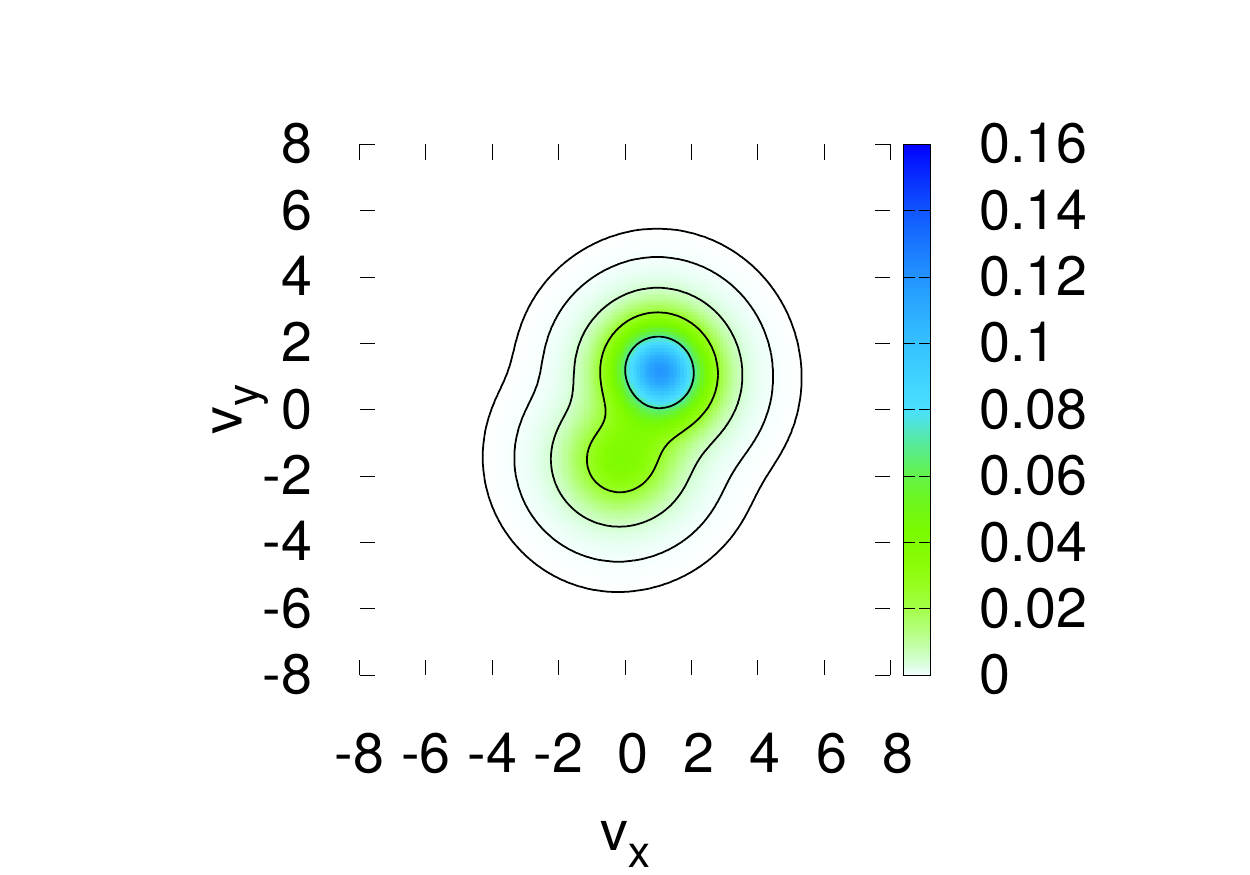}
	  \\
	  $t=0.2$ & $t=0.3$
	  \\
	  \includegraphics[width=.49\linewidth]{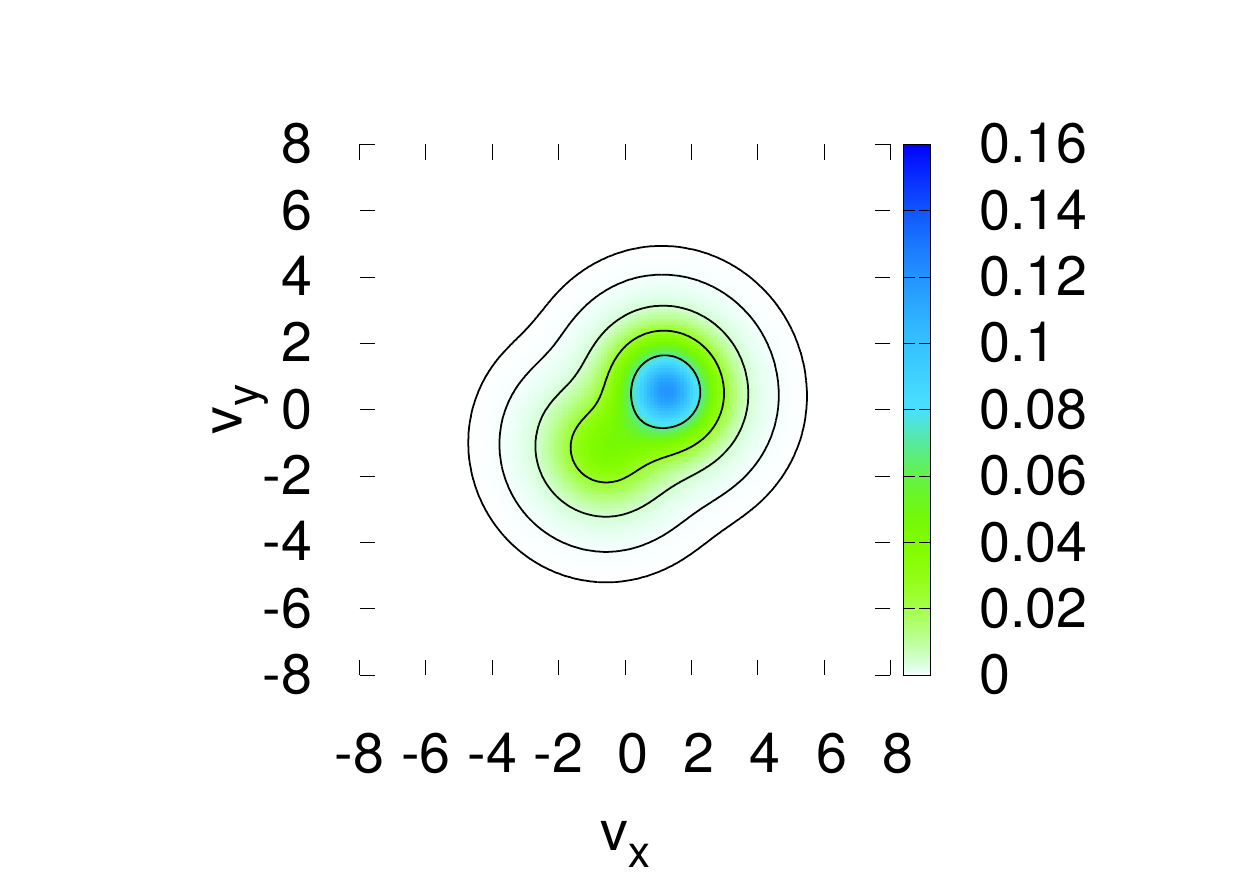} &   
	  \includegraphics[width=.49\linewidth]{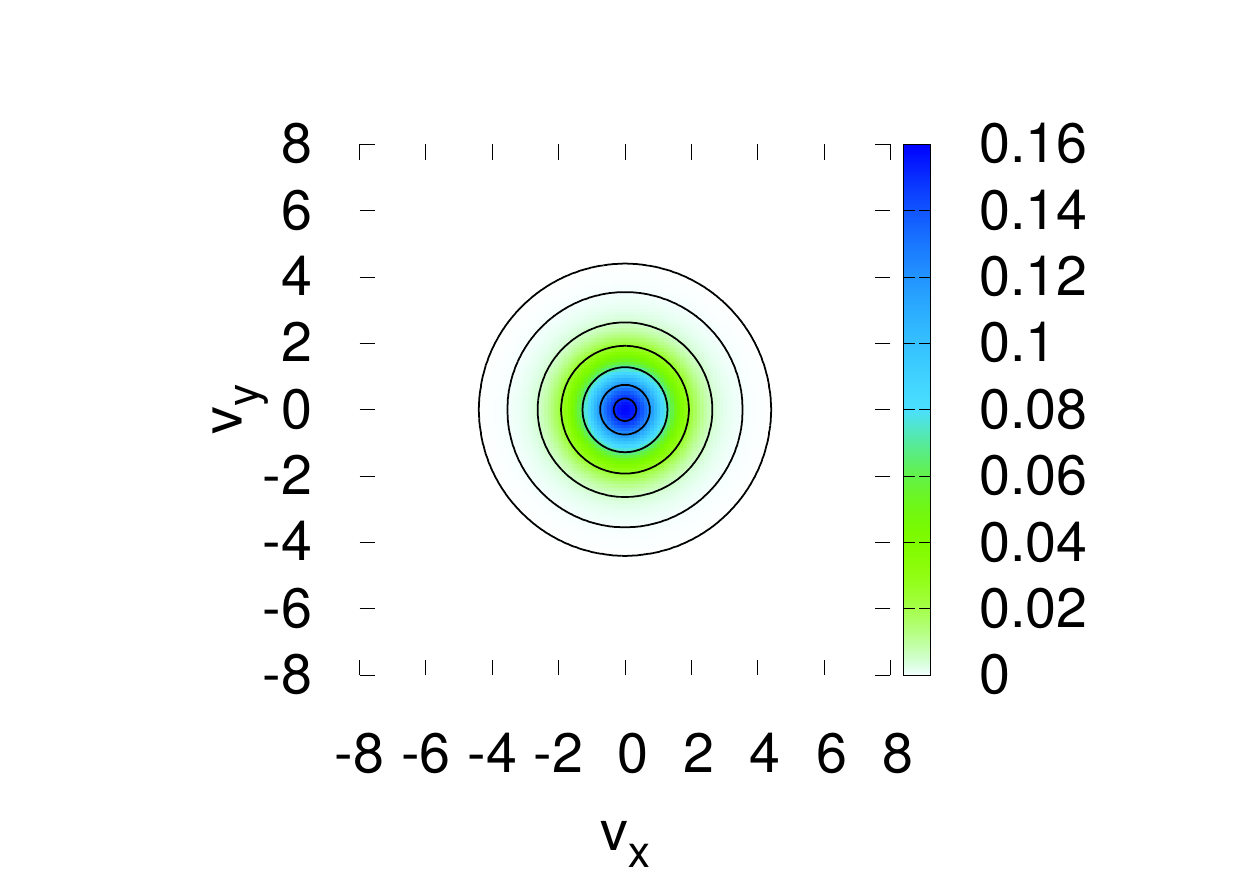} 
	  \\
	  $t=0.4$ & $t=\infty$
	\end{tabular}
	\caption{\label{fig:fp:3}{\bf Fokker-Planck equation with magnetic field.}  Time evolution of the distribution on the fine mesh $N=160^2$.}
      \end{center}
      
    \end{figure}
    
    \subsection{Polymer flow in a dilute solution}\label{test2}
    We investigate the numerical approximation of the Fokker-Planck part of the kinetic
    Fokker-Planck equation for polymers \cite{masmoudi_2008_well} 
    $$
    \left\{
    \begin{array}{l}
      \displaystyle \frac{\partial F}{\partial t} =
      -\nabla_\bk\cdot\left[\left(\bA\,\bk -
      \frac{1}{2}\nabla_\bk\Pi(\bk)\right) F -  \frac{1}{2}\nabla_\bk F\right].
      \\
      \;
      \\
      \ds F(t=0) = F_0 \quad{\rm in}\quad \Omega\subset\RR^3,
    \end{array}
    \right.
    $$
    where the matrix $\bA$ represents the gradient of an external velocity
    field and  is given by
    $$
    \bA = \left( \begin{array}{lll} 1/4 & -1/2 & 0 \\ 1/2 & -1/4 & 0 \\ 0
      & 0 & 0\end{array}\right)\,.
    $$
    The domain is $\Omega=(-4,4)^3$ and we choose the Hookean model $\Pi(\bk)= |\bk|^2/2$. The initial datum  $F_0$ is given by the sum of two Gaussian distributions
    $$
    F_0(\bk) = \frac{1}{2\, (2\pi)^{3/2}}\left[ \exp\left(-\frac{|\bk-\bk_1|^2}{2}\right)  \,+\, \exp\left(-\frac{|\bk-\bk_2|^2}{2}\right) \right],
    $$
    with $\bk_1=(-3/2,1,0)$ and $\bk_2=(1,-3/2,0)$. This
    equation is supplemented
    with  homogeneous Neumann boundary conditions such that global mass is
    conserved. For numerical simulations we choose various meshes from
    $N=24^3$ to $64^3$ points with $\Delta t = 0.01$ using a time implicit scheme until $T=5$. In this case, the steady state is not known, hence the
    steady equation is first solved numerically to compute a
    consistent approximation of the equilibrium $(f_K^\infty)_{K\in\T}$  and  the stationary flux $\F[K]^\infty$.
    If the matrix $\bA$ were equal to zero we would expect the steady state to be close to a Gaussian
    $
    G(\bk) = \exp(-\Pi(\bk))\,,
    $
    at least far from edges since $G$ do not satisfy the boundary conditions. Thus we can expect $\finf/G$ to be close to some constant in the domain, which seems easier to approximate numerically. Hence for all $K\in\T$ and $\sigma\in\Ei$, we define the quantities $G_K$ and $G_\sigma$ as the evaluation of $G$ at the center of the respective cell or edge, as well as
    \[
    h^\infty_K = \frac{\finf_K}{G_K}\,.
    \]
    The scheme is solved on this new unknown $h^\infty$. More precisely, it is given by \eqref{stat} with the flux
    \[
    \F[K]^\infty = 
    m(\sigma)\,G_\sigma\,\left[A_{K,\sigma}^{+}\,h^\infty_K - A_{K,\sigma}^{-}\,h^\infty_L-\frac{1}{2\,d_\sigma}\left(h^\infty_L-h^\infty_K\right)\right]\,,
    \]
    if $\sigma = K|L$ and $\F[K]^\infty = 0$ if $\sigma\in\Ee$. The quantity $A_{K,\sigma}$ is the evaluation of $(\bA\bk)\cdot\bn_{K,\sigma}$ at the center of the edge $\sigma$.  Besides, because of the conservative boundary conditions, we need to specify the mass of the steady state to be that of the initial data in order to get a unique solution to the scheme. Then we define the steady state on interior edges by $\finf_\sigma = (\finf_K+\finf_L)/2$ if $\sigma=K|L$. Observe that this scheme is consistent with the following equation
    \[
    {\nabla}\cdot\left[G(\bk)((\bA\bk)\,h^\infty -
    \frac 12{\nabla} h^\infty)\right] = 0\,,
    \]
    which is only a reformulation of the original steady equation.

    In Figure~ \ref{fig:poly:1}, we  represent the time evolution
    of the entropy $H_{\phi_2}$, its dissipation  $D_{\phi_2}$
    and the numerical dissipation due to the convective term
    $C_{\phi_2}$ in log scale.  First when $N\geq 32^3$ points, the
    entropy and the physical dissipation are well approximated compared to
    the solution computed with a fine mesh $N\geq 64^3$. Once again both of
    them are decreasing function of time and converge to zero with an
    exponential decay rate.  The numerical dissipation of
    the convective term $C_{\phi_2}$ is much smaller than the physical
    one and also converges to zero when times goes to infinity
    exponentially fast, hence it does not affect the accuracy on the decay
    rate for large time.

    \begin{figure}[H]
      \begin{center}
	\begin{tabular}{cc}
	  \includegraphics[width=.49\linewidth]{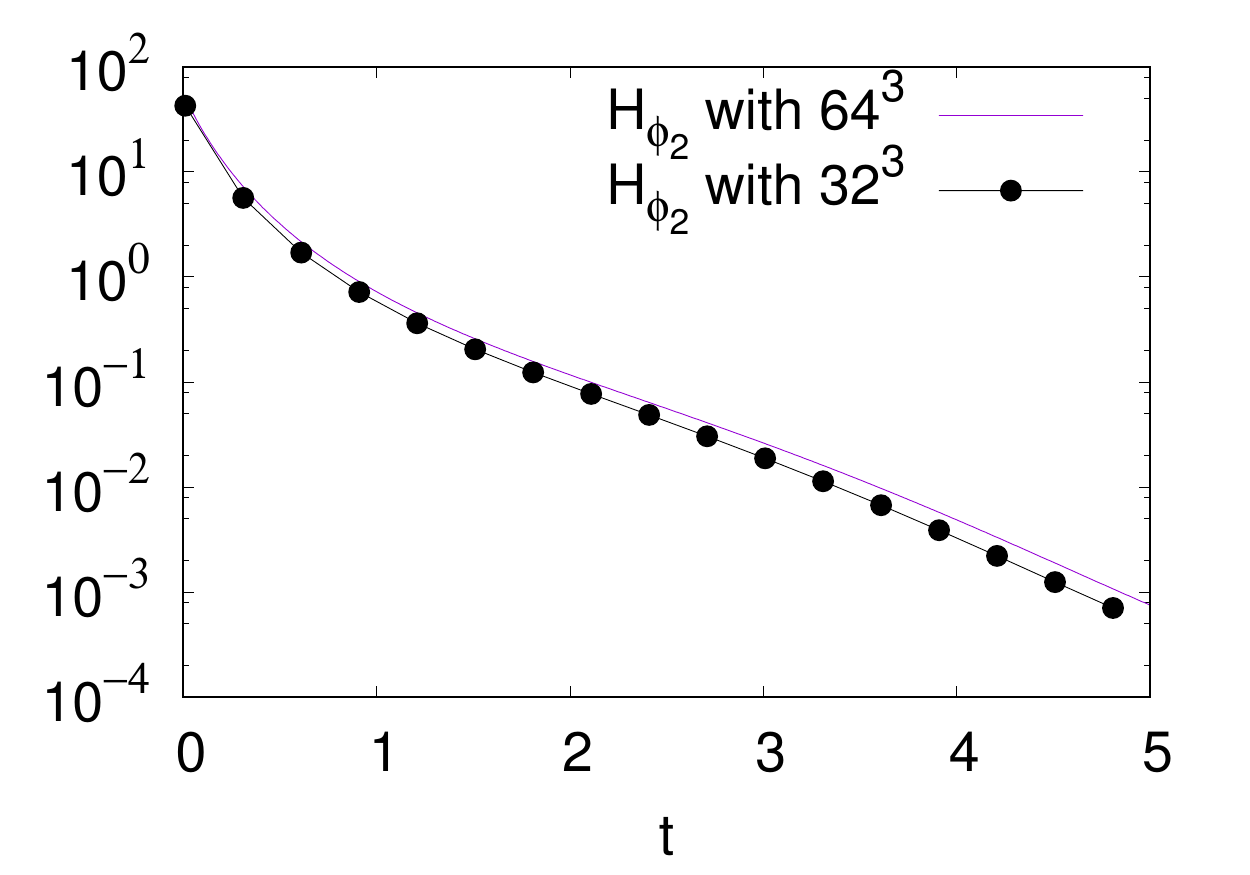} &   
	  \includegraphics[width=.49\linewidth]{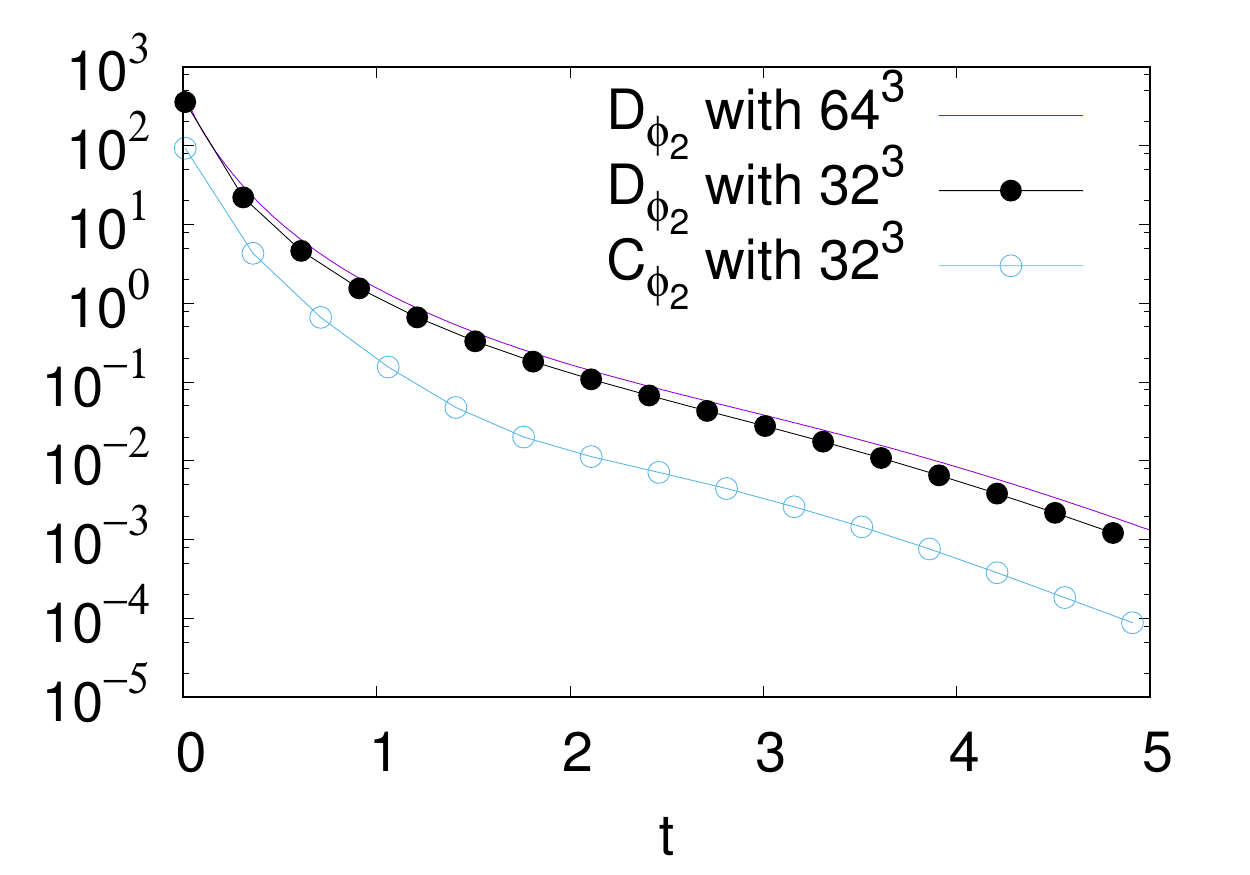}
	  \\
	  (a) & (b) 
	\end{tabular}
	\caption{\label{fig:poly:1}{\bf Polymer flow in a dilute solution.}  Time
	  evolution of the $2$-entropy $H_{\phi_2}$ and the corresponding physical dissipation 
	  and  numerical dissipation $(D_{\phi_2},C_{\phi_2})$  with $N=32^3$ mesh points.}
      \end{center}
    \end{figure}
    
    Finally, in Figure \ref{fig:poly:2}, we set forth the
    time evolution of the distribution function at different time. The
    first column represents an isovalue $f(t,\bk)\equiv 0.02$ of the
    distribution function whereas the second column is a two dimensional
    projection in the plane $k_x-k_y$, the solution converges to the
    discrete steady state, which is consistent with the
    equilibrium.

    \begin{figure}[H]
      \begin{center}
	\begin{tabular}{cc}
	  \includegraphics[width=.49\linewidth]{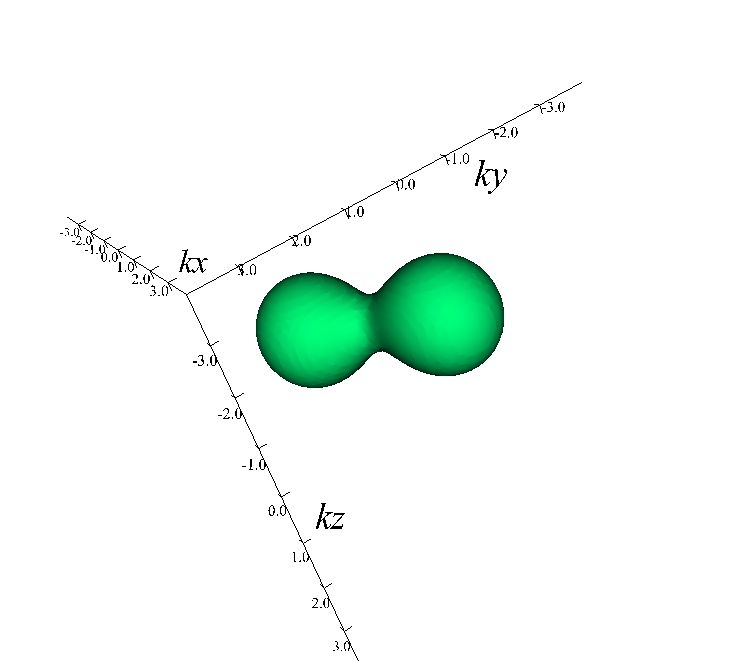} &
	  \includegraphics[width=.49\linewidth]{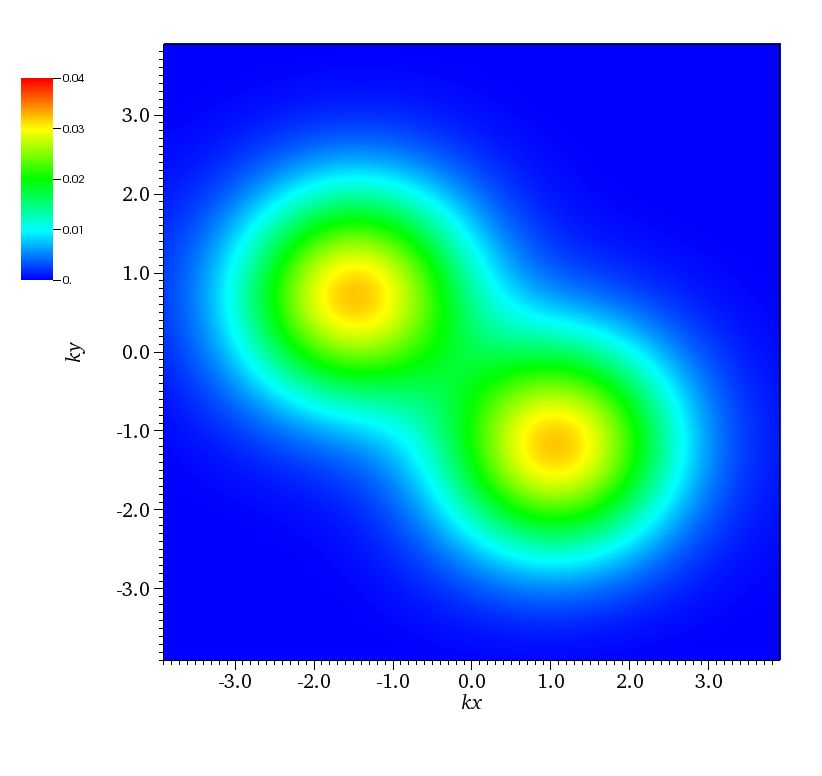}
	  \\    
	  \includegraphics[width=.49\linewidth]{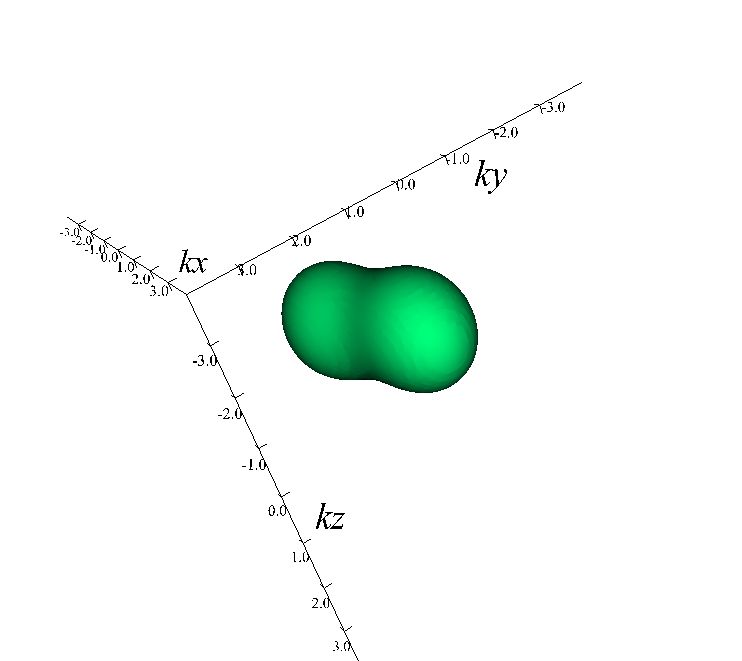} &
	  \includegraphics[width=.49\linewidth]{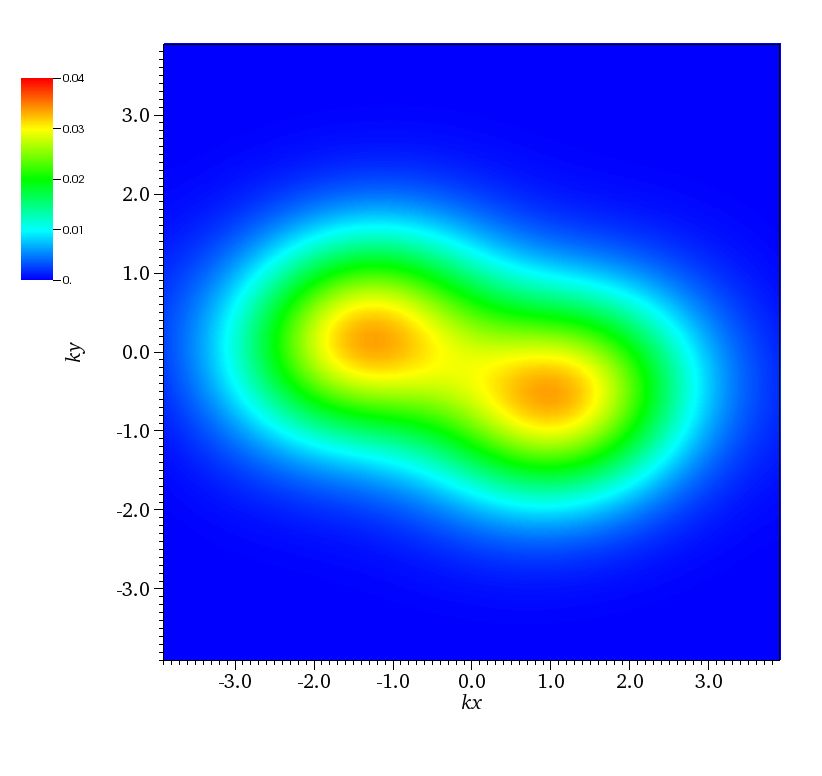} 
	  \\
	  \includegraphics[width=.49\linewidth]{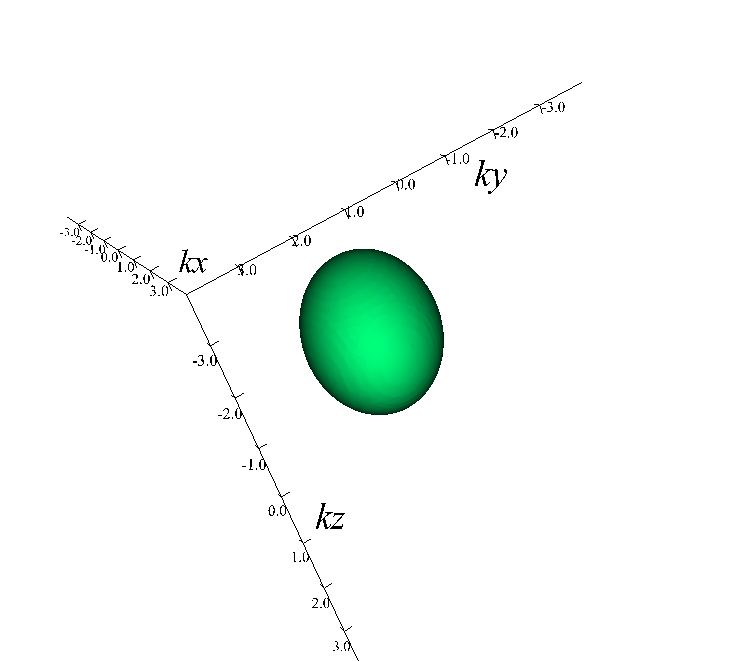} &
	  \includegraphics[width=.49\linewidth]{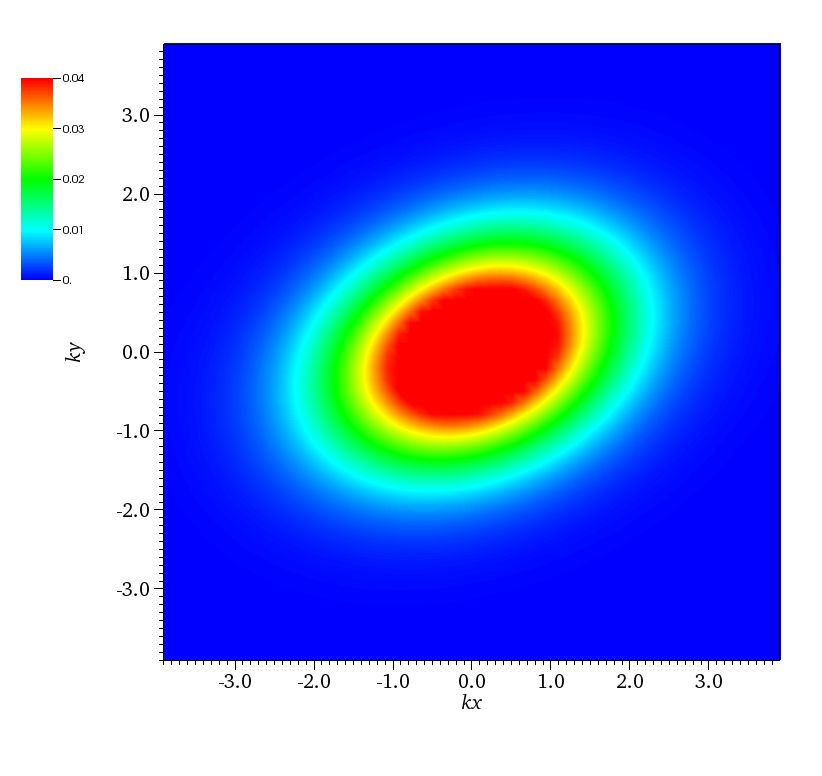}
	\end{tabular}
	\caption{\label{fig:poly:2}
	  {\bf Polymer flow in a dilute solution.} (a) one
	  isovalue $F(t,\bk)=0.02$ (b) $k_x-k_y$ projection of the distribution
	  in the $\bk$ space at time $t=0.2$,
	  $t=0.7$ and $t=5$. }
      \end{center}
    \end{figure}
    
    \subsection{Porous medium equation}\label{test3}
    We finally study the numerical approximation of the porous medium equation
    $$
    \left\{
    \begin{array}{l}
      \displaystyle \frac{\partial f}{\partial t} = \Delta f^m\,,
      \\
      \;
      \\
      \ds f(t=0) = f_0 \quad{\rm in}\, \Omega=(0,1)\times (-1,1)^2,
    \end{array}
    \right.
    $$
    with $m=2$ and $f_0\equiv 0$ together with the non-homogeneous Dirichlet boundary
    conditions
    $$
    f^b = 
    \left \{
    \begin{array}{ll} 
      2.5\,, & \textrm{ if } \,  x=1 \textrm{ and }    y^2 + z^2 \leq 1/8\,,
      \\
      \,
      \\
      1\,, &\textrm{ else}.
    \end{array}\right.
    $$
    This model is nonlinear, with $\eta(s) = s^m$, and without convective terms. Moreover, since $\eta'(0)=0$ the equation is degenerate. As a consequence, with our choice of initial data and boundary conditions, we expect the solution to be equal to zero on some subset of $\omega\subset\Omega$, having non-zero measure, for some  positive time. 
    
    Since the steady state is not known, we first compute a numerical approximation $(f_K^\infty)_{K\in\T}$
    and the corresponding  stationary flux $\F[K]^\infty$ using the scheme given in Example~\ref{exampFlux}, which amounts to solving \eqref{stat} with
    \[
    \F[K]^\infty\ =\ -\tau_\sigma\,D_{K,\sigma}\eta(\finf)\,.
    \]
    Then from the knowledge of $\eta(\finf)\in\X[\eta(f^b)]^{N_T+1}$, we set $\finf_K = \eta^{-1}(\eta(\finf)_K)$ on each cell $K\in\T$ and $\eta(\finf)_\sigma = (\eta(\finf)_K + \eta(\finf)_L)/2$ for interior edges $\sigma = K|L$.
    
    For the numerical simulations we choose two Cartesian meshes with
    $N=30^3$ and $60^3$ points using the explicit scheme \eqref{schemeexp}-\eqref{dissflux}. Hence
    the time step now satisfies a CFL condition $\Delta t = O(\Delta
    x^2)$ (precisely defined in Theorem \ref{mainexp}) which is satisfied for the two meshes with
    $\Delta t = 10^{-5}$. In Figure \ref{fig:porous:1}, we represent the
    time evolution of the relative entropy and the physical and numerical
    dissipation until  final time $T=0.5$. These results are in good agreement with
    those obtained using a finer mesh and the numerical dissipation is
    several orders of magnitude smaller than the physical
    dissipation. 
    \begin{figure}[H]
      \begin{center} 
	\begin{tabular}{cc}
	  \includegraphics[width=.49\linewidth]{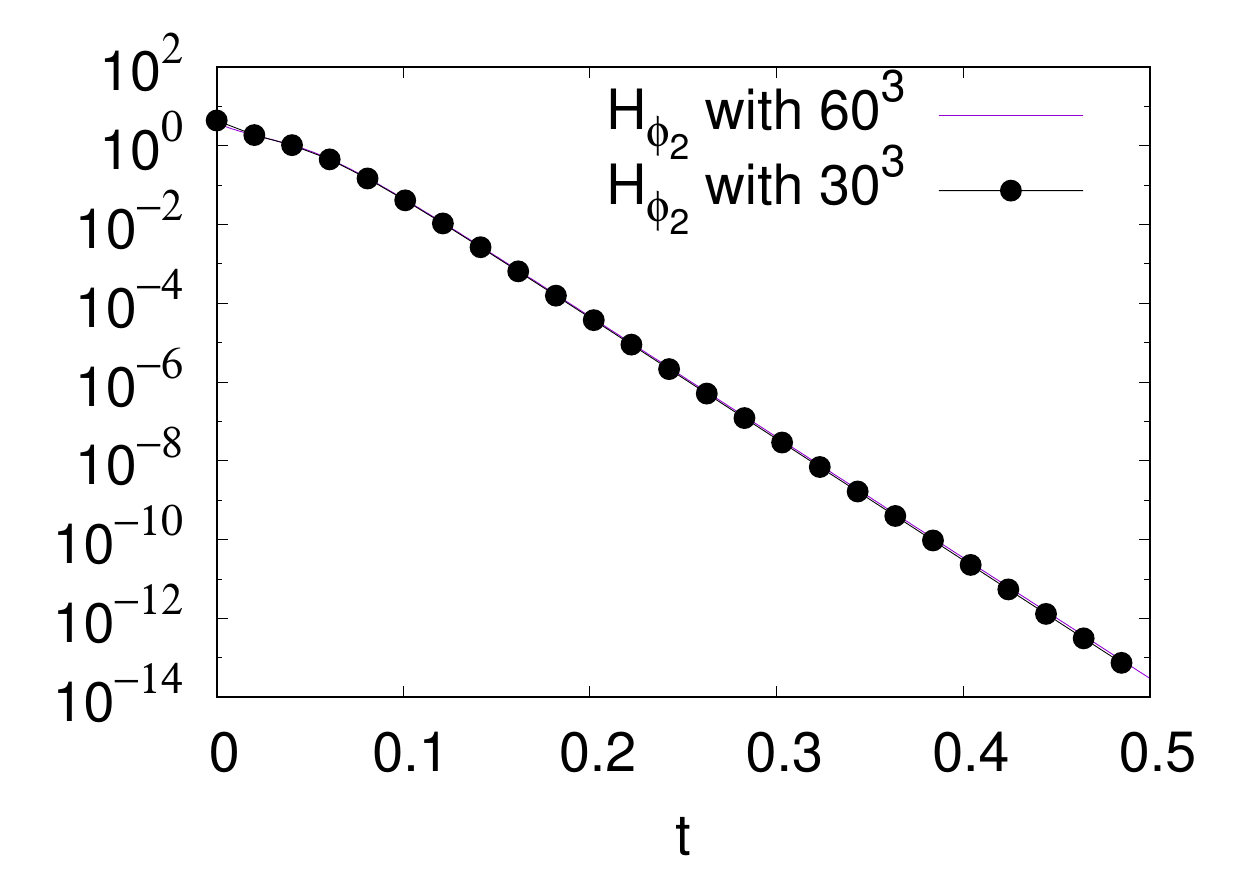} &   
	  \includegraphics[width=.49\linewidth]{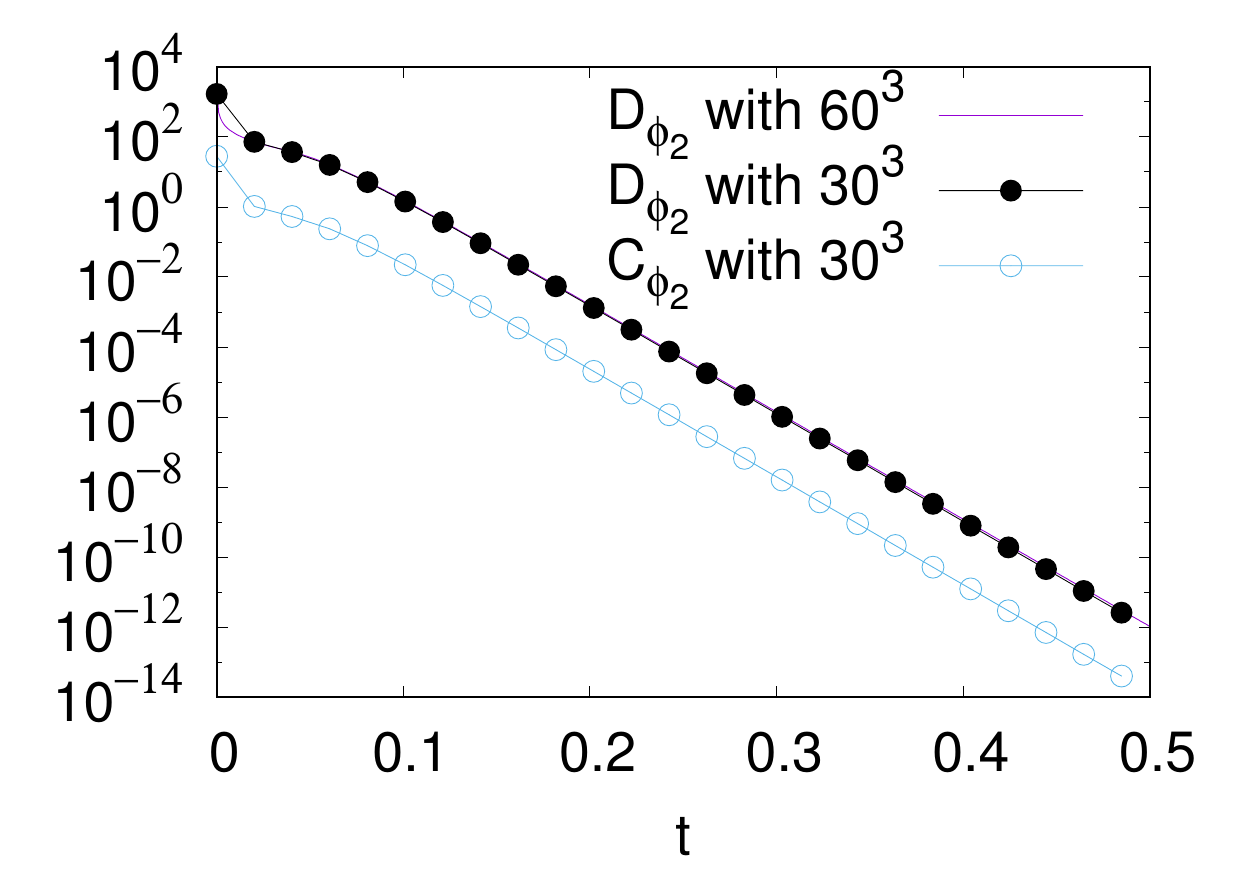}
	  \\
	  (a) & (b)
	\end{tabular}
	\caption{\label{fig:porous:1}
	  {\bf Porous medium equation.}  Time
	  evolution of the $2$-entropy $H_{\phi_2}$ and the corresponding physical dissipation 
	  and  numerical dissipation $(D_{\phi_2},C_{\phi_2})$  with   $N=30^3$ mesh points.}
      \end{center}
    \end{figure}
    
    Finally in Figure \ref{fig:porous:2} we represent the intersection of the graph of the  discrete solution with the plane $z=0$ at different time. The black line represents the isovalue $10^{-16}$ that surrounds the zone where the diffusion has not yet happened, namely where the distribution is null. This illustrates the good behavior of our scheme with respect to the degeneracy of the porous medium equation.
    
    \begin{figure}[H]
      \begin{center}
	\begin{tabular}{ccc}
	  \includegraphics[width=.33\linewidth]{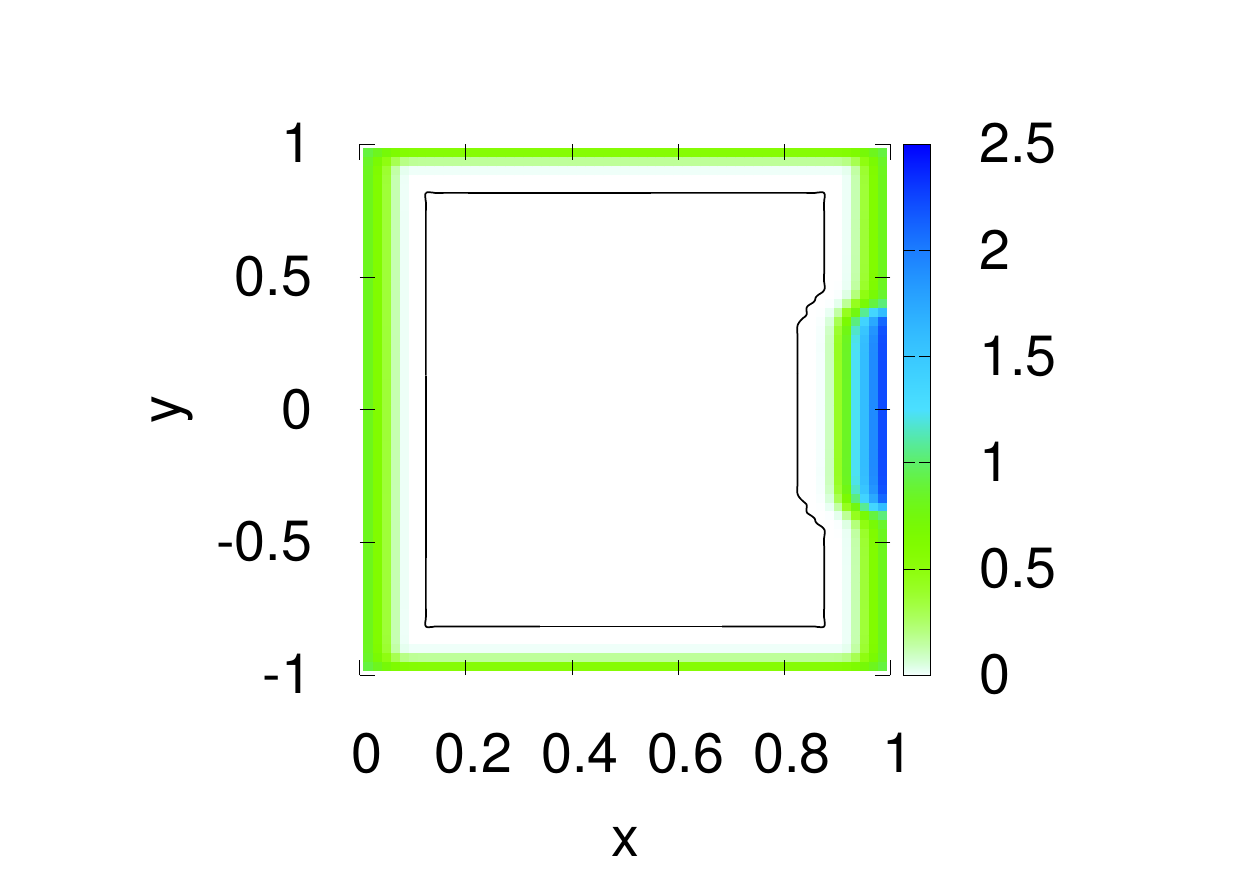} &   
	  \includegraphics[width=.33\linewidth]{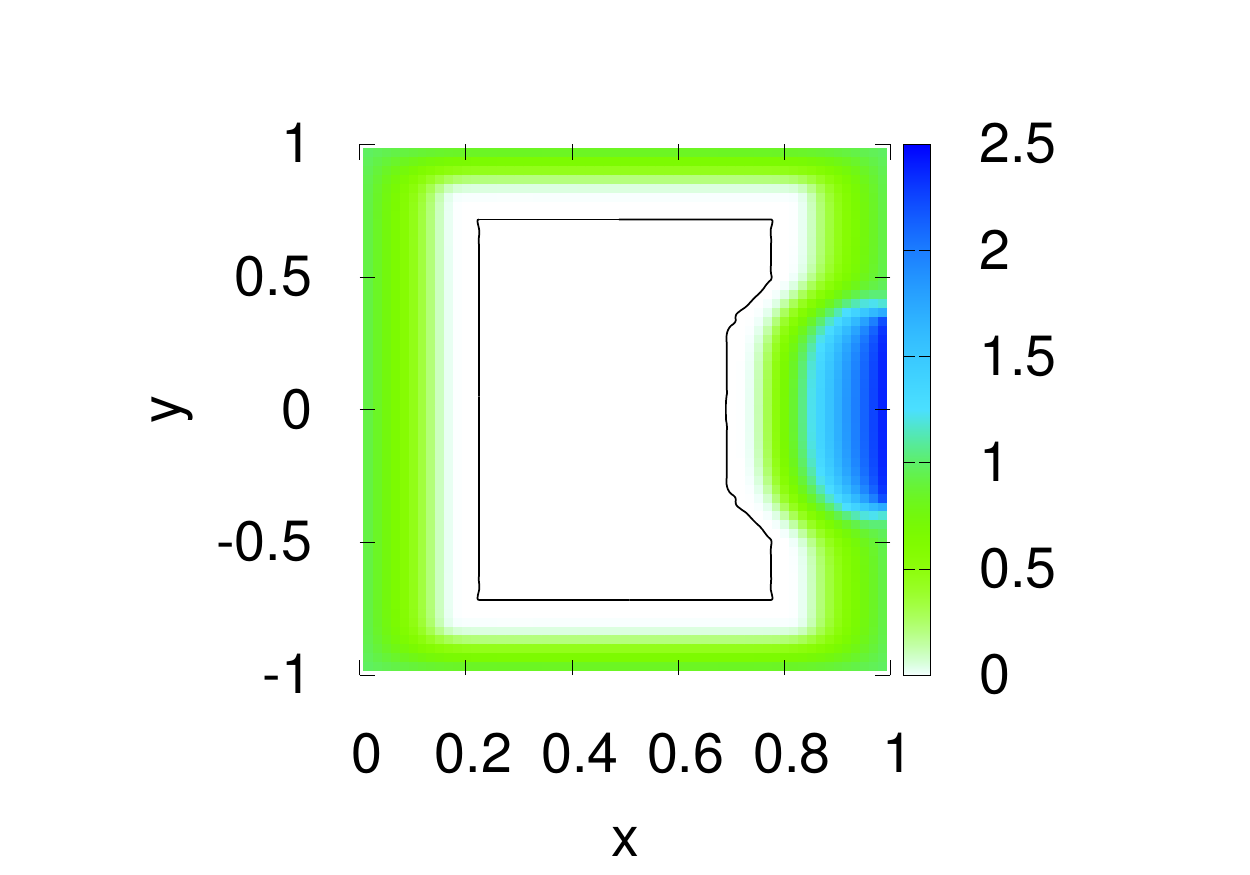} &
	  \includegraphics[width=.33\linewidth]{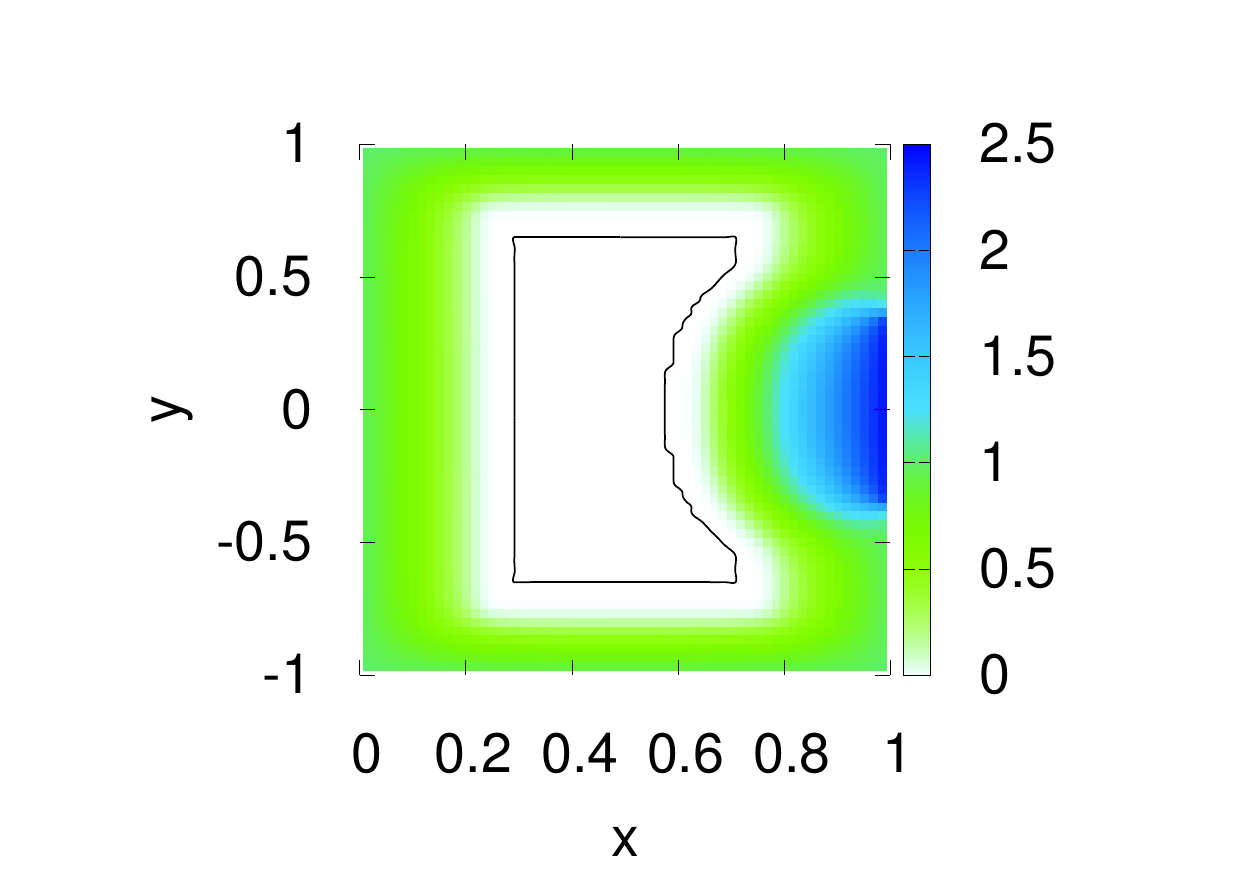} \\
	  $t=0.001$ & $t=0.005$&$t=0.01$ \\
	  \includegraphics[width=.33\linewidth]{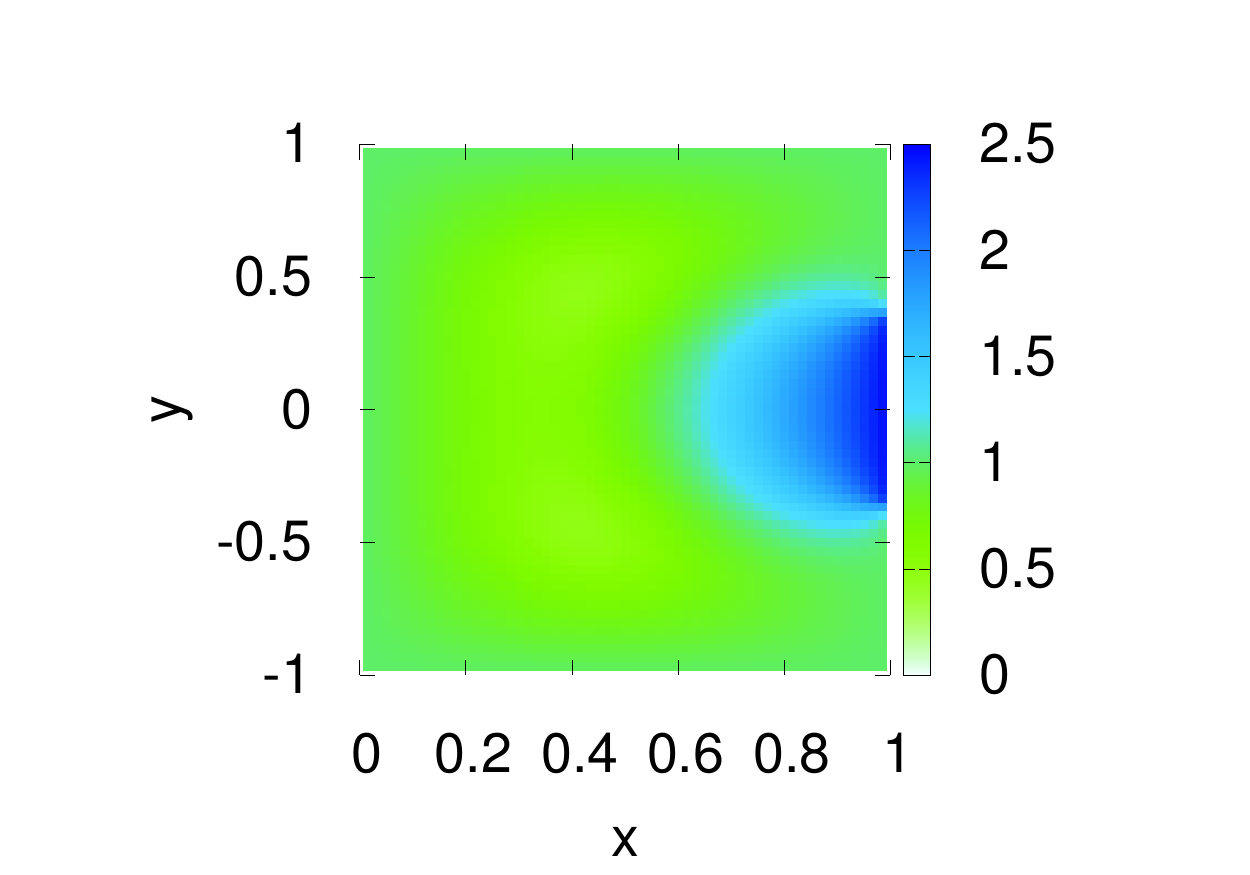} &   
	  \includegraphics[width=.33\linewidth]{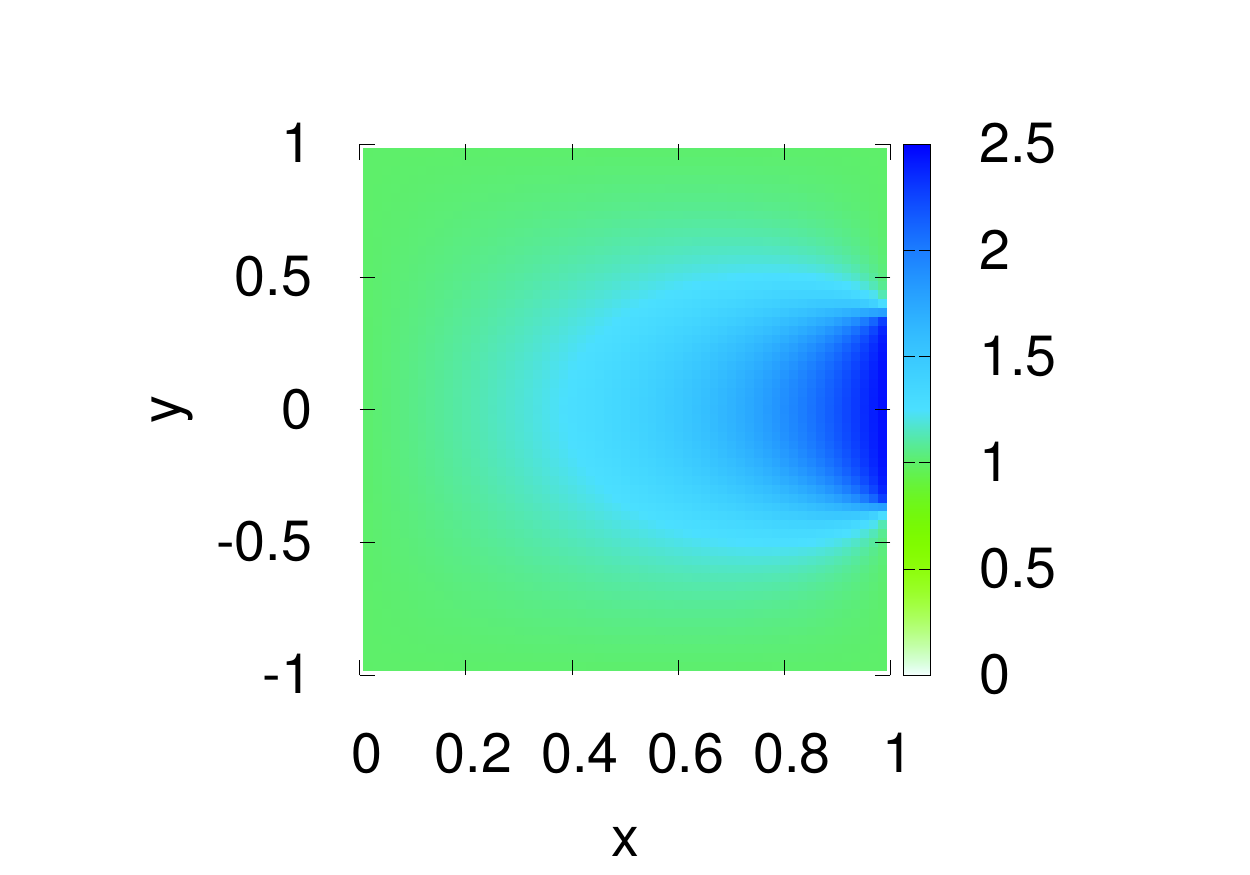} &
	  \includegraphics[width=.33\linewidth]{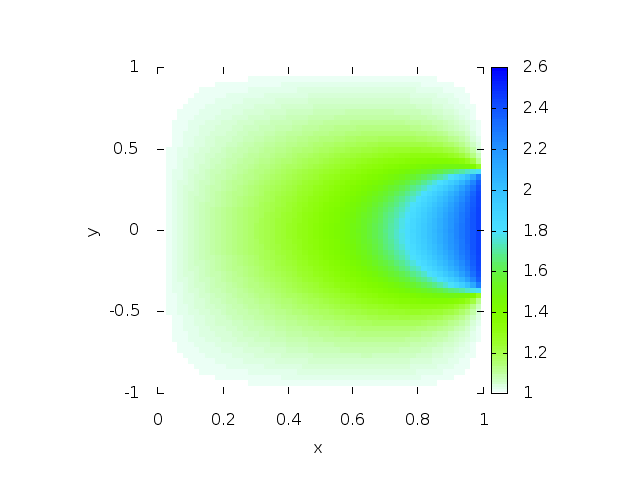} \\
	  $t=0.05$ & $t=0.5$ & $t=\infty$
	\end{tabular}
	\caption{\label{fig:porous:2}{\bf Porous medium equation.}  Time evolution of the distribution at $z=0$ for $N=60^3$. The black line is the isovalue $f(t,x,y,0)\equiv 10^{-16}$.}
      \end{center}
    \end{figure}
    
    \section{Comments and conclusion}
    
    In this paper, we have built a scheme for boundary-driven convection-diffusion equations
    that preserves the relative $\phi$-entropy structure of the model. After proving well-posedness, stability, exponential return to equilibrium and convergence,
    we gave several test cases 
    that confirm the satisfying long time behavior of the numerical scheme in different settings 
    (non-homogeneous Dirichlet/generalized Neumann boundary conditions, explicit and implicit time discretizations, linear and nonlinear models). 
    
    There are several directions that may be investigated for future work. The first objective is to generalize this scheme to anisotropic diffusions on possibly non-orthogonal meshes.
    It requires an adapted discretization of the gradient operator in every direction. 
    There are several papers \cite{eymard_2004_finite, eymard_2006_cell, eymard_2010_discretization, eymard_2012_small, cances_2016_convergence} of Eymard, Herbin, Gallouët, and Guichard and Cancès that are dealing with this type of problem. Their techniques are based on hybrid finite volume schemes for which the
    discrete gradient relies on the use of auxiliary unknowns located on edges between
    control volumes. However it is still unclear for the authors whether one can get the equivalent of the monotony properties of the 2-point flux used here, and that allows us to get the whole class of $\phi$-entropy inequalities.
    
    The spirit of our scheme is to start from a consistent discretization of the steady state and build the transient scheme upon the latter to ensure a satisfying behavior in the long-time asymptotic. We would like to adapt this kind of strategy to other types of numerical scheme (Discontinuous Galerkin, Finite elements, \emph{etc.}) and to different models. 
    
    Finally, as we saw in the introduction, many kinetic models (depending on space and velocity variables) such 
    the Vlasov-Fokker-Planck equation or the full dumbbell model for polymers write as the sum of a transport in space and a (convection)-diffusion in velocity. The second part of these models is treated in the present paper.
    While the diffusion operator is not coercive in all the variables, thanks to the phase space mixing properties of the transport operator, this still leads to an 
    entropy-diminishing behavior and a trend to a global equilibrium. This property is called hypocoercivity \cite{Villani_hypo} and its preservation by numerical schemes has never been studied to our knowledge and this would be another interesting extension of this work.

    \medskip
    \noindent \textsc{Acknowledgements.} The second author would like to thank Thierry Dumont for his kind help on sparse matrix routines. Both authors would like to acknowledge the anonymous reviewer for many constructive remarks that helped improving the quality of this paper.
    
    
    \bibliographystyle{plain}
    \bibliography{bibli}
  \end{document}